\documentclass[11pt,letterpaper]{article}

\usepackage[normalem]{ulem}
\usepackage{algpseudocode}
\usepackage{algorithm}
\usepackage{arxiv}
\usepackage{subcaption}
\usepackage{siunitx}
\usepackage{threeparttable}
\usepackage{array}
\usepackage{tabularx}
\usepackage{xcolor}
\usepackage[normalem]{ulem} 
\usepackage{booktabs,tikz,caption}
\definecolor{lbboxc}{RGB}{174,214,241}
\definecolor{mbboxc}{RGB}{79,151,214}
\definecolor{dbboxc}{RGB}{44,79,140}
\definecolor{orboxc}{RGB}{224,123,57}
\tikzset{
  bxp/.style={line width=0.6pt},
  medline/.style={line width=1.1pt},
  boxfill/.style={fill=black!12},
  outl/.style={line width=0.5pt},
  axisline/.style={line width=0.3pt, black!45},
  tick/.style={font=\tiny, inner sep=0pt},
  lbluefill/.style={fill=lbboxc},
  mbluefill/.style={fill=mbboxc},
  dbluefill/.style={fill=dbboxc},
  orangefill/.style={fill=orboxc},
  leg/.style={font=\scriptsize, inner sep=0pt},
}
\usepackage[utf8]{inputenc}
\usepackage[T1]{fontenc}
\usepackage{hyperref}
\usepackage{url}
\usepackage{booktabs}
\usepackage{amsfonts}
\usepackage{nicefrac}
\usepackage{amsmath, amsthm, amssymb, bm, mathtools}
\usepackage{graphicx}

\newtheorem{proposition}{Proposition}[section]

\newtheorem{lemma}{Lemma}[section]

\newtheorem{assumption}{Assumption}[section]

\newcommand{\R}{\mathbb{R}}
\newcommand{\C}{\mathbb{C}}
\newcommand{\Gam}{\Gamma}
\newcommand{\GamN}{\Gamma_N}
\newcommand{\PiN}{\Pi_{\Gamma_N}}
\newcommand{\rhoN}{\rho_{\theta,N}}

\newcommand{\rtN}{r_{\theta,N}}
\newcommand{\rstar}{\rho^\star}              
\newcommand{\rNstar}{\rho^\star_N}           
\newcommand{\rNlstar}{\rho^\star_{N_\ell}}   
\newcommand{\tstar}{\theta^\star}            
\newcommand{\rhotstar}{\rho_{\theta^\star}}  

\newenvironment{revision}{\begingroup\color{blue}}{\endgroup}

\title{A multilevel stochastic-gradient neural solver for boundary integral equations}
\author{
    Bing-Ze Lu \thanks{Department of Mathematics, National Chung Cheng University, Minhsiung, Chiayi 100190, Taiwan. \texttt{bingzelu.math@gmail.com}}
    \quad and \quad
    Richard Tsai \thanks{Department of Mathematics and Oden Institute for Computational Engineering and Sciences, The University of Texas at Austin, Austin, Texas 78712, U.S.A. \texttt{ytsai@math.utexas.edu}}
}
\date{}

\renewcommand{\headeright}{}
\renewcommand{\undertitle}{}
\renewcommand{\shorttitle}{Neural BIE solver}

\hypersetup{
pdftitle={A multilevel stochastic-gradient neural solver for second-kind boundary integral equations},
pdfauthor={Bing-Ze Lu, Richard Tsai},
pdfkeywords={Boundary integral equations, implicit boundary integral method,
             multilevel training, neural network solvers, stochastic optimization,
             neural tangent kernel, Helmholtz equation, Nystr\"om discretization},
}

\begin{document}
\maketitle
\setcounter{footnote}{0}

\begin{abstract}
We propose a multilevel stochastic-gradient neural solver (MLSG) for second-kind boundary integral equations. MLSG represents the unknown boundary density using a neural network optimized by stochastic residual minimization over a hierarchy of successively refined Nyström discretizations. Upon transitioning from one level to the next, the network parameters obtained on the previous level initialize training on the current one. This coarse-to-fine strategy retains a continuous, grid-independent density representation and is designed to reduce the total computational effort required to reach a prescribed residual tolerance at the target resolution. The algorithm avoids grid-transfer operators and hierarchical fast-summation machinery, relying instead on batched kernel evaluations and standard network forward and backward passes that map efficiently onto modern GPU architectures. For uniformly stable second-kind discretizations, 
strongly nonuniform contraction rates originate in the empirical neural tangent kernel (NTK) rather than in the discretized operator. Within each level, parameter updates can reshape the NTK, while refinement re-samples the tangent kernel on a richer discrete space and reveals directions that were not adequately resolved on coarser levels. A cross-level estimate bounds the warm-start loss in terms of the preceding training tolerance and quadrature error, motivating a tolerance schedule that balances optimization and discretization errors. Experiments on Laplace/Poisson and Helmholtz problems in two and three dimensions, together with an exterior Robin problem on a hypersurface in $\R^4$, demonstrate the method under both parametric and signed-distance surface representations at up to million-scale discretizations. Under the tested hardware and software configuration, MLSG running on a single NVIDIA H200 GPU achieved substantially shorter wall-clock times than a dense-GMRES reference running on 64 cores of Intel Xeon Platinum 8480+ processors. In the schedule-comparison experiments, level revisiting also markedly reduced the run-to-run variability in the number of epochs required for convergence.
\end{abstract}

\keywords{Boundary integral equations, implicit boundary integral method,
          multilevel training, neural network solvers, stochastic optimization,
          neural tangent kernel}

\section{Introduction}

Boundary integral equations (BIEs) replace an elliptic boundary-value
problem in a domain by an integral equation posed on its boundary.  Besides
reducing the geometric dimension by one, a second-kind formulation has a
particularly useful numerical property: after a stable discretization, its
conditioning remains essentially independent of the quadrature resolution.
Large-scale discretizations nevertheless produce dense systems whose efficient
solution typically relies on specialized fast-summation or hierarchical
techniques.  This paper investigates a different approach: whether a simple
neural residual solver, organized around stochastic row sampling and a hierarchy
of increasingly fine Nystr\"om discretizations, can exploit modern GPU hardware
effectively, and how its coarse-to-fine optimization interacts with the
resolution-independent conditioning of second-kind BIEs and the spectral bias,
or frequency principle, of gradient-based neural training~\cite{rahaman2019spectral,xu2020frequency}.

Let $\Omega \subset \R^d$, $d \geq 2$, be a bounded domain with $C^2$
boundary $\Gam=\partial\Omega$.  We consider Fredholm integral equations of
the second kind
\begin{equation}\label{eq:BIE}
    \mathcal{A} \rho := \tfrac12\rho+\mathcal{K}\rho=g,
    \qquad
    (\mathcal{K}\rho)(x):=\int_\Gam k(x,y)\rho(y)\,dS(y),
    \qquad x\in\Gam,
\end{equation}
where $\rho:\Gam\to\C$ is the unknown density, $g$ is prescribed
boundary data, assumed sufficiently regular for the $L^2$ residual and
quadrature used below, and $\mathcal{K}$ is compact on the function space
under consideration.  Such equations arise from elliptic problems
posed on $\Omega$ and on its exterior~\cite{kress2014linear,colton2013integral}.
Once $\rho$ has been found, the PDE solution is recovered away from the
boundary through a layer potential
\begin{equation}\label{eq:representation}
    u(x)=\int_\Gam \widetilde{k}(x,y)\rho(y)\,dS(y),
    \qquad x\in\R^d\setminus\Gam.
\end{equation}

The discretized system of linear equations
$$
    A_N\rho_N=g_N,
    \qquad
    A_N=\tfrac12 I+K_N\in\C^{N\times N},
$$
is typically solved by an iterative method.  For the stable Nyström
discretizations considered here, the conditioning of $A_N$ remains uniformly
bounded under refinement.  The matrix $A_N$ is nevertheless dense, so a
matrix-vector product costs $\mathcal{O}(N^2)$ without compression.
Fast multipole and hierarchical-matrix techniques reduce this cost to nearly
linear complexity in many standard settings
~\cite{greengard1987fmm,cheng1999fast,hackbusch1999}
and provide the asymptotic benchmark for large-scale BIE solution.

The proposed \emph{multilevel stochastic-gradient neural solver} (MLSG) follows a different design built from three ingredients: a neural representation of the boundary density, stochastic mini-batch residual minimization, and a hierarchy of successively refined Nyström discretizations. Since the neural network represents a continuous surface function independently of the quadrature, refinement merely alters the discretization used to evaluate the residual, allowing training to continue on the finer discretization with the same network. Moreover, because each stochastic update uses only a mini-batch of collocation equations, only the corresponding rows of the dense Nyström operator are evaluated on demand, enabling efficient parallel execution on GPUs. The discretization hierarchy thereby becomes an optimization mechanism rather than merely a device for resolving the final BIE. This sacrifices the asymptotic advantage of fast summation in exchange for a simple and regular computational pattern. The numerical question is therefore not whether MLSG improves the asymptotic complexity of a BIE solve (it does not), but whether this simpler pattern can be practically effective at large scale.

Two strands of neural boundary-integral methods are particularly relevant here. For single boundary-value problems, BINet~\cite{lin2023binet} represents the boundary unknown by a neural network and analyzes convergence in the infinite-width NTK regime, while BINN~\cite{sun2023binn} minimizes BIE residuals for potential and elasticity problems; related boundary-only solvers include~\cite{qu2024binnacoustic,zhang2024binnmechanics,ma2025blnn}. Most numerical studies in this line have focused on two space dimensions, although a recent deep-learning-enhanced boundary element formulation treats three-dimensional elasticity~\cite{gu2026deepbem}. Separately, boundary-integral structure has been incorporated into operator learning, both from boundary data alone~\cite{fang2024boundaries} and over families of two-dimensional domains~\cite{meng2025operatorbie}.

To understand the coarse-to-fine optimization mechanism, we examine the induced density and residual dynamics and isolate the role of the neural tangent kernel (NTK).  For the parameter gradient flow the sampled density satisfies
\[
  \dot\rho_{\theta,N}
  =-T_{\theta,N}A_N^\ast(A_N\rho_{\theta,N}-g_N),
\]
so the residual is driven by $A_NT_{\theta,N}A_N^\ast$.
We prove that the
eigenvalues of this operator are bounded above and below by the eigenvalues
of $T_{\theta,N}$ times resolution-independent constants.  
The BIE therefore
changes the instantaneous NTK contraction rates only by uniformly bounded
factors; the strongly nonuniform part of the dynamics comes from the
network parametrization.

This mechanism also clarifies the multilevel analogy. MLSG is closer to cascadic or nested iteration than to a classical V-cycle~\cite{hackbusch1985multigrid}. Training first reduces the residual components to which the current tangent space is most responsive; the quadrature is then refined, and the same network is re-evaluated on a richer discrete space. No intergrid transfer operator is required: after refinement, training continues with the same network on the finer discretization. We do not identify the neural optimizer with a classical high-frequency smoother. Instead, different resolutions change which residual components are visible to the evolving NTK, producing the coarse-to-fine spectral progression observed numerically.

The same coarse-to-fine construction also gives a quantitative relation
between optimization and discretization error.  Under explicit regularity,
consistency, and sampling assumptions, we show that a level-$\ell$ iterate
satisfying
\[
  L^{(\ell)}(\theta^{(\ell,\mathrm{end})})\leq\varepsilon_\ell
\]
provides a warm start on the next level satisfying
\[
  L^{(\ell+1)}(\theta^{(\ell,\mathrm{end})})
  \leq C\bigl(\varepsilon_\ell+h_\ell^{2m}\bigr).
\]
Thus there is little benefit, from the standpoint of the next refinement, in
solving a coarse discretization substantially beyond its discretization scale.
This leads to the matched tolerance $\varepsilon_\ell\asymp h_\ell^{2m}$ used
to guide the multilevel schedule.

The numerical experiments examine both this optimization mechanism and the
practical capability of the resulting solver.  NTK spectra and residual
projections show the evolution of the effective tangent directions across
refinement levels, while comparisons of progressive and level-revisiting
schedules show that revisiting can substantially reduce run-to-run variability
in the optimization.  
More broadly, we apply the same basic algorithm to
interior Laplace/Poisson and exterior Helmholtz problems on parametric and
implicitly represented surfaces, including nonconvex and multiply connected geometries.  
For implicit geometries, the implicit boundary integral method (IBIM)~~\cite{KublikTanushevTsai2013,ChenTsai2017} constructs the quadrature directly from a signed-distance function sampled on a Cartesian grid (Appendix~\ref{app:surf-disc}), so no surface mesh is required. The same framework is also applied to an exterior Robin problem for the Laplace equation on a ring-shaped hypersurface in $\R^4$.

The implementation consists primarily of on-demand batched kernel evaluations and standard neural-network forward and backward passes, without hierarchical fast-summation data structures.
On a single NVIDIA H200 GPU it handles discretizations reaching more than one
million unknowns in the three-dimensional Helmholtz tests and approximately
$3.3$ million quadrature points in the four-dimensional Robin example.  In the
four-sphere benchmarks, under the hardware and software configurations tested
here, its wall-clock times are substantially shorter than those of the 64-core
dense-MATLAB-GMRES reference.  These measurements demonstrate the practical
effectiveness of the simple GPU-oriented computational pattern; they are not
hardware-normalized comparisons with optimized GPU Krylov or FMM-accelerated BIE
solvers.

The remainder of the paper is organized as follows.
Section~\ref{sec:prelim} introduces the Nyström discretization, the
least-squares residual, and the induced gradient flows.
Section~\ref{sec:Algs} develops the spectral mechanism and the multilevel
algorithm.  Section~\ref{sec:Examples} presents the numerical experiments,
and Section~\ref{sec:Conclusion} summarizes the conclusions and limitations.

\section{Preliminaries}\label{sec:prelim}

\subsection{Notation}\label{sec:notation}

Throughout, $\Omega \subset \R^d$, {\color{black}$d \geq 2$}, is a bounded domain with
$C^2$ boundary $\Gam = \partial\Omega$, equipped with the surface measure
$dS$. Function spaces over $\Gam$ are written without ambient brackets:
$L^2(\Gam)$, $C(\Gam)$, $H^s(\Gam)$, with the $L^2$ inner product
$\langle u, v\rangle_{L^2(\Gam)} := \int_\Gam u(y)\, v(y)\,dS(y)$.  
\paragraph{Surface and quadrature.}
$\GamN := \{x_1,\ldots,x_N\} \subset \Gam$ denotes a set of $N$ quadrature
points on $\Gam$, with associated positive weights
$\{w_1,\ldots,w_N\}$, $w_j > 0$. The quadrature rule
$Q_N(f) := \sum_{j=1}^N w_j\, f(x_j)$ approximates the surface integral
$\int_\Gam f\,dS$ for continuous integrands $f$. We collect the weights into
the diagonal matrix $W := \mathrm{diag}(w_1,\ldots,w_N) \in \R^{N\times N}$.

\paragraph{Weighted inner product and adjoints on $\R^N$.}
We endow $\R^N$ with the discrete weighted inner product
\begin{equation}\label{eq:weighted-inner}
    \langle u, v\rangle_W := \sum_{j=1}^N w_j\, u_j v_j
    \;=\; v^\top W u, \qquad u, v \in \R^N,
\end{equation}
which is the natural quadrature image of the $L^2(\Gam)$ inner product.
Because the quadrature weights are positive, $\langle\cdot,\cdot\rangle_W$
is positive definite and defines a genuine inner product on $\R^N$. The
adjoint of an operator $T \in \R^{N \times N}$ with respect to
\eqref{eq:weighted-inner} is $T^\ast = W^{-1} T^\top W$.  When a Jacobian
$J\in\R^{N\times p}$ maps Euclidean parameter space into the weighted sample
space, its adjoint is $J^\ast=J^\top W\in\R^{p\times N}$.

\paragraph{Restriction operator.}
The restriction operator
\[
    \PiN \colon C(\Gam) \longrightarrow \R^N,
    \qquad
    \PiN[\rho]_j := \rho(x_j), \quad j=1,\ldots,N,
\]
maps a continuous function on $\Gam$ to its values at the quadrature nodes.

\paragraph{Densities.}
$\rho \colon \Gam \to \R$ denotes a continuous (typically $C^1$) density on
$\Gam$, and $\rho_N := \PiN[\rho] \in \R^N$ its sample on the quadrature
nodes.  The exact continuum solution of \eqref{eq:BIE} is denoted $\rstar$;
its nodal sample is $\PiN[\rstar]$.  The exact solution of the discrete
Nystr\"om system is denoted separately by $\rNstar$ below.

\paragraph{Operators.}
We use calligraphic font for operators acting on function spaces over $\Gam$
and ordinary math font for their finite-dimensional discretizations:
$\mathcal{A}, \mathcal{K}$ are the continuum BIE and integral operators in
\eqref{eq:BIE}; $A_N, K_N$ are the corresponding $N \times N$ matrices.
Adjoints in $L^2(\Gam)$ are denoted $\mathcal{A}^\ast, \mathcal{K}^\ast$.
We reserve the asterisk $\ast$ for adjoints throughout, and the star
$\star$ for exact or optimal quantities (densities and parameters); the
two symbols are kept visually distinct on purpose.

\paragraph{Restriction to real-valued densities.}
The analysis in Sections~\ref{sec:prelim}--\ref{sec:Algs} is written for
real-valued densities and operators.  The Helmholtz examples require
$\rho\colon\Gam\to\C$; we handle them by realification, identifying
$\C\cong\R^2$ and applying the same arguments to the associated
$2N$-dimensional real system.  The discretization, second-kind stability, and
gradient-flow analysis then carry over with $A_N$ replaced by its
realification.

\subsection{The Nystr\"om method}\label{sec:nystrom}

Applying the quadrature rule $Q_N$ to the integral operator $\mathcal{K}$ in
\eqref{eq:BIE} produces the Nystr\"om operator
\begin{equation}\label{eq:Knystrom}
    (K_N \phi)_i \;:=\; \sum_{j=1}^N w_j\, k(x_i, x_j)\,\phi_j,
    \qquad i = 1,\ldots,N,
\end{equation}
acting on $\phi \in \R^N$, and the discrete BIE operator
\begin{equation}\label{eq:Anystrom}
    A_N \;:=\; \tfrac12 I + K_N \;\in\; \R^{N\times N}.
\end{equation}
Setting $g_N = \PiN[g]$, the Nystr\"om discretization of
\eqref{eq:BIE} is the linear system
\begin{equation}\label{eq:linear-system}
    A_N \rNstar \;=\; g_N,
    \qquad \rNstar:=A_N^{-1}g_N,
\end{equation}
whenever $A_N$ is invertible.  In general $\rNstar$ is not identical to the
sample $\PiN[\rstar]$ of the continuum solution; their difference is the
Nystr\"om discretization error estimated below.
For singular kernels, we assume that the chosen quadrature provides a
convergent approximation, including any required near-diagonal
regularization.  We suppress this regularization in the notation and
continue to write the kernel as $k(x,y)$.

\paragraph{Consistency, stability, and spectral convergence.}

The classical collectively-compact Nystr\"om theory of
\cite{anselone1971collectively,atkinson1997numerical} motivates the stability
picture used here.  Under the usual consistency assumptions on the
quadrature (and the corresponding regularization for weakly singular
kernels), $K_N\to\mathcal K$ collectively compactly, and invertibility of
$\mathcal A=\tfrac12\mathcal I+\mathcal K$ yields eventual invertibility of
the discrete operators in the standard Nystr\"om setting.  The gradient-flow
analysis below is formulated in the quadrature-induced $W$-norm, so we state
the required discrete $L^2$ stability separately rather than infer it
formally from sup-norm collectively-compact theory.

\begin{assumption}[Uniform $W$-stability of the Nystr\"om family]
\label{ass:nystrom-W-stability}
For all sufficiently fine quadratures used in the analysis, $A_N$ is
invertible on $(\R^N,\langle\cdot,\cdot\rangle_W)$ and there exist constants
$0<c_A\le C_A<\infty$, independent of $N$, such that
\begin{equation}\label{eq:uniform-A-bounds}
  \|A_N\|_W\le C_A,
  \qquad
  \|A_N^{-1}\|_W\le c_A^{-1}.
\end{equation}
Equivalently, every $W$-singular value of $A_N$ lies in $[c_A,C_A]$.
\end{assumption}

Assumption~\ref{ass:nystrom-W-stability} is the precise conditioning property
used in the remainder of the paper; neither normality of $A_N$ nor an
eigenvalue-clustering hypothesis is required for the gradient-flow
arguments.  It is consistent with the stable second-kind discretizations
used in our experiments and can be checked directly for a chosen quadrature
family.  Standard Nystr\"om spectral-convergence results for isolated
nonzero eigenvalues and spectral projectors remain relevant background, but
are not used as a substitute for the $W$-stability assumption above.

\paragraph{Convergence of the discrete solution.}

Under the corresponding consistency and regularity assumptions, the
Nystr\"om solution satisfies the standard estimate
\begin{equation}\label{eq:nystrom-conv}
    \|\PiN[\rstar] - \rNstar\|_W \;\leq\; C\, h^\nu,
\end{equation}
where $\rstar$ solves \eqref{eq:BIE}, $\rNstar$ solves
\eqref{eq:linear-system}, $h$ is the characteristic quadrature spacing, and
$\nu$ is the convergence order of the chosen quadrature for the regularity
of $\rstar$.

\subsection{Least-squares residual functionals}\label{sec:loss}

Our solver determines an approximate density by minimizing the
$L^2$-residual of the BIE \eqref{eq:BIE}, evaluated either continuously on
$\Gam$ or discretely through the quadrature.

\paragraph{Continuum loss.}
For $\rho \in L^2(\Gam)$ define the continuum residual
\begin{equation}\label{eq:residual-cont}
    r[\rho](x) \;:=\; (\mathcal{A}\rho)(x) - g(x),
    \qquad x \in \Gam,
\end{equation}
and the continuum least-squares functional
\begin{equation}\label{eq:loss-cont}
    \mathcal{L}[\rho;\Gam]
    \;:=\; \tfrac12 \int_\Gam |r[\rho](x)|^2\,dS(x)
    \;=\; \tfrac12 \|\mathcal{A}\rho - g\|_{L^2(\Gam)}^2.
\end{equation}

\paragraph{Discrete loss.}
For $\rho_N \in \R^N$ define the discrete residual
\begin{equation}\label{eq:residual-disc}
    r_N[\rho_N] \;:=\; A_N \rho_N - g_N \;\in\; \R^N,
\end{equation}
and the discrete least-squares functional
\begin{equation}\label{eq:loss-disc}
    L(\rho_N;\GamN) \;:=\; \tfrac12 \sum_{j=1}^N w_j\,|r_N[\rho_N]_j|^2
    \;=\; \tfrac12 \langle r_N[\rho_N],\, r_N[\rho_N]\rangle_W.
\end{equation}
$L(\,\cdot\,;\GamN)$ is the quadrature image of
$\mathcal{L}[\,\cdot\,;\Gam]$ when the input is the restriction of a
continuous density.

\paragraph{Convexity and uniqueness.}
Both functionals are convex quadratic in their respective arguments. The
gradient of $\mathcal{L}[\,\cdot\,;\Gam]$ with respect to $\rho \in
L^2(\Gam)$ is
\begin{equation}\label{eq:grad-cont-rho}
    \nabla_\rho \mathcal{L}[\rho;\Gam]
    \;=\; \mathcal{A}^\ast(\mathcal{A}\rho - g),
\end{equation}
and the gradient of $L(\,\cdot\,;\GamN)$ with respect to $\rho_N \in \R^N$
in the $W$-weighted inner product is
\begin{equation}\label{eq:grad-disc-rho}
    \nabla_{\rho_N} L(\rho_N;\GamN)
    \;=\; A_N^\ast(A_N \rho_N - g_N),
\end{equation}
where $A_N^\ast$ is the $W$-adjoint introduced after
\eqref{eq:weighted-inner}.

The first-order condition for the continuum problem is
$\mathcal A^\ast\mathcal A\rho=\mathcal A^\ast g$.  Bounded invertibility
of $\mathcal A$ gives the coercivity estimate
\[
  \langle \rho,\mathcal A^\ast\mathcal A\rho\rangle_{L^2(\Gam)}
  =\|\mathcal A\rho\|_{L^2(\Gam)}^2
  \ge \|\mathcal A^{-1}\|^{-2}\|\rho\|_{L^2(\Gam)}^2,
\]
so the unique minimizer is $\rstar=\mathcal A^{-1}g$.  Likewise,
\eqref{eq:uniform-A-bounds} implies that, for all sufficiently large $N$,
$A_N^\ast A_N$ is uniformly positive definite in the $W$-inner product and
$L(\cdot;\GamN)$ has the unique minimizer
$\rNstar=A_N^{-1}g_N$ with zero residual.

\paragraph{Network parametrization, parametrized loss, and residual.}
The unknown density on $\Gam$ is represented by a multilayer perceptron
(MLP) $\rho_\theta \colon \Gam \to \R$ with trainable parameters
$\theta \in \R^p$, and we write 
\begin{equation}
 \rhoN := \PiN[\rho_\theta] \in \R^N   
\end{equation}
for its quadrature sample. The concrete architectural choices depend
on how $\Gam$ is described in the application and are deferred to
Section~\ref{sec:Examples}; the analysis of this section applies to
any such parametrization. The loss functionals \eqref{eq:loss-cont} and
\eqref{eq:loss-disc} pull back through the parametrization to functions
of $\theta$,
\begin{equation}\label{eq:loss-theta}
    L(\theta;\Gam) \;:=\; \mathcal{L}[\rho_\theta;\Gam],
    \qquad
    L(\theta;\GamN) \;:=\; L(\rhoN;\GamN),
\end{equation}
and the corresponding residuals are
\begin{equation}\label{eq:res-theta}
    r_\theta \;:=\; \mathcal{A}\rho_\theta - g \;\in\; L^2(\Gam),
    \qquad
    \rtN \;:=\; A_N\,\rhoN - g_N \;\in\; \R^N.
\end{equation}
Although $\mathcal{L}[\,\cdot\,;\Gam]$ and $L(\,\cdot\,;\GamN)$ are
convex in their direct arguments, the composition with the nonlinear
map $\theta \mapsto \rho_\theta$ generally destroys convexity, so
$L(\theta;\Gam)$ and $L(\theta;\GamN)$ are not convex in $\theta$. The
remainder of the section studies the geometry of these landscapes and
the dynamics of the parameter flow on them.

\subsection{Gradient flows}\label{sec:gflow-rho}

We first examine gradient flow in the density itself and then pull that flow
back through the neural parametrization.  The unparametrized density flow
provides a resolution-independent reference, while the parameter-space flow
introduces the neural tangent kernel that governs the anisotropy of the
training dynamics.

\paragraph{Reference gradient flows.}
The $L^2(\Gam)$ gradient flow of $\mathcal{L}[\,\cdot\,;\Gam]$ is
\begin{equation}\label{eq:flow-cont-rho}
    \frac{d}{dt}\rho \;=\; -\,\mathcal{A}^\ast \bigl(\mathcal{A}\rho - g\bigr),
\end{equation}
and the $W$-weighted gradient flow of $L(\,\cdot\,;\GamN)$ on $\R^N$ is
\begin{equation}\label{eq:flow-disc-rho}
    \frac{d}{dt}\rho_N \;=\; -\,A_N^\ast \bigl(A_N \rho_N - g_N\bigr).
\end{equation}
Bounded invertibility of $\mathcal A$ and the uniform discrete bounds
\eqref{eq:uniform-A-bounds} imply
\[
  c_A^2 I \preceq_W A_N^\ast A_N \preceq_W C_A^2 I.
\]
Consequently the unparametrized discrete flow contracts every density-error
direction at a rate between two positive constants independent of $N$; the
continuum statement is analogous.  This resolution-independent reference
rate is the baseline against which the parametrized flow is compared.

\paragraph{Parameter-space flow.}
The gradient flow on the parameters reads
\begin{equation}\label{eq:flow-theta}
    \frac{d}{dt}\theta \;=\; -\,\nabla_\theta L(\theta;\Gam)
    \quad\text{(continuum)},
    \qquad
    \frac{d}{dt}\theta \;=\; -\,\nabla_\theta L(\theta;\GamN)
    \quad\text{(discrete)}.
\end{equation}
The continuum and discrete parameter flows are parallel.  We derive the
continuum form first, because it exposes the operator structure most
transparently, and then state the corresponding discrete formulas used in
the computation.

Applying the chain rule componentwise to $L(\theta;\Gam) =
\mathcal{L}[\rho_\theta;\Gam]$ writes each parameter-gradient component
as an $L^2(\Gam)$ inner product of the density-side residual gradient
$\mathcal{A}^\ast r_\theta$ with the network's parameter sensitivity,
\begin{equation}\label{eq:grad-cont-theta}
    \bigl(\nabla_\theta L(\theta;\Gam)\bigr)_k
    \;=\;
    \Bigl\langle \mathcal{A}^\ast r_\theta,\;
                 \frac{\partial \rho_\theta}{\partial \theta_k}
    \Bigr\rangle_{L^2(\Gam)},
    \qquad k = 1,\ldots,p.
\end{equation}
The {{sensitivity vectors}}
$\{\partial \rho_\theta / \partial \theta_k\}_{k=1}^p$ span the
\emph{neural tangent space} at $\theta$ in $L^2(\Gam)$.

Pulling \eqref{eq:flow-theta} back to the density via the chain rule
gives the induced dynamics for $\rho_\theta$:
\begin{equation}\label{eq:rho-flow-cont}
    \frac{d}{dt}\rho_\theta(x)
    \;=\; \sum_{k=1}^p \partial_{\theta_k}\rho_\theta(x)\,\frac{d}{dt}\theta_k
    \;=\; - \int_\Gam T_\theta(x,y)\,
            \mathcal{A}^\ast \bigl(\mathcal{A}\rho_\theta - g\bigr)(y)\,dS(y),
\end{equation}
where 
\begin{equation}\label{eq:NTK-kernel}
    T_\theta(x,y)
    \;:=\;
    \bigl\langle \nabla_\theta \rho_\theta(x),\,
                 \nabla_\theta \rho_\theta(y)\bigr\rangle_{\R^p}
    \;=\;
    \sum_{k=1}^p \partial_{\theta_k}\rho_\theta(x)\,
                 \partial_{\theta_k}\rho_\theta(y).
\end{equation}
Following~\cite{jacot2018neural}, we call $T_\theta$ the neural tangent
kernel.  Here the network has finite width, so $T_\theta$ evolves with
$\theta$ rather than remaining frozen at its initialization value.  This
evolution is one of the mechanisms examined by the multilevel analysis in
Section~\ref{sec:Algs}.

The corresponding integral operator on $L^2(\Gam)$ is
\begin{equation}\label{eq:NTK-operator}
    (\mathcal{T}_\theta u)(x)
    \;:=\; \int_\Gam T_\theta(x,y)\,u(y)\,dS(y).
\end{equation}
Substituting \eqref{eq:NTK-kernel} into the right-hand side and
exchanging the integral with the finite sum gives the factorization
\begin{equation}\label{eq:NTK-finite-rank}
    (\mathcal{T}_\theta u)(x)
    \;=\; \sum_{k=1}^{p}
        \bigl\langle \partial_{\theta_k}\rho_\theta,\, u
        \bigr\rangle_{L^2(\Gam)}\,
        \partial_{\theta_k}\rho_\theta(x),
    \qquad u \in L^2(\Gam),
\end{equation}
from which several properties of $\mathcal{T}_\theta$ follow directly.
The range of $\mathcal{T}_\theta$ lies in
$\mathrm{span}\{\partial_{\theta_k}\rho_\theta\}_{k=1}^{p}$, a subspace
of dimension at most $p$. Cauchy--Schwarz on each inner product gives
$\|\mathcal{T}_\theta u\|_{L^2(\Gam)}
 \leq \bigl(\sum_{k=1}^{p}\|\partial_{\theta_k}\rho_\theta\|_{L^2(\Gam)}^{2}\bigr)\,
      \|u\|_{L^2(\Gam)}$,
so $\mathcal{T}_\theta$ is bounded. Pairing \eqref{eq:NTK-finite-rank}
against $v \in L^2(\Gam)$,
\[
    \bigl\langle \mathcal{T}_\theta u, v\bigr\rangle_{L^2(\Gam)}
    \;=\; \sum_{k=1}^{p}
        \bigl\langle \partial_{\theta_k}\rho_\theta,\, u
        \bigr\rangle_{L^2(\Gam)}
        \bigl\langle \partial_{\theta_k}\rho_\theta,\, v
        \bigr\rangle_{L^2(\Gam)},
\]
which is symmetric in $(u,v)$ and reduces to
$\sum_{k=1}^{p}\langle \partial_{\theta_k}\rho_\theta, u\rangle_{L^2(\Gam)}^{\,2} \geq 0$
when $v = u$, so $\mathcal{T}_\theta$ is self-adjoint and positive
semi-definite. Bounded subsets of the finite-dimensional range are
precompact, hence $\mathcal{T}_\theta$ is compact on $L^2(\Gam)$. We
summarize:

\begin{proposition}[Properties of the continuum NTK operator]
\label{prop:ntk-compact}
Let $\rho_\theta \colon \Gam \to \R$ be a network parametrization with
$p$ trainable parameters whose sensitivities
$\partial_{\theta_k}\rho_\theta$ are continuous on $\Gam$. Then the
NTK integral operator $\mathcal{T}_\theta \colon L^2(\Gam) \to
L^2(\Gam)$ defined by \eqref{eq:NTK-operator} is bounded,
self-adjoint, positive semi-definite, and of rank at most $p$. In
particular, $\mathcal{T}_\theta$ is compact on $L^2(\Gam)$.
\end{proposition}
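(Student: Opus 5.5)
The plan is to work entirely from the finite-rank factorization \eqref{eq:NTK-finite-rank}, writing $\phi_k:=\partial_{\theta_k}\rho_\theta$ for $k=1,\ldots,p$. The first step is to check that $\mathcal{T}_\theta$ is well defined on all of $L^2(\Gam)$. Since $\Gam$ is compact and each $\phi_k$ is continuous, every $\phi_k$ is bounded and hence lies in $L^2(\Gam)$. The kernel $T_\theta(x,y)=\sum_k\phi_k(x)\phi_k(y)$ is therefore continuous and bounded on $\Gam\times\Gam$. So for $u\in L^2(\Gam)$ the integral in \eqref{eq:NTK-operator} converges absolutely, by Cauchy--Schwarz on the finite-measure set $\Gam$. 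Exchanging the finite sum with the integral is then just linearity of the integral, and it gives $\mathcal{T}_\theta u=\sum_k\langle\phi_k,u\rangle\phi_k$.

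The properties then follow in order.
\begin{itemize}
\item \emph{Boundedness.} By the triangle inequality and Cauchy--Schwarz,
\[
\|\mathcal{T}_\theta u\|\le\sum_k|\langle\phi_k,u\rangle|\,\|\phi_k\|\le\Bigl(\sum_k\|\phi_k\|^2\Bigr)\|u\|.
\]
\item \emph{Self-adjointness.} Pairing with $v$ gives $\langle\mathcal{T}_\theta u,v\rangle=\sum_k\langle\phi_k,u\rangle\langle\phi_k,v\rangle$. This expression is symmetric in $(u,v)$, and since the operator is bounded and real, symmetry is the same as self-adjointness.
\item \emph{Positive semi-definiteness.} Setting $v=u$ gives $\langle\mathcal{T}_\theta u,u\rangle=\sum_k\langle\phi_k,u\rangle^2\ge0$.
\item \emph{Rank.} The range lies in $\mathrm{span}\{\phi_1,\ldots,\phi_p\}$, whose dimension is at most $p$.
\end{itemize}

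Compactness follows from the rank bound. A bounded operator with finite-dimensional range maps bounded sets to bounded subsets of a finite-dimensional normed space, and by Heine--Borel such subsets are precompact. Equivalently, $\mathcal{T}_\theta$ is a finite sum of the rank-one bounded operators $u\mapsto\langle\phi_k,u\rangle\phi_k$.

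None of these steps is a real obstacle. The only point that needs care is the preliminary justification that each $\phi_k$ lies in $L^2(\Gam)$ and that the kernel integral is finite. Both rest on the continuity hypothesis in the statement together with the finite surface measure of the compact $C^2$ boundary.
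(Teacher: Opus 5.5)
Your proposal is correct and takes essentially the same route as the paper: both start from the finite-rank factorization $\mathcal{T}_\theta u=\sum_{k=1}^{p}\langle\partial_{\theta_k}\rho_\theta,u\rangle_{L^2(\Gam)}\,\partial_{\theta_k}\rho_\theta$ and read off boundedness via Cauchy--Schwarz, self-adjointness and positive semi-definiteness by pairing with $v$ and then setting $v=u$, rank at most $p$ from the span, and compactness from the finite-dimensional range. Your preliminary check that each sensitivity lies in $L^2(\Gam)$, using continuity on the compact boundary, is a small step the paper leaves implicit.
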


The same calculation on $\GamN$ replaces the $L^2(\Gam)$ pairing by the
$W$-pairing and the operator $\mathcal{A}^\ast$ by its $W$-adjoint
$A_N^\ast$:
\begin{equation}\label{eq:grad-disc-theta}
    \bigl(\nabla_\theta L(\theta;\GamN)\bigr)_k
    \;=\;
    \Bigl\langle A_N^\ast \rtN,\;
                 \frac{\partial \rhoN}{\partial \theta_k}
    \Bigr\rangle_W,
    \qquad k = 1,\ldots,p,
\end{equation}
or equivalently 
\begin{equation}
 \nabla_\theta L(\theta;\GamN) = J_{\theta,N}^\ast\,
A_N^\ast \rtN,   
\end{equation}
where $J_{\theta,N}$ is the Jacobian
\begin{equation}
 J_{\theta,N} \in \R^{N\times p},\qquad
 (J_{\theta,N})_{i,k} := \partial_{\theta_k}\rho_\theta(x_i),
 \qquad J_{\theta,N}^\ast=J_{\theta,N}^\top W.
\end{equation}

The induced flow on the sampled density is
\begin{equation}\label{eq:rho-flow-disc}
    \frac{d}{dt}\rhoN
    \;=\; J_{\theta,N}\,\frac{d}{dt}\theta
    \;=\; -\,T_{\theta,N}\,A_N^\ast(A_N\,\rhoN - g_N),
\end{equation}
with the \emph{empirical neural tangent kernel} on $\GamN$
\begin{equation}\label{eq:eNTK}
    T_{\theta,N} \;:=\; J_{\theta,N}\, J_{\theta,N}^\ast \;\in\; \R^{N\times N}.
\end{equation}
Writing $G_N$ for the Gram matrix $(G_N)_{ij} = T_\theta(x_i,x_j) = \langle \nabla_\theta\rho_\theta(x_i), \nabla_\theta\rho_\theta(x_j)\rangle_{\R^p}$, {\color{black}{the empirical NTK factors as
\begin{equation}\label{eq:ntk-nystrom}
    T_{\theta,N} \;=\; G_N\,W,
    \qquad
    (T_{\theta,N}\,u)_i \;=\; \sum_{j=1}^{N} w_j\,T_\theta(x_i,x_j)\,u_j;
\end{equation}
that is, $T_{\theta,N}$ is the Nystr\"om discretization of
$\mathcal{T}_\theta$ on $\GamN$, produced from the kernel $T_\theta$ by
the same quadrature rule that produces $A_N$ from $\mathcal{A}$ in
Section~\ref{sec:nystrom}. The weight factor distinguishes it from the
symmetric Gram matrix $G_N$.}} The same argument as in the
proof of Proposition~\ref{prop:ntk-compact}, applied to the
factorization $T_{\theta,N} = J_{\theta,N} J_{\theta,N}^\ast$, shows
that $T_{\theta,N}$ is $W$-self-adjoint, positive semi-definite on
$(\R^N, \langle\cdot,\cdot\rangle_W)$, and of rank at most $\min(p,N)$.
It is rank-deficient whenever $p < N$, the regime of interest at the
finer levels.

\paragraph{Comparison with the un-parametrized flow.}
The reference flows~\eqref{eq:flow-cont-rho}
and~\eqref{eq:flow-disc-rho} correspond formally to
$T_\theta(x,y)=\delta(x-y)$ (equivalently $\mathcal T_\theta=\mathcal I$
and $T_{\theta,N}=I$).  Because $A_N^\ast A_N$ is uniformly well
conditioned in $N$ (Section~\ref{sec:nystrom}), every density-error
direction then contracts at a rate bounded above and below independently
of the resolution.  For a neural parametrization,
$\mathcal T_\theta$ has rank at most $p$, while $T_{\theta,N}$ has rank
at most $\min(p,N)$ and is rank-deficient whenever $p<N$.  Small or zero
NTK eigenvalues therefore create slow or invisible tangent directions in
the induced flow.  Section~\ref{sec:Algs} develops this observation as the
basis for the multilevel strategy.

\paragraph{Residual dynamics.}
Applying $A_N$ to \eqref{eq:rho-flow-disc} yields the residual flow
\begin{equation}\label{eq:r-flow-disc}
    \frac{d}{dt}\rtN \;=\; -\,A_N\,T_{\theta,N}\,A_N^\ast\, \rtN.
\end{equation}
The driving operator
$A_N\,T_{\theta,N}\,A_N^\ast$ is $W$-self-adjoint and positive
semi-definite. 

Equation~\eqref{eq:r-flow-disc} makes the source of nonuniform contraction
explicit: the empirical NTK appears inside the residual-flow operator
$A_NT_{\theta,N}A_N^\ast$.  Section~\ref{sec:Algs} quantifies this
dependence spectrally.

Training terminates at finite time, so the residual must also serve as an
a posteriori indicator of density error.  Assumption~\ref{ass:nystrom-W-stability}
provides the required uniform inverse bound.

\begin{lemma}[Residual controls error]\label{lem:res-to-err}
Let $N$ be large enough that $A_N$ is invertible. For every
$\rho_N \in \R^N$,
\begin{equation}\label{eq:res-to-err-disc}
    \|\rho_N - \rNstar\|_W
    \;\leq\;
    \|A_N^{-1}\|_W\,
    \|A_N \rho_N - g_N\|_W,
\end{equation}
and Assumption~\ref{ass:nystrom-W-stability} bounds
$\|A_N^{-1}\|_W$ uniformly in $N$.  In particular, for the network sample
$\rhoN=\PiN[\rho_\theta]$, the observable residual $\|\rtN\|_W$
controls the distance to the discrete solution.
\end{lemma}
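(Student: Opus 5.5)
The argument rests on one identity: the discrete solution has zero residual. Since $A_N$ is invertible, the definition \eqref{eq:linear-system} gives $\rNstar=A_N^{-1}g_N$, so $A_N\rNstar=g_N$. Hence for any $\rho_N\in\R^N$ the error and the residual are linked by
\[
  \rho_N-\rNstar \;=\; A_N^{-1}\bigl(A_N\rho_N - A_N\rNstar\bigr)
  \;=\; A_N^{-1}\bigl(A_N\rho_N-g_N\bigr).
\]

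We take $W$-norms on both sides and use the definition of the induced operator norm $\|\cdot\|_W$ on $(\R^N,\langle\cdot,\cdot\rangle_W)$. This gives $\|\rho_N-\rNstar\|_W\le\|A_N^{-1}\|_W\,\|A_N\rho_N-g_N\|_W$, which is \eqref{eq:res-to-err-disc}.

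The remaining two assertions follow by substitution.

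\emph{Uniformity in $N$.} Assumption~\ref{ass:nystrom-W-stability} states $\|A_N^{-1}\|_W\le c_A^{-1}$ with $c_A$ independent of $N$. The bound therefore becomes $\|\rho_N-\rNstar\|_W\le c_A^{-1}\|A_N\rho_N-g_N\|_W$ for all sufficiently fine quadratures.

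\emph{Network sample.} Setting $\rho_N=\rhoN=\PiN[\rho_\theta]$ and recalling $\rtN=A_N\rhoN-g_N$ from \eqref{eq:res-theta}, we obtain $\|\rhoN-\rNstar\|_W\le c_A^{-1}\|\rtN\|_W$. So the observable residual controls the distance to the discrete solution.

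There is no real obstacle in this argument. The only point requiring care is that the norm on $A_N^{-1}$ must be the operator norm induced by the weighted inner product \eqref{eq:weighted-inner}, not the Euclidean one. This matters because Assumption~\ref{ass:nystrom-W-stability} is stated for exactly that norm, and the inequality is then consistent with it.
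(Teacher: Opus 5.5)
Your proposal is correct and matches the argument the paper relies on; the lemma is stated without a separate proof because it follows from the zero-residual identity $\rho_N-\rNstar=A_N^{-1}(A_N\rho_N-g_N)$, the definition of the $W$-induced operator norm, and Assumption~\ref{ass:nystrom-W-stability}, which gives $\|A_N^{-1}\|_W\le c_A^{-1}$ uniformly in $N$. You are right to insist on the $W$-induced norm, since that is the norm in which the stability assumption is posed.
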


\section{The multilevel stochastic-gradient neural solver}\label{sec:Algs}

We develop the MLSG solver by minimizing the discrete residual across a hierarchy of quadrature nodes with sizes $N_1<N_2<\cdots<N_{\ell_{\text F}}$, while transferring the network parameters from one training stage to the next. Moreover, the training procedure is flexible and can re-use quadrature points. Accordingly, a \emph{progressive} schedule traverses the hierarchy once from coarse to fine, whereas a \emph{cyclic} schedule returns to selected coarser levels before performing the final refinement.

Sections~\ref{sec:Algs:motivation}--\ref{sec:Algs:expansion} explain why MLSG strategy can accelerate the solution of the boundary integral equations considered here. First, the dense BIE evaluations at each level are mini-batched, and each batch is a dense matrix product of exactly the shape GPU execution favors. Second, the scheme is cascadic rather than multigrid with smoothing, so each level only refines the initial guess inherited from the coarser level and no relaxation sweeps are needed. Third, re-sampling changes the geometry of the neural tangent kernel, which helps training escape the plateau reached at the end of the previous resolution.
Sections 3.4 and 3.5 define the multilevel training algorithms and quantify the residual transferred between grid levels to balance the optimization and discretization errors.

\subsection{Computational accounting}\label{sec:Algs:motivation}

\emph{(i) Dense row operations.}  A residual evaluation for a mini-batch
uses dense dot products against selected rows of $A_N$, followed by a
network forward/backward pass.  These operations are regular and highly
parallel, although their wall-clock behavior depends on memory traffic,
kernel generation, and the network architecture.

\emph{(ii) Mini-batches change the per-update arithmetic.}  If a batch
contains $b$ collocation rows, a stochastic update requires
$\mathcal O(bN)$ kernel interactions rather than the $\mathcal O(N^2)$
interactions of a full residual evaluation.  This does not change the
worst-case complexity of the dense BIE itself; it changes how much of the
operator is touched by each optimization step.

\emph{(iii) The represented object is independent of the grid.}  The same
function $\rho_\theta$ can be evaluated on every quadrature in the ladder.
Thus interlevel transfer is the identity on $\theta$; no explicit
restriction or prolongation of a grid function is required.

\emph{(iv) Warm starts move work away from the finest grid.}  A refined
stage begins from a network that has already reduced the residual visible on
a coarser quadrature.  The analysis in Section~\ref{sec:Algs:analysis}
quantifies the size of the residual passed to the next level, while the
experiments measure how much optimization work remains after the transfer.

\subsection{Spectral bias and the effective neural tangent space}\label{sec:Algs:mg-analogy}

A multilevel hierarchy can help when different resolutions change the optimization problem seen by the network, thereby alleviating training bottlenecks. Classical geometric multigrid achieves this via smoothing sweeps and coarse-grid correction, while MLSG operates through a different spectral mechanism: grid refinement re-samples the residual and redefines the empirical NTK $T_{\theta,N}$ at a fixed parameter set $\theta$,  thereby revealing previously unresolved tangent directions. Consequently, while MLSG shares the concept of spectral separation with multigrid, it lacks explicit intergrid correction sweeps. Algorithmically, MLSG is therefore closer to a cascadic or nested iteration solver.

The residual flow is
\begin{equation}\label{eq:r-flow-spectral-repeat}
  \dot r_{\theta,N}=-B_{\theta,N}r_{\theta,N},
  \qquad
  B_{\theta,N}:=A_NT_{\theta,N}A_N^\ast.
\end{equation}
The following estimate makes precise the statement that a well-conditioned
second-kind BIE changes the NTK spectrum only by bounded factors.

\begin{proposition}[Spectral equivalence of the residual-flow and NTK operators]
\label{prop:bie-ntk-spectral-equivalence}
Let $A_N$ be invertible on $(\R^N,\langle\cdot,\cdot\rangle_W)$ and let
$T_{\theta,N}$ be $W$-self-adjoint and positive semidefinite.  Write
\[
  \alpha_N:=\|A_N^{-1}\|_W^{-1},\qquad
  \beta_N:=\|A_N\|_W,
\]
and order the eigenvalues of $T_{\theta,N}$ and $B_{\theta,N}$ as
$\sigma_{1,N}\ge\cdots\ge\sigma_{N,N}\ge0$ and
$\mu_{1,N}\ge\cdots\ge\mu_{N,N}\ge0$, respectively.  Then
\begin{equation}\label{eq:bie-ntk-spectral-equivalence}
  \alpha_N^2\sigma_{j,N}
  \le \mu_{j,N}
  \le \beta_N^2\sigma_{j,N},
  \qquad j=1,\ldots,N.
\end{equation}
In particular, under \eqref{eq:uniform-A-bounds}, the equivalence constants
are independent of the quadrature resolution.
\end{proposition}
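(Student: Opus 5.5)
The plan is to use the Courant--Fischer min-max characterization in the Hilbert space $(\R^N,\langle\cdot,\cdot\rangle_W)$, where both $T_{\theta,N}$ and $B_{\theta,N}=A_NT_{\theta,N}A_N^\ast$ are self-adjoint and positive semidefinite. First I would check that $B_{\theta,N}$ is indeed $W$-self-adjoint and positive semidefinite. Since $(A_N^\ast)^\ast=A_N$ and $T_{\theta,N}^\ast=T_{\theta,N}$, we get $B_{\theta,N}^\ast=B_{\theta,N}$. Moreover $\langle B_{\theta,N}v,v\rangle_W=\langle T_{\theta,N}A_N^\ast v,A_N^\ast v\rangle_W\ge0$. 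Hence both spectra are real, nonnegative, and admit the variational description
\[
  \mu_{j,N}=\max_{\dim V=j}\ \min_{0\ne v\in V}\frac{\langle T_{\theta,N}A_N^\ast v,A_N^\ast v\rangle_W}{\langle v,v\rangle_W},
\]
with the analogous formula for $\sigma_{j,N}$ using the quotient $\langle T_{\theta,N}u,u\rangle_W/\langle u,u\rangle_W$.

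The key step is the change of variables $u=A_N^\ast v$. Because $A_N$ is invertible, so is $A_N^\ast$, and $V\mapsto A_N^\ast V$ is a bijection on $j$-dimensional subspaces. Factor the Rayleigh quotient of $B_{\theta,N}$ as
\[
  \frac{\langle T_{\theta,N}u,u\rangle_W}{\langle u,u\rangle_W}\cdot\frac{\|A_N^\ast v\|_W^2}{\|v\|_W^2}.
\]
The first factor is nonnegative. The second lies in $[\alpha_N^2,\beta_N^2]$, using $\|A_N^\ast\|_W=\|A_N\|_W$ and $\|(A_N^\ast)^{-1}\|_W=\|A_N^{-1}\|_W$; these adjoint norm identities hold in any finite-dimensional inner-product space.

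Taking min over $v\in V$ and then max over $V$ transfers these pointwise bounds to the eigenvalues. This step needs some care because the factor multiplies a nonnegative quantity, so the inequality direction is preserved. I expect it to be the only delicate point: the min over $V$ commutes with a monotone pointwise bound but not with a product of two varying factors. The remedy is to compare each quotient pointwise, $\alpha_N^2 R_T(u)\le R_B(v)\le\beta_N^2 R_T(u)$, and then take the min over $V$ and max over $V$ on both sides. Because the subspace correspondence is bijective, this yields $\alpha_N^2\sigma_{j,N}\le\mu_{j,N}\le\beta_N^2\sigma_{j,N}$. This is essentially Ostrowski's theorem for congruences, adapted to the $W$-inner product.

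Finally, under \eqref{eq:uniform-A-bounds} of Assumption~\ref{ass:nystrom-W-stability}, we have $\alpha_N\ge c_A$ and $\beta_N\le C_A$. Therefore $c_A^2\sigma_{j,N}\le\mu_{j,N}\le C_A^2\sigma_{j,N}$, with constants independent of $N$.
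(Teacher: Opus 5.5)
Your argument is correct, and it reaches the result by a somewhat different route than the paper.

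The paper passes to Euclidean coordinates via the isometry $v\mapsto W^{1/2}v$. It writes $B_{\theta,N}=(A_NT_{\theta,N}^{1/2})(A_NT_{\theta,N}^{1/2})^\ast$ and then cites the standard singular-value product inequalities $\alpha_N s_j(X)\le s_j(A_NX)\le\beta_N s_j(X)$ with $X=T_{\theta,N}^{1/2}$. You instead stay in $(\R^N,\langle\cdot,\cdot\rangle_W)$, apply Courant--Fischer to both operators, and use the congruence $u=A_N^\ast v$ with a pointwise Rayleigh-quotient comparison. In effect you prove Ostrowski's theorem for the map $T\mapsto A_NTA_N^\ast$.

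The two arguments are closely linked. Since $\langle B_{\theta,N}v,v\rangle_W=\|T_{\theta,N}^{1/2}A_N^\ast v\|_W^2$, your min--max comparison is exactly the usual proof of the singular-value product inequality for $T_{\theta,N}^{1/2}A_N^\ast$, whose singular values coincide with those of $A_NT_{\theta,N}^{1/2}$.

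The trade-off is as follows. The paper's version is shorter because it outsources that step to a cited inequality, but it needs the square root and the change of coordinates. Yours is self-contained, requires neither $T_{\theta,N}^{1/2}$ nor the $W^{1/2}$ reduction, and treats zero and nonzero eigenvalues uniformly.

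You also handle the one delicate point correctly. Comparing the quotients pointwise before taking the min over $V$ and then the max over $V$, together with the fact that $V\mapsto A_N^\ast V$ is a bijection of $j$-dimensional subspaces, is exactly what is needed.
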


\begin{proof}
After the isometry $v\mapsto W^{1/2}v$, the statement is an ordinary
Euclidean matrix inequality.  Let $T_{\theta,N}^{1/2}$ be the positive
square root.  The nonzero eigenvalues of
$B_{\theta,N}=A_NT_{\theta,N}A_N^\ast$ are the squared singular values of
$A_NT_{\theta,N}^{1/2}$, while the eigenvalues of $T_{\theta,N}$ are the
squared singular values of $T_{\theta,N}^{1/2}$.  The standard product
inequalities for singular values give
$\alpha_N s_j(T_{\theta,N}^{1/2})\le
s_j(A_NT_{\theta,N}^{1/2})\le
\beta_N s_j(T_{\theta,N}^{1/2})$; squaring yields
\eqref{eq:bie-ntk-spectral-equivalence}.
\end{proof}

Proposition~\ref{prop:bie-ntk-spectral-equivalence} requires no simultaneous
diagonalization of $A_N$ and $T_{\theta,N}$.  At fixed $\theta$, any large
spread in the nonzero residual-contraction rates is therefore inherited
from the empirical NTK up to constants that remain uniform with respect to
quadrature refinement.

\paragraph{NTK spectrum and the meaning of frequency.}
The tendency of neural networks to fit lower-frequency components before
more oscillatory ones is commonly described as \emph{spectral bias} or the
\emph{frequency principle}~\cite{rahaman2019spectral,xu2020frequency}.

The empirical NTK has the decomposition
\begin{equation}\label{eq:ntk-spec}
    T_{\theta,N}=V_N\Sigma_NV_N^\ast,
    \qquad
    \Sigma_N=\operatorname{diag}(\sigma_{1,N},\ldots,\sigma_{N,N}),
\end{equation}
with $W$-orthonormal eigenvectors $\tau_{j,N}$.  At the continuum level,
Proposition~\ref{prop:ntk-compact} shows that $\mathcal T_\theta$ has rank
at most $p$.  It therefore has at most $p$ positive eigenvalues and vanishes
on the orthogonal complement of its range.

Compactness alone does \emph{not} provide a canonical Fourier ordering of
the eigenfunctions on a general surface.  Accordingly, throughout the
analysis we use the ordered eigenvalues only as tangent-kernel directions.
When the numerical experiments refer to low- and high-frequency NTK modes,
that terminology describes an additional empirical observation: for the
smooth sinusoidal networks used there, the leading computed eigenvectors are
less oscillatory and increasingly oscillatory structure appears deeper in
the spectrum.  Figures~\ref{fig:ntk_multi_initial}--\ref{fig:single_level} test this
ordering directly.

\paragraph{Exact instantaneous loss identity.}
Project the back-projected residual onto the NTK eigenbasis,
\begin{equation}\label{eq:proj-coords}
    A_N^\ast r_{\theta,N}
    =\sum_{j=1}^N\eta_j(t)\tau_{j,N}(\theta(t)),
    \qquad
    \eta_j(t)=\langle\tau_{j,N}(\theta(t)),A_N^\ast r_{\theta,N}(t)\rangle_W.
\end{equation}
Then
\begin{equation}\label{eq:proj-flow}
  \frac{d}{dt}L(\theta(t);\GamN)
  =-\langle A_N^\ast r_{\theta,N},
          T_{\theta,N}A_N^\ast r_{\theta,N}\rangle_W
  =-\sum_{j=1}^N\sigma_{j,N}(\theta)|\eta_j(t)|^2.
\end{equation}
This identity is exact even though the eigenbasis evolves with $\theta$.
It shows that a large component of $A_N^\ast r_{\theta,N}$ can contribute
very little to the instantaneous loss decrease if it lies in small-$\sigma$
directions.

\paragraph{Residual projections used in the numerical diagnostics.}
The lower panels of Figures~\ref{fig:ntk_multi_initial}--\ref{fig:single_level}
serve a different purpose from the exact decomposition
\eqref{eq:proj-flow}.  They project the residual itself onto the NTK
eigenbasis,
\begin{equation}\label{eq:res-proj-coords}
    r_{\theta,N}
    =\sum_{j=1}^N \zeta_j(t)\tau_{j,N}(\theta(t)),
    \qquad
    \zeta_j(t)=\langle\tau_{j,N}(\theta(t)),r_{\theta,N}(t)\rangle_W.
\end{equation}
These coefficients should not be confused with the $\eta_j$ in
\eqref{eq:proj-coords}: the adjoint $A_N^\ast$ may rotate the residual, so
$\zeta_j$ is not the modal coefficient in the instantaneous loss-decay
identity.  We nevertheless use $\zeta_j$ in the figures because it displays
the unresolved BIE residual directly.  If
$e_N:=\rho_{\theta,N}-\rho_N^\star$, then $r_{\theta,N}=A_Ne_N$, and
Assumption~\ref{ass:nystrom-W-stability} gives
\begin{equation}\label{eq:res-error-equivalence}
    c_A\|e_N\|_W
    \le \|r_{\theta,N}\|_W
    \le C_A\|e_N\|_W.
\end{equation}
Thus the residual norm is uniformly equivalent to the density error.  The
coefficients $\zeta_j$ show how this unresolved residual is distributed
relative to the tangent directions, while \eqref{eq:res-error-equivalence}
gives the residual magnitude a direct error interpretation.  Equation~\eqref{eq:proj-flow},
by contrast, describes the instantaneous optimization dynamics.

For comparison, the unparametrized density flow corresponds to
$T_{\theta,N}=I$ and gives
$\dot L=-\|A_N^\ast r_{\theta,N}\|_W^2$.  This motivates the algebraic
definition
\begin{equation}\label{eq:eff-tangent-space}
  \mathcal E_\theta^{(N)}
  :=\operatorname{span}\{\tau_{j,N}(\theta):\sigma_{j,N}(\theta)\ge1\}.
\end{equation}
The threshold $1$ is only a normalization relative to the unparametrized
loss identity; it is not a universal physical frequency cutoff.  Directions
inside $\mathcal E_\theta^{(N)}$ contribute at least as strongly as the
corresponding unit-NTK direction in \eqref{eq:proj-flow}, whereas directions
deep in the small-eigenvalue tail contribute much less.

For a frozen $\theta$, explicit Euler discretization of the residual flow is
stable under a step size controlled by $\mu_{1,N}(B_{\theta,N})$, which
Proposition~\ref{prop:bie-ntk-spectral-equivalence} bounds by
$C_A^2\sigma_{1,N}$.  Quadrature weights are intrinsic to the discretization
rather than a normalization applied afterwards: on a quasi-uniform grid
where $w_j=\mathcal O(N^{-1})$, $T_{\theta,N}=G_NW$ is a Nystr\"om
discretization of the continuum kernel $T_\theta$, so its eigenvalues track
those of $T_\theta$ instead of scaling with $N$ (tubular weights fulfill the
same role in IBIM).  Without the weights, $T_{\theta,N}$ would reduce to
$G_N$, whose leading eigenvalues scale linearly with $N$, and the spectra at
consecutive levels would differ by a factor of $N$ rather than
characterizing a single continuum operator.  Quadrature weighting thus
eliminates grid-dependent step-size penalties and lets the warm-start
analysis in Section~\ref{sec:Algs:analysis} operate on a single fixed
spectrum.

In practice, there are two effects that modify this idealized picture.  First of all, parameter $\sigma_{1,N}$ changes during training. Experiments in~Figures~\ref{fig:ntk_multi_initial}--\ref{fig:single_level} show that they are not static.  Second, Adam's
parameter preconditioning changes the quantitative damping rates, so
\eqref{eq:proj-flow} and
Proposition~\ref{prop:bie-ntk-spectral-equivalence} characterize the loss
geometry rather than the exact discrete optimizer path.
Section~\ref{sec:Examples:spectral} shows that this spectral organization
remains visible along actual training trajectories.

\subsection{Expansion of the effective neural tangent space}\label{sec:Algs:expansion}

As mentioned above, the NTK evolves throughout training.
 Within a fixed level, changing $\theta$ changes the continuum tangent kernel
itself. Across levels, changing the quadrature re-samples the same kernel at the transferred parameter value, and the change can reveal discrete directions that were not
resolved accurately on the coarser grid even when $\theta$ is unchanged.

\paragraph{A finite-width within-level mechanism.}
Consider the one-hidden-layer reduction
\begin{equation}\label{eq:one-layer-reduction}
  \rho_\theta(x)=\sum_m a_m\psi(z_m(x)),
  \qquad z_m(x)=w_m^\top x+b_m.
\end{equation}
Let $\Psi_m=\PiN[\psi(z_m)]$ and let
$\Phi_m=\PiN[\psi'(z_m)\widetilde x]$ denote the sampled sensitivity to
$(w_m,b_m)$, with $\widetilde x=(x,1)$.  The empirical NTK splits as
\begin{equation}\label{eq:ntk-split}
  T_{\theta,N}
  =\underbrace{\sum_m\Psi_m\Psi_m^\ast}_{\text{output-weight component}}
   +\underbrace{\sum_m a_m^2\Phi_m\Phi_m^\ast}_{\text{inner-weight component}}.
\end{equation}
Hence
\begin{equation}\label{eq:rayleigh-amp}
  \langle u,T_{\theta,N}u\rangle_W
  =\sum_m|\langle\Psi_m,u\rangle_W|^2
   +\sum_m a_m^2\|\Phi_m^\ast u\|_2^2.
\end{equation}
For fixed $(w_m,b_m)$ and all other parameters, increasing $a_m^2$ adds a
positive-semidefinite term to $T_{\theta,N}$; by the min--max principle its
ordered eigenvalues cannot decrease.  Thus growth of an output amplitude is
one explicit mechanism by which the tangent spectrum can be lifted.

The output-weight dynamics are
\begin{equation}\label{eq:am-dyn}
  \frac{d}{dt}a_m
  =-\langle A_N^\ast r_{\theta,N},\Psi_m\rangle_W.
\end{equation}
Equation~\eqref{eq:am-dyn} determines whether $|a_m|$ actually grows; mere
nonzero overlap is not enough, because the sign relative to $a_m$ matters.
The point of \eqref{eq:ntk-split} is therefore not to assert monotone NTK
growth along every trajectory, but to identify a concrete route by which
residual-driven parameter motion can strengthen previously weak tangent
directions.

The activation controls how the two factors in this mechanism are related.
Formally, differentiation multiplies Fourier content by frequency,
$\widehat{\psi'}(\xi)=i\xi\widehat\psi(\xi)$.  For sinusoidal activations,
$\sin' = \cos$, so the activation and its inner-weight sensitivity retain
the same frequency scale up to phase and polynomial coordinate factors.
This provides a useful reason for using sinusoidal networks in the spectral
experiments, but it does not imply that an arbitrary residual is aligned
with a single NTK eigenmode.

\paragraph{Cross-level re-sampling.}
Suppose the parameter $\theta$ is frozen at the end of a level and the grid is
refined.  The continuum kernel $T_\theta$ is unchanged, while
$T_{\theta,N}=G_NW$ is replaced by a finer Nystr\"om sample.  On nonnested
grids the discrete eigenspaces are not literal supersets of one another and
the dimension of \eqref{eq:eff-tangent-space} need not be monotone as a
matter of algebra.  What refinement does provide is a more resolved
discretization of the continuum tangent operator and a new sampling of the
residual.  In the experiments, this re-sampling reveals additional
directions with eigenvalues above the effective threshold and moves a
substantial part of the warm-start residual into those directions.  We
refer to that observed phenomenon as \emph{expansion of the effective
neural tangent space}.

The two effects then interact.  Refinement changes what the optimizer sees;
subsequent parameter motion can change the NTK itself through mechanisms
such as \eqref{eq:ntk-split}, rotation of the sensitivities under
$w_m$-dynamics, and deeper-layer tangent-kernel alignment
\cite{atanasov2022silent}.  The one-layer calculation isolates only the
first of these mechanisms and is not intended as a complete dynamical model
of the deep networks used numerically.

In the spectral experiment, the effective space
$\mathcal E_\theta^{(N)}$ at the coarser levels is dominated by the
less-oscillatory leading NTK directions.  Training therefore constructs the
coarse-level density primarily through these smooth tangent directions,
rather than by an arbitrary nodewise interpolation.  As refinement exposes
more oscillatory effective directions, the same network acquires the
additional structure needed to represent finer scales.  We regard this as
an empirical smoothness mechanism of the multilevel representation, not as
a general interpolation theorem.

\paragraph{Plateaus.}
Equation~\eqref{eq:proj-flow} also gives a conservative interpretation of a
training plateau.  If the back-projected residual lies mainly in directions
with very small $\sigma_{j,N}$, the loss decreases slowly.  During such an
interval the parameters may still move and reshape the NTK; if this motion
raises the relevant tangent directions, faster contraction can follow.  If,
on the other hand, the back-projected residual is nearly orthogonal both to
the effective tangent directions and to the sensitivities that can alter
them, then the plateau can persist.  In the one-layer model a sufficient
signature for the amplitude mechanism to be inactive is
\begin{equation}\label{eq:stall}
  |\eta_j|\approx0\quad\text{for the effective directions},
  \qquad
  \langle A_N^\ast r_{\theta,N},\Psi_m\rangle_W\approx0
  \quad\text{for all }m.
\end{equation}
This is a diagnostic, not a theorem that all observed plateaus have two
universal phases.  Section~\ref{sec:Examples:spectral} examines the actual
spectral trajectories and shows why refinement is effective in the tested
problem.

\subsection{The multilevel algorithm}\label{sec:Algs:smt}

\paragraph{Ladder of quadrature grids.}
Fix levels $\ell = 1,\ldots,\ell_{\text{F}}$. At level $\ell$ we
apply the chosen quadrature rule on $\Gam$ with $N_\ell$ nodes and positive
weights, with node set and weight matrix
\begin{equation}\label{eq:level-grid}
    \Gam^\ell \;:=\; \Gamma_{N_\ell}
        \;=\; \{x_1^\ell,\ldots,x_{N_\ell}^\ell\},
    \qquad
    W^\ell \;:=\; \mathrm{diag}\bigl(w_1^\ell,\ldots,w_{N_\ell}^\ell\bigr).
\end{equation}
We do not assume the grids are nested.

The levels are indexed by increasing resolution,
$N_1<N_2<\cdots<N_{\ell_{\text F}}$.  They form a fixed ladder; the
order in which the solver visits them is specified separately by the
schedule below and need not be monotone.


\paragraph{Per-level residual and loss.}
At level $\ell$ the discrete least-squares functional \eqref{eq:loss-disc} is
\begin{equation}\label{eq:loss-level}
    L^{(\ell)}(\theta)
    \;:=\;
    \tfrac12\,\bigl\| A_{N_\ell}\, \Pi_{\Gam^\ell}[\rho_\theta]
                     \,-\, g_{N_\ell} \bigr\|_{W^\ell}^{2},
\end{equation}
with discrete residual and parameter Jacobian
\begin{equation}\label{eq:res-level}
    r_\theta^{(\ell)}
    \;:=\;
    A_{N_\ell}\, \Pi_{\Gam^\ell}[\rho_\theta] \,-\, g_{N_\ell},
    \qquad
    \bigl(J_\theta^{(\ell)}\bigr)_{ik}
    \;:=\;
    \partial_{\theta_k}\rho_\theta(x_i^\ell).
\end{equation}
The parameter-space gradient is
\begin{equation}\label{eq:grad-level}
    \nabla_\theta L^{(\ell)}(\theta)
    \;=\;
    \bigl(J_\theta^{(\ell)}\bigr)^{\ast}\, A_{N_\ell}^{\ast}\,
        r_\theta^{(\ell)}
    \;\in\; \R^p,
\end{equation}
where both $\bigl(J_\theta^{(\ell)}\bigr)^\ast$ and $A_{N_\ell}^\ast$
denote $W^\ell$-adjoints, in the sense of Section~\ref{sec:notation}. The
empirical NTK at level $\ell$ is
$T_\theta^{(\ell)} := J_\theta^{(\ell)} (J_\theta^{(\ell)})^{\ast} \in
\R^{N_\ell \times N_\ell}$.

\paragraph{Per-level minimizers.}
$L^{(\ell)}$ is convex in $\Pi_{\Gam^\ell}[\rho_\theta]$ and bounded below
by $0$. When the network has enough capacity to interpolate the discrete
solution $\rNlstar$ at the $N_\ell$ nodes of
$\Gam^\ell$, the lower bound is attained at some level-$\ell$ minimizer
$\tstar$, and the corresponding network output satisfies
\begin{equation}\label{eq:level-minimizer}
    \Pi_{\Gam^\ell}\bigl[\rhotstar\bigr] \;=\; \rNlstar.
\end{equation}
$L^{(\ell)}$ pins $\rho_\theta$ only on $\Gam^\ell$, so the values of
$\rhotstar$ on $\Gam \setminus \Gam^\ell$ are unconstrained, and
$\rhotstar$ is in general not a minimizer of either $L^{(\ell+1)}$
or the continuum loss \eqref{eq:loss-cont}.

\paragraph{Warm-started traversal and visiting schedules.}
The solver visits the levels of the ladder in an order fixed in advance by
a \emph{schedule}
\begin{equation}\label{eq:schedule}
    \mathcal{S} \;=\; (s_1, s_2, \ldots, s_T),
    \qquad s_t \in \{1,\ldots,\ell_{\text{F}}\},
\end{equation}
a sequence of $T$ \emph{stages}, each naming the grid used at that stage.
Stage $t$ trains on $\Gam^{s_t}$, warm-started from the parameters returned
by the previous stage,
\begin{equation}\label{eq:warm-start-stage}
    \theta^{(t,\mathrm{init})} \;:=\; \theta^{(t-1)},
    \qquad
    \theta^{(t)} \;\approx\; \arg\min_\theta L^{(s_t)}(\theta),
\end{equation}
with $\theta^{(0)}$ a fresh random initialization and $\theta^{(t)}$ the
parameter returned after a finite number of stochastic gradient steps.

The progressive schedule is
$\mathcal S=(1,2,\ldots,\ell_{\mathrm F})$: each stage refines the
quadrature, so the stage and level indices coincide and the hierarchy is
traversed once from coarse to fine.  This is the schedule most directly
related to cascadic multigrid~\cite{bornemann1996cascadic}.  A cyclic
schedule is nonmonotone; for example,
$\mathcal S=(1,2,3,4,3,4,5)$ revisits levels~3 and~4 before the final
refinement.  The purpose of such revisits is empirical: a return to a
coarser discretization can reshape the parameters before the next ascent.
We do not identify this schedule with a classical FMG cycle.

Algorithm~\ref{alg:slsg} serves as a building block of the main MLSG algorithm~\ref{alg:smt}.
\textsc{SLSG} performs stochastic-gradient residual minimization on a
fixed quadrature, while \textsc{MLSG} calls \textsc{SLSG} according to
$\mathcal S$, warm-starting every stage from the parameters returned by the
preceding one.

\paragraph{Mini-batch loss.}
The implementation minimizes the same quadrature-weighted loss
$L^{(\ell)}$ defined in \eqref{eq:loss-level}.  For a uniformly sampled
mini-batch $\mathcal B\subset\{1,\ldots,N_\ell\}$, the optimizer uses
\begin{equation}\label{eq:minibatch-loss}
    L_{\mathcal B}^{(\ell)}(\theta)
    :=\frac{N_\ell}{2|\mathcal B|}\sum_{i\in\mathcal B}
       w_i^\ell\bigl|r_{\theta,i}^{(\ell)}\bigr|^2,
\end{equation}
which is an unbiased stochastic estimator of the deterministic quadrature
loss \eqref{eq:loss-level}.  When no level index is needed, we write this
mini-batch loss as $L_{\mathcal B}(\theta;\GamN)$.  Computing
$\nabla_\theta L_\mathcal B^{(\ell)}$ requires only the
$|\mathcal B|$ rows of $A_{N_\ell}$ indexed by $\mathcal B$, together
with one forward and one backward pass of $\rho_\theta$ at the $N_\ell$
quadrature nodes, for a per-step arithmetic count of
$\mathcal O(|\mathcal B|N_\ell)$.  A complete epoch visits all rows and
therefore still entails $\mathcal O(N_\ell^2)$ dense kernel interactions.

\begin{algorithm}[h]
\caption{\textsc{SLSG}: single-level stochastic-gradient neural solver}
\label{alg:slsg}
\begin{algorithmic}[1]
\Require \textbf{Inputs:}
\Statex \hspace{1.2em}$\bullet$ Quadrature grid $\GamN$; discrete BIE operator $A_N$, available through rowwise evaluation, and data $g_N$
\Statex \hspace{1.2em}$\bullet$ Network $\rho_\theta$; initial parameters $\theta^{\mathrm{init}}$
\Statex \hspace{1.2em}$\bullet$ Maximum epochs $M$; batch size $b$
\Statex \hspace{1.2em}$\bullet$ Learning-rate schedule $\{\eta_m\}_{m=1}^{M}$
\Statex \hspace{1.2em}$\bullet$ Stopping tolerance $\varepsilon$
\Ensure Trained parameters $\theta^{\mathrm{out}}$
\State $\theta \gets \theta^{\mathrm{init}}$
\For{$m = 1, 2, \ldots, M$}
    \State Draw a uniform random permutation $\pi$ of $\{1,\ldots,N\}$
    \State Set $K \gets \lceil N / b \rceil$
    \For{$k = 1, 2, \ldots, K$}
        \State $\mathcal{B}_k \gets
                \bigl\{\pi((k-1)b+1),\ldots,\pi(\min(kb, N))\bigr\}$
        \State $\hat g \gets \nabla_\theta L_{\mathcal{B}_k}(\theta)$
        \State $\theta \gets \mathrm{Adam}\bigl(\theta,\,\hat g;\, \eta_m\bigr)$
    \EndFor
    \If{$L(\theta;\GamN) \leq \varepsilon$}
        \State \textbf{break}
    \EndIf
\EndFor
\State \Return $\theta^{\mathrm{out}} \gets \theta$
\end{algorithmic}
\end{algorithm}

\begin{algorithm}[h]
\caption{\textsc{MLSG}: multilevel stochastic-gradient neural solver}
\label{alg:smt}
\begin{algorithmic}[1]
\Require \textbf{Inputs:}
\Statex \hspace{1.2em}$\bullet$ Grid ladder $\bigl\{(\Gam^\ell, W^\ell, A_{N_\ell}, g_{N_\ell})\bigr\}_{\ell=1}^{\ell_{\text{F}}}$
\Statex \hspace{1.2em}$\bullet$ Visiting schedule $\mathcal{S} = (s_1,\ldots,s_T)$ with $s_t \in \{1,\ldots,\ell_{\text{F}}\}$
\Statex \hspace{2.6em}(progressive: $\mathcal{S}=(1,\ldots,\ell_{\text{F}})$; cyclic: non-monotone)
\Statex \hspace{1.2em}$\bullet$ Network $\rho_\theta$; initial parameters $\theta^{(0)}$
\Statex \hspace{1.2em}$\bullet$ Per-stage epoch budgets $\{M_t\}_{t=1}^T$; batch size $b$
\Statex \hspace{1.2em}$\bullet$ Per-stage learning-rate schedules $\bigl\{\{\eta_m^{(t)}\}_{m=1}^{M_t}\bigr\}_{t=1}^T$
\Statex \hspace{1.2em}$\bullet$ Per-stage tolerances $\{\varepsilon_t\}_{t=1}^T$
\Ensure Trained parameters $\theta^{(T)}$
\For{$t = 1, 2, \ldots, T$}
    \State $\ell \gets s_t$
        \Comment{grid visited at stage $t$}
    \State $\theta^{(t)} \gets
            \textsc{SLSG}\bigl(\Gam^\ell,
                                  A_{N_\ell}, g_{N_\ell},
                                  \rho_\theta, \theta^{(t-1)},
                                  M_t, b,
                                  \{\eta_m^{(t)}\}_{m=1}^{M_t},
                                  \varepsilon_t\bigr)$
\EndFor
\State \Return $\theta^{(T)}$
\end{algorithmic}
\end{algorithm}


\paragraph{Tolerance schedule.}
Lemma~\ref{lem:res-to-err} converts the level loss into a bound on the
distance from the network sample to the discrete Nystr\"om solution, while
\eqref{eq:nystrom-conv} contributes a discretization error of order
$h_\ell^\nu$.  This discretization estimate alone suggests that there is
little benefit in driving the weighted level loss far below
$h_\ell^{2\nu}$.  The cross-level analysis in
Section~\ref{sec:Algs:analysis} refines this heuristic by including the
sampling regularity and yields the matched scale
$\varepsilon_\ell\asymp h_\ell^{2m}$ with $m=\min(s,\nu)$; see
\eqref{eq:matched-tolerance}.  Under a cyclic schedule, a revisited level may
therefore be assigned a tighter target when it is encountered again.

The remaining optimizer parameters can be tuned inexpensively on coarse
levels.  In the experiments we use simple prescribed per-stage learning-rate
and batch-size schedules, stated with each example.

\paragraph{Two scenarios at near-stationarity.}
Write $\theta$ for the current parameter, and suppose the level-$\ell$
optimization has run long enough that
\begin{equation}\label{eq:near-stationary}
    \nabla_\theta L^{(\ell)}(\theta)
    \;=\;
    \bigl(J_\theta^{(\ell)}\bigr)^{\ast}\,
        A_{N_\ell}^{\ast}\, r_\theta^{(\ell)}
    \;\approx\; 0 .
\end{equation}
A small parameter gradient has two qualitatively different explanations.
\textbf{(S1) Small residual:} $r_\theta^{(\ell)}\approx0$, so the iterate is
close to the discrete level solution.  \textbf{(S2) Tangent-space stall:}
the residual is not small, but $A_{N_\ell}^\ast r_\theta^{(\ell)}$ lies
approximately in $\ker(T_\theta^{(\ell)})$, equivalently in the
$W^\ell$-orthogonal complement of the sampled tangent space.  In~(S2) the
current parametrization is locally insensitive to the remaining residual.
Refinement changes both the sampled residual and tangent space, so it can
restore a nonzero gradient in either case, although this is a mechanism
rather than a guarantee.  The next subsection quantifies only the size of
the residual transferred across a refinement step.

\subsection{Cross-level residual transfer and balance of target loss tolerance}\label{sec:Algs:analysis}

The network parameters pass unchanged from level $\ell$ to level
$\ell+1$, but the discrete residual does not: both the Nystr\"om operator
and the quadrature nodes change.  A small level-$\ell$ loss therefore gives
no cross-level bound without additional control between the sampled nodes.
The following sampling assumption supplies that control.

Let $h_\ell$ denote the grid size
 of $\Gam^\ell$.  We make the
cross-level regularity needed for this step explicit.

\begin{assumption}[Uniform residual regularity and stable sampling]\label{ass:cross-level-sampling}
There is a normed function space $X(\Gam)$, an exponent $s>0$, and constants
$C_X,C_S,C_Q$ independent of the level such that every parameter encountered
in the multilevel run satisfies
\[
  \|r_\theta\|_{X(\Gam)}\le C_X,
\]
and the quadrature nodes satisfy the two sampling estimates
\begin{align}
  \|v\|_{L^2(\Gam)}^2
  &\le C_S\Bigl(\|\Pi_{\Gam^\ell}v\|_{W^\ell}^2
      +h_\ell^{2s}\|v\|_{X(\Gam)}^2\Bigr),
      \label{eq:sampling-ineq}\\
  \|\Pi_{\Gam^\ell}v\|_{W^\ell}^2
  &\le C_Q\Bigl(\|v\|_{L^2(\Gam)}^2
      +h_\ell^{2s}\|v\|_{X(\Gam)}^2\Bigr),
      \label{eq:sampling-stability}
\end{align}
for all $v\in X(\Gam)$.  The first inequality controls what can occur
between the nodes; the second prevents the discrete sampled norm from
amplifying a continuum residual without bound.  For quasi-uniform nodes and
Lipschitz functions one may take $s=1$ by cellwise estimates; higher-order
values of $s$ require corresponding higher-order sampling or zeros
estimates together with compatible quadrature estimates, and are not
inferred from smoothness alone.
\end{assumption}

Assume also that the level-$\ell$ Nystr\"om rule has consistency order
$\nu>0$ on the residuals generated during training.  Then
\begin{equation}\label{eq:consist-norm}
  \|\Pi_{\Gam^\ell}[r_\theta]-r_\theta^{(\ell)}\|_{W^\ell}
  \le C_1h_\ell^\nu.
\end{equation}
Combining \eqref{eq:sampling-ineq} and \eqref{eq:consist-norm}, with
$m:=\min(s,\nu)$, yields
\begin{equation}\label{eq:cont-residual-bound}
  \|r_\theta\|_{L^2(\Gam)}^2
  \le C\Bigl(\|r_\theta^{(\ell)}\|_{W^\ell}^2+h_\ell^{2m}\Bigr).
\end{equation}
If the level-$\ell$ stopping rule gives
$L^{(\ell)}(\theta^{(\ell,\mathrm{end})})\le\varepsilon_\ell$, then
$\|r_\theta^{(\ell)}\|_{W^\ell}^2\le2\varepsilon_\ell$, and
\eqref{eq:cont-residual-bound} yields
\[
  \|r_\theta\|_{L^2(\Gam)}^2
  \le C\bigl(\varepsilon_\ell+h_\ell^{2m}\bigr)
  \qquad\text{at }\theta=\theta^{(\ell,\mathrm{end})}.
\]
At the next level, operator consistency and
\eqref{eq:sampling-stability} give
\[
  \|r_\theta^{(\ell+1)}\|_{W^{\ell+1}}
  \le \|\Pi_{\Gam^{\ell+1}}r_\theta\|_{W^{\ell+1}}
       +C_1h_{\ell+1}^{\nu}.
\]
Using the uniform $X(\Gam)$ bound, the preceding continuum estimate, and
$h_{\ell+1}\le h_\ell$, we obtain
\begin{equation}\label{eq:warm-start-bound}
  L^{(\ell+1)}(\theta^{(\ell,\mathrm{end})})
  \le C\bigl(\varepsilon_\ell+h_\ell^{2m}\bigr).
\end{equation}
The constant is independent of the level as long as the consistency,
regularity, and two sampling constants above are uniform.  This estimate
applies to refinement steps from level $\ell$ to level $\ell+1$; the benefit
of the fine-to-coarse revisits in a cyclic schedule is empirical and is not
covered by this bound.

Estimate~\eqref{eq:warm-start-bound} has a direct algorithmic consequence.
Once $\varepsilon_\ell$ is appreciably smaller than $h_\ell^{2m}$, further
optimization on level $\ell$ cannot improve the guaranteed next-level warm
start by the same factor, because the cross-level discretization term
already dominates.  This motivates a matched schedule
\begin{equation}\label{eq:matched-tolerance}
  \varepsilon_\ell\asymp h_\ell^{2m}.
\end{equation}
If $h_\ell$
is reduced successively by a fixed factor, then the optimizer
enters each refined level within a level-independent constant factor of its
new target tolerance.

The norm estimate should be kept separate from the spectral interpretation
used to motivate MLSG.  Equations
\eqref{eq:cont-residual-bound}--\eqref{eq:warm-start-bound} control the
\emph{size} of the transferred residual; they do not prove that the
$h_\ell^{2m}$ contribution is localized in a particular band of NTK
eigenvectors.  The statement that the inherited component is largely
coarse and that refinement exposes a new spectral band is therefore an
empirical claim in this paper, tested by the projections in
Section~\ref{sec:Examples:spectral}.  Keeping these two statements separate
allows the cross-level estimate to remain valid without imposing an
unjustified Fourier decomposition on a general surface.

The estimate also does not by itself yield an end-to-end complexity theorem
for Adam.  Per-update cost depends on the number of sampled kernel
interactions, network forward/backward work, memory traffic, and device
utilization, while the number of updates required to reach a target loss is
optimizer- and problem-dependent.  We therefore report epoch counts and wall-clock behavior in
Section~\ref{sec:Examples}, rather than infer a formal linear-complexity
result from \eqref{eq:warm-start-bound}.

\section{Numerical examples}\label{sec:Examples}

The experiments serve two complementary purposes.  We first examine the
spectral mechanism directly, comparing single-level and multilevel training
through NTK spectra and modal residual diagnostics.  We then test the solver
on three classes of BIEs: interior Dirichlet Laplace/Poisson problems,
exterior Neumann Helmholtz problems, and an exterior Robin Laplace problem
on a hypersurface in $\R^4$.  Across these examples we use both parametric
and volumetric surface representations and both real- and complex-valued
densities.

\paragraph{Problem 1: Interior Dirichlet problem for Poisson's equation. }\label{para:poisson}
On a bounded domain $\Omega \subset \R^d$, $d \in \{2,3\}$, with $C^2$
boundary $\Gam = \partial\Omega$, find $u$ satisfying
\[
    \Delta u \;=\; f \quad \text{in } \Omega,
    \qquad
    u \;=\; g \quad \text{on } \Gam.
\]
Writing $u = u_p + v$ with $u_p$ a known particular solution of
$\Delta u_p = f$, the harmonic correction $v$ inherits
Dirichlet trace $g - u_p|_\Gam$. We represent $v$ as a double-layer
potential
\begin{equation}\label{eq:lap-rep}
    v(x) \;=\; \int_\Gam \frac{\partial G_{0, d}(x,y)}{\partial \mathbf{n}_y}\,
                          \rho(y)\,dS(y),
    \qquad x \in \Omega,
\end{equation}
with $G_{0, d}$ the Laplace fundamental solution
\[
    G_{0, d}(x,y) \;=\;
    \begin{cases}
        -\tfrac{1}{2\pi}\,\log|x-y|, & d = 2,\\[2pt]
        \tfrac{1}{4\pi}\,|x-y|^{-1}, & d = 3.
    \end{cases}
\]
The interior-Dirichlet jump relation~\cite{kress2014linear}
then yields the second-kind BIE with kernel
\begin{equation}\label{eq:lap-kernel}
    k_{\mathrm{Lap}}(x,y) \;=\; -\frac{\partial G_{0, d}(x,y)}{\partial \mathbf{n}_y}.
\end{equation}


\paragraph{Problem 2: Exterior Neumann problem for the Helmholtz equation.}\label{para:exhelm}
On the exterior $\R^3 \setminus \overline\Omega$, find $u$ satisfying
\[
    \Delta u + \kappa^2 u \;=\; 0
        \quad \text{in } \R^3 \setminus \overline\Omega,
    \qquad
    \frac{\partial u}{\partial n} \;=\; g \quad \text{on } \Gam,
\]
together with the Sommerfeld radiation condition at infinity. We seek
$u$ as a single-layer potential
\begin{equation}\label{eq:helm-rep}
    u(x) \;=\; \int_\Gam G_{\kappa}(x,y)\,\rho(y)\,dS(y),
    \qquad x \in \R^3 \setminus \overline\Omega,
\end{equation}
with the radiating Helmholtz Green's function
\[
    G_{\kappa}(x,y) \;=\; \frac{e^{i\kappa|x-y|}}{4\pi\,|x-y|}.
\]
Taking the normal derivative on the exterior side and applying the jump
relation gives the second-kind BIE \eqref{eq:BIE} with right-hand side
proportional to $g$ and kernel
\begin{equation}\label{eq:helm-kernel}
    k_{\mathrm{Helm}}(x,y)
    \;=\;
    -\frac{\partial G_{\kappa}(x,y)}{\partial \mathbf{n}_x}.
\end{equation}
The sign of the jump is opposite to that in \eqref{eq:BIE}, which the
convention there absorbs.
\paragraph{Caveat: spurious resonance.}
The single-layer formulation \eqref{eq:helm-rep}--\eqref{eq:helm-kernel}
of the exterior Neumann Helmholtz problem loses unique solvability at
wavenumbers $\kappa$ for which $\kappa^2$ is an interior Dirichlet
eigenvalue of $-\Delta$ on $\Omega$, and is ill-conditioned for $\kappa$
near such a resonance. 
The combined-field formulation of Burton and
Miller~\cite{burton1971application} restores uniqueness.  We do not pursue
that extension here; all reported wavenumbers are chosen away from the
interior Dirichlet spectrum of $\Omega$.

\paragraph{Problem 3: Exterior Robin problem for the Laplace equation in $\mathbb R^4$.}\label{para:br}
On the exterior $D := \R^4 \setminus \overline\Omega$, find $\psi$
satisfying
\[
    \Delta \psi \;=\; 0 \quad \text{in } D,
    \qquad
    \frac{\partial \psi}{\partial \mathbf{n}} + \beta\,\psi \;=\; 0
        \quad \text{on } \Gam,
    \qquad
    \psi(x) \;=\; 1 + \mathcal{O}\bigl(|x|^{-2}\bigr)
        \;\text{ as } |x| \to \infty,
\]
with $\mathbf{n}$ the unit normal pointing out of $\Omega$ and
$\beta \in C(\Gam)$ the Robin coefficient.

We represent $ u = \psi -1 $ by the combined layer potential
\begin{equation}\label{eq:ring-rep}
    u(x) \;=\; \int_\Gam
    \left[
        \beta(y)\,G_{0, 4}(x,y)
        + \frac{\partial G_{0, 4}(x,y)}{\partial \mathbf{n}_y}
    \right] \rho(y)\,dS(y),
    \qquad x \in D,
\end{equation}
with the Laplace fundamental solution in $\R^4$
\[
    G_{0, 4}(x,y) \;=\; \frac{1}{4\pi^2\,|x-y|^{2}}.
\]
Green's representation of $u$ together with the jump
relations~\cite{kress2014linear} gives a second-kind BIE for the
boundary trace $\rho := \psi|_\Gam$ of the form \eqref{eq:BIE} with
right-hand side $g \equiv 1$ and kernel
\begin{equation}\label{eq:ring-kernel}
    k_{\mathrm{Lap},\R^4}(x,y)
    \;=\;-\left(
    \beta(y)\,G_{0, 4}(x,y)
    + \frac{\partial G_{0, 4}(x,y)}{\partial \mathbf{n}_y}\right).
\end{equation}

Both terms are $\mathcal O(|x-y|^{-2})$ near the diagonal.  This
singularity is integrable over a three-dimensional surface, so
$k_{\mathrm{Lap},\R^4}$ is weakly singular.  The density is real-valued,
and hence no realification is needed.  The geometric origin of the problem
and the specific ring configuration are described in
Section~\ref{sec:Examples:blackring}.

Across the numerical examples we use two representations of the density,
chosen according to how the surface is specified.  The full discretizations
are given in Appendix~\ref{app:surf-disc}; here we record only the
representation and quadrature structure needed to interpret the experiments.

\textit{Parametric representation $\rho_\theta^{\mathrm{param}}$.}

If $\Gam$ is described by a finite atlas
$\{(D_\alpha,\varphi_\alpha)\}_{\alpha=1}^{M_{\mathrm{chart}}}$,
with $\varphi_\alpha:D_\alpha\subset\R^{d-1}\to
\Gam_\alpha\subset\Gam$ and
$\Gam=\bigcup_\alpha\Gam_\alpha$, we assign one MLP to each chart and
write the resulting density representation as

\[
    \rho_\theta^{\mathrm{param}}
    \;=\; \bigl(\rho_{\theta(1)},\,\ldots,\,\rho_{\theta(M_{\mathrm{chart}})}\bigr),
    \qquad
    \rho_{\theta(\alpha)} \,:\, D_\alpha \to \C,
    \quad \alpha = 1, \ldots, M_{\mathrm{chart}},
\]

where $\theta(\alpha)\in\R^{p_\alpha}$ denotes the parameter block for
chart $\alpha$, and
$\theta=(\theta(1),\ldots,\theta(M_{\mathrm{chart}}))$.  Parentheses in
$\theta(\alpha)$ distinguish the chart index from the component index
$\theta_k$ used in Section~\ref{sec:loss}.  The corresponding surface
density is defined chartwise by

\[
    \rho(\varphi_\alpha(\xi)) \;=\; \rho_{\theta(\alpha)}(\xi),
    \qquad \xi \in D_\alpha,
\]

The data are pulled back as $g_\alpha=g\circ\varphi_\alpha$.  Each
surface quadrature node is an image
$x_j=\varphi_\alpha(\xi_j^\alpha)$ of a parameter-domain node.  With the
nodes ordered by chart, $A_N$ has an
$M_{\mathrm{chart}}\times M_{\mathrm{chart}}$ block structure: diagonal
blocks contain the singular same-chart interactions and use the corrected
quadrature, whereas off-diagonal blocks contain smooth interactions between
distinct chart images.  MLSG acts on the concatenated parameter vector, with
the residual loss accumulated over all charts.  Appendix~\ref{app:surf-disc:param}
gives the resulting discrete equations and the two-chart block system.

\textit{Volumetric (IBIM) representation $\rho_\theta^{\mathrm{IBIM}}$.}

If $\Gam$ is given as the zero level set of a signed-distance function
$d_\Gam$, we use the implicit boundary integral method (IBIM)
of~\cite{KublikTanushevTsai2013, ChenTsai2017} and represent the density by
a single ambient MLP,

\[
    \rho_\theta^{\mathrm{IBIM}} : \R^d \to \C,
\]

whose trace supplies the boundary density.  The surface nodes are the
closest-point projections $x_j=P_\Gam(z_j)$ of Cartesian grid nodes lying
in the tube
$T_\epsilon=\{x\in\R^d:|d_\Gam(x)|<\epsilon\}$; the quadrature weights
contain the regularized one-dimensional delta factor in $d_\Gam$.
Appendix~\ref{app:surf-disc:ibim} gives the tubular quadrature and the
corresponding discrete operator $K_N$.

\paragraph{Neural network architecture.}

Unless stated otherwise, we represent the density by a multilayer
perceptron with sinusoidal activations and SIREN
initialization~\cite{Sitzmann20}.  The first layer uses \begin{revision}
$\sin(\omega_0(W_0x+b_0))$\end{revision}with weights drawn from
$\mathcal U[-1/d,1/d]$, where $d$ is the input dimension; subsequent
layers use $\sin(Wx+b)$ with weights drawn from
$\mathcal U[-\sqrt{6/m},\sqrt{6/m}]$, where $m$ is the layer width.
For complex-valued densities the network has two output channels, for the
real and imaginary parts.  The depth, width, and value of $\omega_0$ are
reported for each experiment.  Unless specified otherwise, training uses
Adam~\cite{kingma2015adam} with initial learning rate $10^{-3}$, multiplied by $0.9$ every
$200$ epochs, mini-batches of $8192$ rows, and weight decay $10^{-6}$.

\paragraph{Practical loss implementation.}

The quadrature enters both the Nystr\"om operator and the outer
least-squares loss.  The source quadrature weights are retained in $A_N$, so
each collocation residual is the Nystr\"om residual defined in
\eqref{eq:res-theta}; the same target quadrature weights define the discrete
$L^2$ residual loss \eqref{eq:loss-level}.  Thus the optimization objective is simply the deterministic quadrature
discretization of the continuum residual loss.

For the parametric discretizations this is the usual surface quadrature.  For
IBIM, the weights in \eqref{eq:ibim-nodes} arise by applying the Cartesian
trapezoidal rule to the exact tubular-neighborhood representation of the
surface integral; see Appendix~\ref{app:surf-disc:ibim}.  Consequently, the
IBIM loss is simply the corresponding discretized volume integral.  The
mini-batch loss \eqref{eq:minibatch-loss} is a stochastic estimator of this
fixed deterministic quadrature loss: row sampling changes the optimization
procedure, not the underlying objective.

Retaining the quadrature weights also gives the level-independent NTK
scaling discussed in Section~\ref{sec:Algs:mg-analogy}.  In particular, for
quasi-uniform surface quadratures the weights scale as $N^{-1}$, while in
IBIM the tubular weights provide the corresponding geometric scaling.
Omitting these weights would replace the Nystr\"om-scaled tangent operator
by an unscaled Gram matrix and introduce an artificial dependence on the
number and distribution of quadrature nodes.

\paragraph{Computational facilities and timing convention.}

All neural networks are implemented in PyTorch and trained on a single
NVIDIA H200 GPU provided by the National Center for High-Performance
Computing (NCHC), National Institutes of Applied Research (NIAR), Taiwan.
The reference dense solves use \texttt{gmres()}~\cite{saad1986gmres} in MATLAB R2024b on the
NCHC Forerunner~1 system with 64 Intel Xeon Platinum 8480+ CPU cores and
275.2GB RAM; the restart parameter is $m=60$.

\subsection{Spectral bias and effective tangent-space expansion}
\label{sec:Examples:spectral}

We test the spectral picture of Sections~\ref{sec:Algs:mg-analogy}
and~\ref{sec:Algs:expansion} directly.  The diagnostics ask three questions:
whether the empirical NTK has a small leading band of effective directions,
whether refinement reveals additional directions above the effective
threshold, and how the unresolved residual is distributed relative to those
directions before and after refinement.  These are empirical questions; on
nonnested grids the effective spaces need not be algebraically nested.

For the spectral diagnostic we use the Laplace equation on a two-dimensional
flower-shaped domain with boundary parametrization

\begin{equation}\label{eq:flower}
    x(s) = A(s)\cos s,
    y(s) = A(s)\sin s,
    ~A(s) = \dfrac{\sqrt{5}}{50} + \bigl(1 + 0.1\sin(4s)\bigr),~s \in [0,2\pi).
\end{equation}

and Dirichlet data

\begin{equation}\label{eq:g}
    g\bigl(x(s),y(s)\bigr) \;=\; y(s)\bigl(1 - \sin({150} s)\bigr),
    \qquad s \in [0,2\pi).
\end{equation}

The density is represented by a single MLP with Cartesian input
$(x(s),y(s))$, $5$ hidden layers of $200$ neurons, cosine activations, and
Kaiming initialization.

The grid ladder has five levels,
$N_\ell=2^{6+\ell}$ for $\ell=1,\ldots,5$, ranging from $128$ to
$2048$ quadrature points; throughout the ladder,
$\kappa_W\approx{\color{black}1.11}$.  The stopping tolerance starts at
$1.6\times10^{-4}$ and is halved at each refinement, reaching
$10^{-5}$ on the finest level.  The multilevel runs consistently reach this
finest-level target.  By contrast, single-level training on the same
$2048$-point grid remains above the target after $20{,}000$ optimizer
iterations; see Figure~\ref{fig:single_level}.


\paragraph{Spectral expansion across levels.}

Figures~\ref{fig:ntk_multi_initial} and~\ref{fig:ntk_multi_final}
report the NTK spectra at the beginning and end of Levels~2--5, aggregated
over $20$ independent runs.  The upper rows show the spectrum of
$T_{\theta,N_\ell}$, and the lower rows show the residual coefficients
$\zeta_j$ from \eqref{eq:res-proj-coords}.  Two patterns are relevant:
refinement reveals additional directions above the effective threshold, and
a non-negligible part of the unresolved residual lies in directions that are
effective on the refined grid.  This is empirical support for the cross-level
spectral interpretation, not a consequence of the norm estimate in
Section~\ref{sec:Algs:analysis}; nor should the plotted coefficients be read
as the exact modal loss-decay rates of \eqref{eq:proj-flow}.

\begin{figure}[p]
\centering
\begin{subfigure}[t]{0.24\linewidth}
    \centering
    \includegraphics[width=\linewidth]{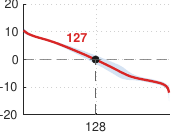}
    \caption{{Level 2}}
\end{subfigure}
\hfill
\begin{subfigure}[t]{0.24\linewidth}
    \centering
    \includegraphics[width=\linewidth]{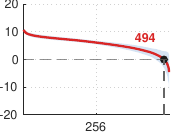}
    \caption{{Level 3}}
\end{subfigure}
\hfill
\begin{subfigure}[t]{0.24\linewidth}
    \centering
    \includegraphics[width=\linewidth]{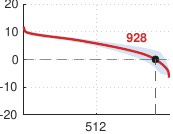}
    \caption{{Level 4}}
\end{subfigure}
\hfill
\begin{subfigure}[t]{0.24\linewidth}
    \centering
    \includegraphics[width=\linewidth]{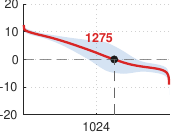}
    \caption{{Level 5}}
\end{subfigure}
\vspace{1.em}
\begin{subfigure}[t]{0.24\linewidth}
    \centering
    \includegraphics[width=\linewidth]{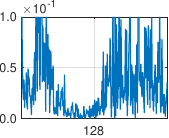}
    \caption{Level 2\\(Iteration 0)}
\end{subfigure}
\hfill
\begin{subfigure}[t]{0.24\linewidth}
    \centering
    \includegraphics[width=\linewidth]{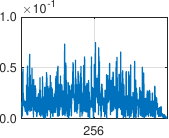}
    \caption{Level 3\\(Iteration 0)}
\end{subfigure}
\hfill
\begin{subfigure}[t]{0.24\linewidth}
    \centering
    \includegraphics[width=\linewidth]{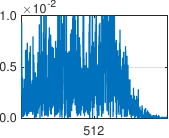}
    \caption{Level 4\\(Iteration 0)}
\end{subfigure}
\hfill
\begin{subfigure}[t]{0.24\linewidth}
    \centering
    \includegraphics[width=\linewidth]{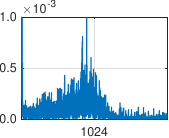}
    \caption{Level 5\\(Iteration 0)}
\end{subfigure}
\caption{Initial NTK spectral distributions at Levels~2--5,
aggregated over 20 independent runs. Top: NTK spectra. Bottom: the residual
$r_{\theta,N}$ projected onto the corresponding NTK eigenmodes.}
\label{fig:ntk_multi_initial}

\vspace{2.em}

\begin{subfigure}[t]{0.24\linewidth}
    \centering
    \includegraphics[width=\linewidth]{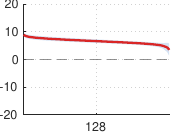}
    \caption{Level 2}
\end{subfigure}
\hfill
\begin{subfigure}[t]{0.24\linewidth}
    \centering
    \includegraphics[width=\linewidth]{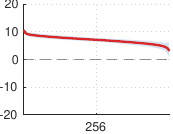}
    \caption{Level 3}
\end{subfigure}
\hfill
\begin{subfigure}[t]{0.24\linewidth}
    \centering
    \includegraphics[width=\linewidth]{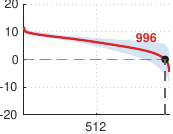}
    \caption{Level 4}
\end{subfigure}
\hfill
\begin{subfigure}[t]{0.24\linewidth}
    \centering
    \includegraphics[width=\linewidth]{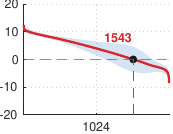}
    \caption{Level 5}
\end{subfigure}
\vspace{1.em}
\begin{subfigure}[t]{0.24\linewidth}
    \centering
    \includegraphics[width=\linewidth]{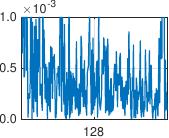}
    \caption{Level 2\\(Iteration 521)}
\end{subfigure}
\hfill
\begin{subfigure}[t]{0.24\linewidth}
    \centering
    \includegraphics[width=\linewidth]{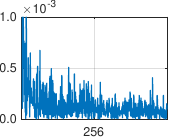}
    \caption{Level 3\\(Iteration 512)}
\end{subfigure}
\hfill
\begin{subfigure}[t]{0.24\linewidth}
    \centering
    \includegraphics[width=\linewidth]{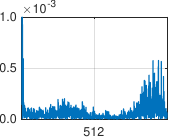}
    \caption{Level 4\\(Iteration 338)}
\end{subfigure}
\hfill
\begin{subfigure}[t]{0.24\linewidth}
    \centering
    \includegraphics[width=\linewidth]{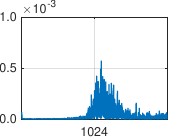}
    \caption{Level 5\\(Iteration 443)}
\end{subfigure}
\caption{NTK spectral distributions upon termination at Levels~2--5, aggregated
over 20 independent runs, under the same setup and layout as the
initial state shown in Fig.~\ref{fig:ntk_multi_initial}. Comparison of
the two reveals how the spectra and projected residuals evolve from
initialization (Iteration~0) to convergence (Iterations 521, 512,
338, and 443 for Levels~2--5, respectively). Top: NTK spectra.
Bottom: the residual $r_{\theta,N}$ projected onto the corresponding NTK
eigenmodes.}
\label{fig:ntk_multi_final}
\end{figure}
\begin{figure}[htbp]
    \centering
    \begin{subfigure}[b]{0.24\linewidth}
        \includegraphics[width=\linewidth]{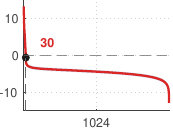}
    \end{subfigure}
    \hfill
    \begin{subfigure}[b]{0.24\linewidth}
        \includegraphics[width=\linewidth]{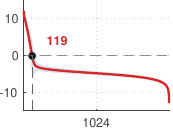}
    \end{subfigure}
    \hfill
    \begin{subfigure}[b]{0.24\linewidth}
        \includegraphics[width=\linewidth]{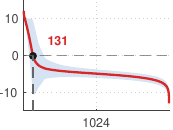}
    \end{subfigure}
    \hfill
    \begin{subfigure}[b]{0.24\linewidth}
        \includegraphics[width=\linewidth]{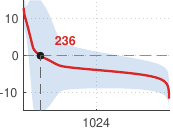}
    \end{subfigure}

    \vspace{0.5em}

    \begin{subfigure}[b]{0.24\linewidth}
        \includegraphics[width=\linewidth]{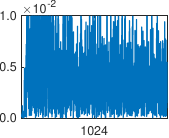}
        \caption{Iteration 1}
        \label{fig:single_level_col1}
    \end{subfigure}
    \hfill
    \begin{subfigure}[b]{0.24\linewidth}
        \includegraphics[width=\linewidth]{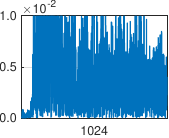}
        \caption{Iteration 1000}
        \label{fig:single_level_col2}
    \end{subfigure}
    \hfill
    \begin{subfigure}[b]{0.24\linewidth}
        \includegraphics[width=\linewidth]{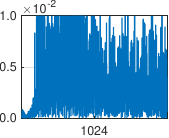}
        \caption{Iteration 2000}
        \label{fig:single_level_col3}
    \end{subfigure}
    \hfill
    \begin{subfigure}[b]{0.24\linewidth}
        \includegraphics[width=\linewidth]{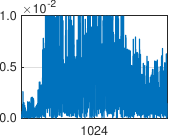}
        \caption{Iteration 20000}
        \label{fig:single_level_col4}
    \end{subfigure}

    \caption{Single-level training on the finest grid: NTK spectra at iterations 1, 1000, 2000, and the final step. Top: NTK spectra. Bottom: the residual $r_{\theta,N}$ projected onto the corresponding NTK eigenmodes. Loss at iteration 20,000 is $2.37\times 10^{-1}$.}
    \label{fig:single_level}
\end{figure}

\paragraph{Progressive versus cyclic schedules.}

For this schedule-comparison study we use the four-level ladder
$N=256,512,1024,2048$.  Table~\ref{tab:1d_cyclic_progressive} compares the
two visiting schedules of Section~\ref{sec:Algs:smt} over $50$ independent
runs. The two schedules reach comparable final effective tangent spaces.  Their
main difference is the optimization effort required to do so: the cyclic
schedule has a markedly smaller run-to-run spread in the required epoch
count.

\begin{table}[t]
\centering
\caption{{Epoch counts by training level for the progressive and cyclic schedules over 50 independent runs.  The first three levels ($N=256,512,1024$) are shared.  For the cyclic schedule, the revisited levels~3 and~4 are shown for three prescribed epoch budgets at the first visit to level~4 (150, 200, and 500 epochs; light to dark blue).  The progressive schedule is shown in orange.  Boxes span the interquartile range with the median marked; whiskers extend to $1.5\times\mathrm{IQR}$, with more extreme values shown as outliers.}}
\label{tab:1d_cyclic_progressive}
\medskip
\begin{tabular}{c r @{\hspace{2em}} r @{\hspace{4em}} c}
\toprule
Level & $N$ & $\varepsilon$ & \#\,epochs \\
\cmidrule(lr){4-4}
 & & & \begin{tikzpicture}[x=8cm, y=1ex, baseline=0.5ex]\draw[axisline] (0,0) -- (1,0);\draw[axisline] (0.1250,0) -- (0.1250,0.8);\node[tick] at (0.1250,2.6) {1000};\draw[axisline] (0.2500,0) -- (0.2500,0.8);\node[tick] at (0.2500,2.6) {2000};\draw[axisline] (0.3750,0) -- (0.3750,0.8);\node[tick] at (0.3750,2.6) {3000};\draw[axisline] (0.5000,0) -- (0.5000,0.8);\node[tick] at (0.5000,2.6) {4000};\draw[axisline] (0.6250,0) -- (0.6250,0.8);\node[tick] at (0.6250,2.6) {5000};\draw[axisline] (0.7500,0) -- (0.7500,0.8);\node[tick] at (0.7500,2.6) {6000};\draw[axisline] (0.8750,0) -- (0.8750,0.8);\node[tick] at (0.8750,2.6) {7000};\draw[axisline] (1.0000,0) -- (1.0000,0.8);\node[tick] at (1.0000,2.6) {8000};\end{tikzpicture} \\[2pt]
\midrule
\multicolumn{4}{c}{\textbf{(a) Shared levels}} \\
\midrule
1 & 256 & $10^{-2}$ & \begin{tikzpicture}[x=8cm, y=1ex, baseline=-0.5ex]\draw[axisline] (0,0) -- (1,0);\draw[bxp] (0.0842,0) -- (0.1202,0);\draw[bxp] (0.1488,0) -- (0.1909,0);\draw[bxp] (0.0842,-0.7) -- (0.0842,0.7);\draw[bxp] (0.1909,-0.7) -- (0.1909,0.7);\filldraw[boxfill,draw=black,line width=0.5pt] (0.1202,-1.1) rectangle (0.1488,1.1);\draw[medline] (0.1281,-1.1) -- (0.1281,1.1);\draw[outl,boxfill] (0.0744,0) circle (1.4pt);\draw[outl,boxfill] (0.2162,0) circle (1.4pt);\draw[outl,boxfill] (0.2369,0) circle (1.4pt);\end{tikzpicture} \\[1pt]
2 & 512 & $10^{-5}$ & \begin{tikzpicture}[x=8cm, y=1ex, baseline=-0.5ex]\draw[axisline] (0,0) -- (1,0);\draw[bxp] (0.0162,0) -- (0.0262,0);\draw[bxp] (0.0462,0) -- (0.0690,0);\draw[bxp] (0.0162,-0.7) -- (0.0162,0.7);\draw[bxp] (0.0690,-0.7) -- (0.0690,0.7);\filldraw[boxfill,draw=black,line width=0.5pt] (0.0262,-1.1) rectangle (0.0462,1.1);\draw[medline] (0.0369,-1.1) -- (0.0369,1.1);\draw[outl,boxfill] (0.0916,0) circle (1.4pt);\end{tikzpicture} \\[1pt]
3 & 1\,024 & $10^{-5}$ & \begin{tikzpicture}[x=8cm, y=1ex, baseline=-0.5ex]\draw[axisline] (0,0) -- (1,0);\draw[bxp] (0.0303,0) -- (0.0414,0);\draw[bxp] (0.0613,0) -- (0.0903,0);\draw[bxp] (0.0303,-0.7) -- (0.0303,0.7);\draw[bxp] (0.0903,-0.7) -- (0.0903,0.7);\filldraw[boxfill,draw=black,line width=0.5pt] (0.0414,-1.1) rectangle (0.0613,1.1);\draw[medline] (0.0514,-1.1) -- (0.0514,1.1);\draw[outl,boxfill] (0.0992,0) circle (1.4pt);\draw[outl,boxfill] (0.0995,0) circle (1.4pt);\draw[outl,boxfill] (0.1082,0) circle (1.4pt);\end{tikzpicture} \\[1pt]
\midrule
\multicolumn{4}{c}{\textbf{(b) Cyclic schedule}} \\
\midrule
4 & 2\,048 & -- & \begin{tikzpicture}[x=8cm, y=1ex, baseline=-0.5ex]\draw[axisline] (0,2.9) -- (1,2.9);\draw[axisline] (0,0) -- (1,0);\draw[axisline] (0,-2.9) -- (1,-2.9);\draw[bxp] (0.0188,2.9) -- (0.0188,2.9);\draw[bxp] (0.0188,2.9) -- (0.0188,2.9);\draw[bxp] (0.0188,2.2) -- (0.0188,3.6);\draw[bxp] (0.0188,2.2) -- (0.0188,3.6);\filldraw[lbluefill,draw=black,line width=0.5pt] (0.0188,1.8) rectangle (0.0188,4);\draw[medline] (0.0188,1.8) -- (0.0188,4);\draw[bxp] (0.0250,0) -- (0.0250,0);\draw[bxp] (0.0250,0) -- (0.0250,0);\draw[bxp] (0.0250,-0.7) -- (0.0250,0.7);\draw[bxp] (0.0250,-0.7) -- (0.0250,0.7);\filldraw[mbluefill,draw=black,line width=0.5pt] (0.0250,-1.1) rectangle (0.0250,1.1);\draw[medline] (0.0250,-1.1) -- (0.0250,1.1);\draw[bxp] (0.0625,-2.9) -- (0.0625,-2.9);\draw[bxp] (0.0625,-2.9) -- (0.0625,-2.9);\draw[bxp] (0.0625,-3.6) -- (0.0625,-2.2);\draw[bxp] (0.0625,-3.6) -- (0.0625,-2.2);\filldraw[dbluefill,draw=black,line width=0.5pt] (0.0625,-4) rectangle (0.0625,-1.8);\draw[medline] (0.0625,-4) -- (0.0625,-1.8);\end{tikzpicture} \\[10pt]
\textbf{3} & 1\,024 & $10^{-6}$ & \begin{tikzpicture}[x=8cm, y=1ex, baseline=-0.5ex]\draw[axisline] (0,2.9) -- (1,2.9);\draw[axisline] (0,0) -- (1,0);\draw[axisline] (0,-2.9) -- (1,-2.9);\draw[bxp] (0.0766,2.9) -- (0.1255,2.9);\draw[bxp] (0.1860,2.9) -- (0.2684,2.9);\draw[bxp] (0.0766,2.2) -- (0.0766,3.6);\draw[bxp] (0.2684,2.2) -- (0.2684,3.6);\filldraw[lbluefill,draw=black,line width=0.5pt] (0.1255,1.8) rectangle (0.1860,4);\draw[medline] (0.1341,1.8) -- (0.1341,4);\draw[outl,lbluefill] (0.3765,2.9) circle (1.4pt);\draw[bxp] (0.0736,0) -- (0.1241,0);\draw[bxp] (0.1815,0) -- (0.2560,0);\draw[bxp] (0.0736,-0.7) -- (0.0736,0.7);\draw[bxp] (0.2560,-0.7) -- (0.2560,0.7);\filldraw[mbluefill,draw=black,line width=0.5pt] (0.1241,-1.1) rectangle (0.1815,1.1);\draw[medline] (0.1323,-1.1) -- (0.1323,1.1);\draw[outl,mbluefill] (0.2681,0) circle (1.4pt);\draw[outl,mbluefill] (0.3791,0) circle (1.4pt);\draw[bxp] (0.0725,-2.9) -- (0.1100,-2.9);\draw[bxp] (0.1578,-2.9) -- (0.2080,-2.9);\draw[bxp] (0.0725,-3.6) -- (0.0725,-2.2);\draw[bxp] (0.2080,-3.6) -- (0.2080,-2.2);\filldraw[dbluefill,draw=black,line width=0.5pt] (0.1100,-4) rectangle (0.1578,-1.8);\draw[medline] (0.1235,-4) -- (0.1235,-1.8);\draw[outl,dbluefill] (0.2359,-2.9) circle (1.4pt);\draw[outl,dbluefill] (0.2491,-2.9) circle (1.4pt);\draw[outl,dbluefill] (0.2504,-2.9) circle (1.4pt);\draw[outl,dbluefill] (0.2504,-2.9) circle (1.4pt);\draw[outl,dbluefill] (0.2543,-2.9) circle (1.4pt);\draw[outl,dbluefill] (0.2581,-2.9) circle (1.4pt);\draw[outl,dbluefill] (0.3046,-2.9) circle (1.4pt);\end{tikzpicture} \\[10pt]
\textbf{4} & 2\,048 & $5\times10^{-7}$ & \begin{tikzpicture}[x=8cm, y=1ex, baseline=-0.5ex]\draw[axisline] (0,2.9) -- (1,2.9);\draw[axisline] (0,0) -- (1,0);\draw[axisline] (0,-2.9) -- (1,-2.9);\draw[bxp] (0.2504,2.9) -- (0.3569,2.9);\draw[bxp] (0.4383,2.9) -- (0.5549,2.9);\draw[bxp] (0.2504,2.2) -- (0.2504,3.6);\draw[bxp] (0.5549,2.2) -- (0.5549,3.6);\filldraw[lbluefill,draw=black,line width=0.5pt] (0.3569,1.8) rectangle (0.4383,4);\draw[medline] (0.4030,1.8) -- (0.4030,4);\draw[outl,lbluefill] (0.5865,2.9) circle (1.4pt);\draw[outl,lbluefill] (0.6057,2.9) circle (1.4pt);\draw[outl,lbluefill] (0.6208,2.9) circle (1.4pt);\draw[bxp] (0.2429,0) -- (0.3573,0);\draw[bxp] (0.4374,0) -- (0.5403,0);\draw[bxp] (0.2429,-0.7) -- (0.2429,0.7);\draw[bxp] (0.5403,-0.7) -- (0.5403,0.7);\filldraw[mbluefill,draw=black,line width=0.5pt] (0.3573,-1.1) rectangle (0.4374,1.1);\draw[medline] (0.3990,-1.1) -- (0.3990,1.1);\draw[outl,mbluefill] (0.5944,0) circle (1.4pt);\draw[outl,mbluefill] (0.6060,0) circle (1.4pt);\draw[outl,mbluefill] (0.6222,0) circle (1.4pt);\draw[bxp] (0.2310,-2.9) -- (0.3491,-2.9);\draw[bxp] (0.4337,-2.9) -- (0.5317,-2.9);\draw[bxp] (0.2310,-3.6) -- (0.2310,-2.2);\draw[bxp] (0.5317,-3.6) -- (0.5317,-2.2);\filldraw[dbluefill,draw=black,line width=0.5pt] (0.3491,-4) rectangle (0.4337,-1.8);\draw[medline] (0.3873,-4) -- (0.3873,-1.8);\draw[outl,dbluefill] (0.5772,-2.9) circle (1.4pt);\draw[outl,dbluefill] (0.5997,-2.9) circle (1.4pt);\draw[outl,dbluefill] (0.6235,-2.9) circle (1.4pt);\end{tikzpicture} \\[10pt]
\midrule
\multicolumn{4}{c}{\textbf{(c) Progressive schedule}} \\
\midrule
4 & 2\,048 & $5\times10^{-7}$ & \begin{tikzpicture}[x=8cm, y=1ex, baseline=-0.5ex]\draw[axisline] (0,0) -- (1,0);\draw[bxp] (0.2910,0) -- (0.4548,0);\draw[bxp] (0.5759,0) -- (0.7468,0);\draw[bxp] (0.2910,-0.7) -- (0.2910,0.7);\draw[bxp] (0.7468,-0.7) -- (0.7468,0.7);\filldraw[orangefill,draw=black,line width=0.5pt] (0.4548,-1.1) rectangle (0.5759,1.1);\draw[medline] (0.5012,-1.1) -- (0.5012,1.1);\draw[outl,orangefill] (0.7809,0) circle (1.4pt);\draw[outl,orangefill] (0.7967,0) circle (1.4pt);\draw[outl,orangefill] (0.9470,0) circle (1.4pt);\end{tikzpicture} \\[1pt]
\addlinespace[3pt]
\midrule
\multicolumn{4}{@{}l@{}}{\makebox[70mm][l]{\textbf{Final-level median (cyclic 150/200/500)}}\textbf{3224 / 3192 / 3099}} \\
\multicolumn{4}{@{}l@{}}{\makebox[70mm][l]{\textbf{Final-level median (progressive)}}\textbf{4010}} \\
\addlinespace[2pt]
\multicolumn{4}{c}{\begin{tikzpicture}[x=1cm, y=1ex, baseline=-0.5ex]\filldraw[boxfill,draw=black,line width=0.4pt] (0.00,-0.8) rectangle (0.28,0.8);\node[leg,anchor=west] at (0.36,0) {shared};\filldraw[lbluefill,draw=black,line width=0.4pt] (1.61,-0.8) rectangle (1.89,0.8);\node[leg,anchor=west] at (1.97,0) {cyclic (L4, 150)};\filldraw[mbluefill,draw=black,line width=0.4pt] (4.77,-0.8) rectangle (5.05,0.8);\node[leg,anchor=west] at (5.13,0) {cyclic (L4, 200)};\filldraw[dbluefill,draw=black,line width=0.4pt] (7.93,-0.8) rectangle (8.21,0.8);\node[leg,anchor=west] at (8.29,0) {cyclic (L4, 500)};\filldraw[orangefill,draw=black,line width=0.4pt] (11.09,-0.8) rectangle (11.37,0.8);\node[leg,anchor=west] at (11.45,0) {progressive};\end{tikzpicture}} \\
\bottomrule
\end{tabular}
\end{table}
\subsection{Poisson's equation in three dimensions}
\label{sec:Examples:poisson}

We next solve a Poisson problem on the Stanford bunny, a nonconvex
three-dimensional surface with regions of high curvature.  The geometry is
specified nonparametrically through a signed-distance representation, and
the density is represented volumetrically using the IBIM formulation of
Appendix~\ref{app:surf-disc:ibim}.  This example tests the method on a
surface for which a convenient global parametrization is unavailable.

We use IBIM to solve
\begin{equation}\label{eq:bunny-prob}
    \Delta u \;=\; 1 \quad \text{in } \Omega,
    \qquad
    u \;=\; \tfrac{1}{6}\bigl(x^2+y^2+z^2\bigr) \quad \text{on } \partial\Omega,
\end{equation}
whose exact solution is the boundary datum extended into $\Omega$, so
the relative $L^2$ error in the reconstructed $u$ admits a closed-form
reference.

The geometry is supplied as a dense point cloud.  On Cartesian nodes near
the point cloud, local piecewise-quadratic interpolation provides the
distance values and Jacobian factors required by IBIM.  The density network
has $10$ hidden layers of $500$ neurons and uses SIREN initialization.

Table~\ref{tab:bunny_training_strategy} reports the multilevel training
statistics over $20$ independent runs.  We also compare the learned
solution with the GMRES reference.  Both are evaluated using the squared
relative $L^2$ error with target $10^{-3}$, while
Figure~\ref{fig:bunny_results} shows the surface densities and pointwise
errors at sampled interior points.

\begin{table}[!h]
    \centering
    \caption{{MLSG training statistics for the Stanford bunny example under the progressive schedule.  Means and standard deviations of the epoch counts and wall-clock times are computed over 20 runs; the Total row reports statistics of the per-run totals.}}
    \label{tab:bunny_training_strategy}\medskip
    \begin{tabular}{c r r c r r r r}
        \toprule
         & & & & \multicolumn{2}{c}{Epochs} & \multicolumn{2}{c}{Time (sec)} \\
        \cmidrule(lr){5-6}\cmidrule(lr){7-8}
        Level & Grid & Problem size & Target loss & Mean & Std & Mean & Std \\
        \midrule
        1 & $40^3$  &  10\,000 & $8.0\times10^{-4}$ & $51.10$ & $3.78$  & $1.23$  & $0.26$ \\
        2 & $64^3$  &  26\,082 & $4.0\times10^{-4}$ & $53.40$ & $23.53$ & $5.61$  & $2.47$ \\
        3 & $81^3$  &  41\,974 & $2.0\times10^{-4}$ & $22.10$ & $4.86$  & $5.24$  & $1.16$ \\
        4 & $102^3$ &  66\,664 & $2.0\times10^{-4}$ & $16.45$ & $1.12$  & $9.16$  & $0.62$ \\
        5 & $128^3$ & 104\,663 & $1.5\times10^{-4}$ & $14.50$ & $0.74$  & $18.66$ & $0.95$ \\
        6 & $161^3$ & 164\,198 & $1.5\times10^{-4}$ & $10.45$ & $0.74$  & $32.78$ & $2.32$ \\
        7 & $203^3$ & 256\,242 & $1.5\times10^{-4}$ & $7.70$  & $0.46$  & $57.35$ & $3.43$ \\
        8 & $256^3$ & 394\,457 & $1.5\times10^{-4}$ & $5.70$  & $0.46$  & $99.36$ & $7.97$ \\
        \midrule
        \multicolumn{4}{l}{\textbf{Total}}
          & & & $\mathbf{229.38}$ & $\mathbf{12.06}$ \\
        \bottomrule
    \end{tabular}
\end{table}

\begin{figure}[!h]
    \centering
    \begin{subfigure}{0.22\linewidth}
        \centering
        \includegraphics[width=\linewidth]{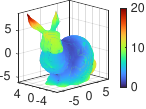}
        \caption{$\rho_\theta$ \\
        (MLSG, IBIM)}
        \label{fig:bunny_nn}
    \end{subfigure}
    \hfill
    \begin{subfigure}{0.22\linewidth}
        \centering
        \includegraphics[width=\linewidth]{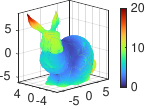}
        \caption{$\rNstar$ \\
        (GMRES, IBIM)}
        \label{fig:bunny_gmres}
    \end{subfigure}
    \hfill
    \begin{subfigure}{0.22\linewidth}
        \centering
        \includegraphics[width=\linewidth]{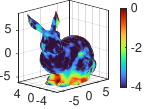}
        \caption{Error in $u$ \\
        (from NN)}
        \label{fig:bunny_nn_err}
    \end{subfigure}
    \hfill
    \begin{subfigure}{0.22\linewidth}
        \centering
        \includegraphics[width=\linewidth]{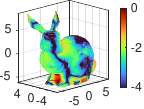}
        \caption{Error in $u$ \\(from GMRES)}
        \label{fig:bunny_gmres_err}
    \end{subfigure}
    \caption{{Stanford bunny example.  The first two panels show the surface densities obtained by MLSG and GMRES; the last two show the corresponding pointwise errors in the reconstructed field at sampled interior points, on a logarithmic scale.}}
    \label{fig:bunny_results}
\end{figure}

\subsection{Exterior Helmholtz problem in three dimensions}
\label{sec:Examples:helmholtz}

We next consider exterior Neumann Helmholtz problems over a range of
wavenumbers and geometries.  The multiple-sphere benchmark provides the
main wall-clock comparison with GMRES, while the coiled torus, linked tori,
and genus-three surface test the method on nonconvex and nontrivial
topologies.  These examples also examine progressive and cyclic training
schedules.

\subsubsection{Scattering by multiple spheres}
\label{sec:Examples:helmholtz:spheres}

The multiple-sphere geometry is a standard three-dimensional Helmholtz
benchmark.  
We analyze the runtime of the proposed approach across various resolutions and wavenumbers for a four-sphere configuration.
Here $n$ denotes the Cartesian
grid resolution in each coordinate direction and $N$ the resulting number
of boundary quadrature points, hence the dimension of the discrete BIE.

Each scatterer is a radius-$2$ ball placed inside the cubic
computational box $[-10,10]^3$; configurations are arranged so that
the shortest surface-to-surface distance between spheres is $2.57$.

The incident field is the plane wave
$u^{\mathrm{inc}}=\exp(i\kappa y)$.  The Neumann data on
$\bigcup_j\partial B_j$ are the negative normal derivative of this field,
and \eqref{eq:helm-rep}--\eqref{eq:helm-kernel} apply on each sphere;
off-diagonal blocks of $K_N$ account for inter-sphere coupling. All test wavenumbers $\kappa$ are chosen away
from the interior Dirichlet spectrum of $\bigcup_j B_j$.

The network has $5$ hidden layers of $200$ neurons with SIREN activations
($\omega_0=1$), and mini-batches contain $1024$ rows.  The grid ladder has
$\ell_{\mathrm F}=7$ levels when {\color{black}$n=256$} and
$\ell_{\mathrm F}=9$ when {\color{black}$n=512$}; the per-coordinate
resolution grows by a factor of $\sqrt2$ between successive levels, so the
number of boundary quadrature points approximately doubles.  The learning
rate is $10^{-3}$ on the first two levels and $10^{-4}$ thereafter.



\begin{figure}[t]
    \centering

    \includegraphics{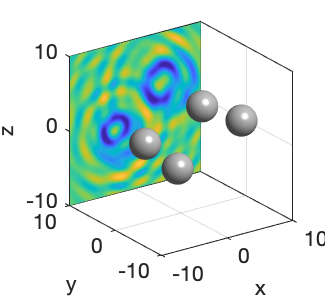}
    \qquad
    \includegraphics{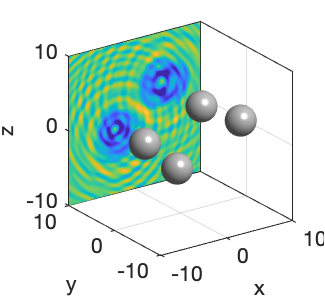}

    \caption{{Reconstructed total field $|u+u^{\mathrm{inc}}|$ for the four-sphere problem.  Left: $n=256$, $\kappa=4$.  Right: $n=512$, $\kappa=8$.}}
    \label{fig:helmholtz_grouped}
\end{figure}

\paragraph{Scaling with grid resolution and wavenumber.}

For the four-sphere configuration, we vary the per-coordinate grid
resolution $n\in\{256,512\}$ and the wavenumber $\kappa$ as listed in
Table~\ref{tab:helmholtz_result}.  The resulting BIE systems range from
$131\,030$ to $525\,182$ boundary unknowns.



\begin{table}[t]
\centering
\caption{{Implementation-level wall-clock comparison for the
four-sphere Helmholtz benchmark at matched accuracy. MLSG uses one NVIDIA H200
GPU; the dense MATLAB GMRES reference uses 64 Intel Xeon Platinum 8480+ CPU
cores. The MLSG mean and standard deviation are reported over 20 trials. The
final column is the ratio of these measured wall-clock times and is not a
hardware-normalized algorithmic speedup.}}
\label{tab:helmholtz_result}
\medskip
\small
\renewcommand{\arraystretch}{1.03}
\setlength{\tabcolsep}{7pt}

\begin{tabular}{r r r r r r r}
\toprule
\multicolumn{7}{c}{\textbf{(a) GMRES}} \\
\addlinespace[2pt]
$h$ & $N$ & $\kappa$
& \multicolumn{2}{r}{Time~($t_{G}$ in sec)}
& Relative error (squared)
& \\
\midrule
0.078 & 131\,030 & 1
& \multicolumn{2}{r}{3\,194.60}
& $3.33\times10^{-5}$ & \\
& & 2
& \multicolumn{2}{r}{5\,126.90}
& $3.63\times10^{-5}$ & \\
& & 3
& \multicolumn{2}{r}{9\,871.70}
& $9.83\times10^{-5}$ & \\
& & 4
& \multicolumn{2}{r}{12\,080.78}
& $6.79\times10^{-5}$ & \\
\cmidrule(lr){1-7}
0.039 & 525\,182 & 1
& \multicolumn{2}{r}{18\,949.67}
& $3.33\times10^{-5}$ & \\
& & 3
& \multicolumn{2}{r}{70\,434.94}
& $9.82\times10^{-5}$ & \\
& & 5
& \multicolumn{2}{r}{56\,394.54}
& $6.08\times10^{-5}$ & \\
& & 7
& \multicolumn{2}{r}{259\,602.63}
& $9.67\times10^{-5}$ & \\
& & 8
& \multicolumn{2}{r}{--}
& -- & \\

\midrule

\addlinespace[2pt]
\multicolumn{7}{c}{\textbf{(b) MLSG}} \\
\addlinespace[2pt]
$h$ & $N$ & $\kappa$
& Mean ($t_{ML}$ in sec)
& Std (sec)
& Relative error (squared)
&  Ratio ($t_{G}/t_{ML}$)\\
\midrule
0.078 & 131\,030 & 1
& $\mathbf{31.76}$ & $\mathbf{3.55}$
& $\mathbf{(7.55\pm1.21)\times10^{-5}}$
& 100.58 \\
& & 2
& $\mathbf{24.69}$ & $\mathbf{2.43}$
& $\mathbf{(7.03\pm1.60)\times10^{-5}}$
& 207.67 \\
& & 3
& $\mathbf{35.63}$ & $\mathbf{3.79}$
& $\mathbf{(7.04\pm0.80)\times10^{-5}}$
& 277.07 \\
& & 4
& $\mathbf{59.19}$ & $\mathbf{3.28}$
& $\mathbf{(9.09\pm0.68)\times10^{-5}}$
& 204.10 \\
\cmidrule(lr){1-7}
0.039 & 525\,182 & 1
& $\mathbf{176.61}$ & $\mathbf{19.70}$
& $\mathbf{(2.27\pm0.54)\times10^{-5}}$
& 107.29 \\
& & 3
& $\mathbf{113.27}$ & $\mathbf{1.65}$
& $\mathbf{(4.26\pm0.40)\times10^{-5}}$
& 621.82 \\
& & 5
& $\mathbf{156.80}$ & $\mathbf{29.37}$
& $\mathbf{(5.18\pm2.21)\times10^{-5}}$
& 359.66 \\
& & 7
& $\mathbf{3\,948.78}$ & $\mathbf{151.16}$
&  $\mathbf{(8.46\pm1.91)\times10^{-5}}$
& 65.74 \\
& & 8
& $\mathbf{2\,248.81}$ & $\mathbf{236.40}$
& $\mathbf{(4.42\pm1.03)\times10^{-5}}$ 
& -- \\
\bottomrule
\end{tabular}
\end{table}

Table~\ref{tab:helmholtz_result} contrasts the two implementations under identical stopping criteria: the GMRES solver terminates when $\texttt{reltol}^2<10^{-4}$, while MLSG halts once the training error falls below $10^{-4}$.  Table 4 reports the measured wall-clock ratio. Under the tested hardware and software configurations, MLSG is substantially faster than the dense MATLAB GMRES reference for MLSG over the dense-GMRES
reference on the tested systems and shows that the mini-batched workload maps
efficiently to one H200 GPU. In particular, for $N = 525\,182$ and wavenumber $\kappa = 7$, the conventional GMRES solver requires over $3$ days to return the solution, even for higher wavenumber cases. This comparison indicates that our approach can be considered an alternative means of obtaining an initial understanding of a dense linear system. Subsequently, a more accurate solution can be achieved by using the neural network’s output as the initialization for the GMRES solver.

Figure~\ref{fig:helmholtz_grouped} shows representative slices of the
reconstructed total field.  When accuracy beyond the MLSG tolerance is
required, the learned density can also be used as an initial guess for a
conventional iterative solver.


\subsubsection{Scattering by nonconvex surfaces}
\label{sec:Examples:helmholtz:complex}

\paragraph{Coiled torus.}

Our first nonconvex geometry is a coiled torus: a circular tube whose
centerline winds six times around a larger circle.  The resulting smooth
surface is parametrized, for $(t,s)\in[0,2\pi]^2$, by
\begin{equation}\label{eq:coil-surface}
    \mathbf X(t,s)
    =
    \mathbf c(t)
    +
    r_{\mathrm{tube}}
    \bigl(
        \cos s\,\mathbf N(t)
        +
        \sin s\,\mathbf B(t)
    \bigr),
\end{equation}
where
\begin{equation*}
    \mathbf c(t)
    =
    \bigl(
        (5+0.9\cos 6t)\cos t,\,
        (5+0.9\cos 6t)\sin t,\,
        0.9\sin 6t
    \bigr),
\end{equation*}
$r_{\mathrm{tube}}=0.8$, and
\begin{equation*}
    \mathbf N(t)
    =
    \bigl(
        \cos 6t\cos t,\,
        \cos 6t\sin t,\,
        \sin 6t
    \bigr),
    \qquad
    \mathbf B(t)
    =
    \widehat{\mathbf T}(t)\times\mathbf N(t),
    \qquad
    \widehat{\mathbf T}(t)
    =
    \frac{\mathbf c'(t)}{\lvert\mathbf c'(t)\rvert}.
\end{equation*}

We rigidly rotate the surface so that the axis of the large circle is
aligned with $(3,4,5)/\sqrt{50}$, avoiding alignment with the coordinate
planes.  The incident field is
$u^{\mathrm{inc}}=\exp(i\kappa y)$.  A single doubly periodic chart covers
the surface, so one network module represents the density.  Dividing each
angular variable into {\color{black}$n$} intervals gives
{\color{black}$N=n^2$} quadrature nodes and a system
$A_N\in\mathbb C^{{\color{black}N\times N}}$.  The network has $10$
hidden layers of $500$ neurons and uses SIREN initialization.

Figure~\ref{fig:coil} shows the computed density and reconstructed total
field for $\kappa=6$ and {\color{black}$n=1024$}, corresponding to
{\color{black}$N=n^2=1\,048\,576$} surface quadrature nodes.  The MLSG
wall-clock time is $9979.1\,\mathrm{s}$.

\begin{figure}[h]
    \centering
    \begin{subfigure}[t]{0.4\linewidth}
        \centering
        \includegraphics[width=\linewidth]
        {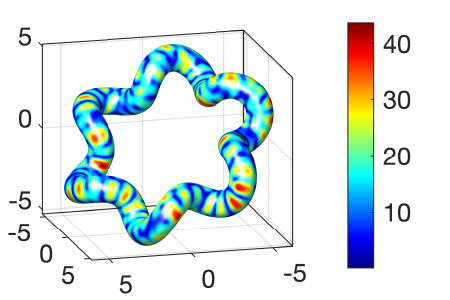}
        \caption{Magnitude of the computed density,
        $|\rho_\theta^{\mathrm{param}}|$.}
        \label{fig:coil_density}
    \end{subfigure}
    \hspace{2em}
    \begin{subfigure}[t]{0.4\linewidth}
        \centering
        \includegraphics[width=\linewidth]
        {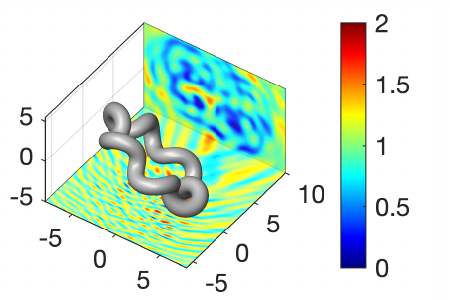}
        \caption{Magnitude of the total field,
        $|u+u^{\mathrm{inc}}|$, on a constant-$z$ plane.}
        \label{fig:coil_zslice}
    \end{subfigure}
    \caption{ The total field satisfies a homogeneous Neumann boundary
    condition, and the incident field is
    $u^{\mathrm{inc}}=\exp(i\kappa y)$.}
    \label{fig:coil}
\end{figure}


\paragraph{Linked tori.}
We next consider two congruent tori linked into a chain.  With major radius
$R=4$ and minor radius $r=1$, the two surfaces are parametrized by
\begin{eqnarray*}
    \mathbf X_1(t,s)
    &=&
    \left(
    (R+r\cos s)\cos t,\;
    r\sin s,\;
    (R+r\cos s)\sin t
    \right),\\
    \mathbf X_2(t,s)
    &=&
    \left(
    (R+r\cos s)\cos t+R,\;
    (R+r\cos s)\sin t,\;
    r\sin s
    \right),
    ~~ 0\leq t,s<2\pi ,
\end{eqnarray*}

The configuration lies in $[-20,20]^3$ and is illuminated by the plane wave
$u^{\mathrm{inc}}=\exp(i\kappa y)$.  Each torus is covered by one doubly
periodic chart, so the parametric density has two network modules,
$\rho_\theta^{\mathrm{param}}=(\rho_{\theta(1)},\rho_{\theta(2)})$.
Partitioning each angular variable into $n$ subintervals produces the
surface quadrature.

Discretizing the BIE yields a linear system in block form,
\begin{eqnarray*}
\begin{bmatrix}
A_{11} & A_{12} \\
A_{21} & A_{22}
\end{bmatrix}
\begin{bmatrix}
\rho_{1, N} \\
\rho_{2, N}
\end{bmatrix}
=
\begin{bmatrix}
f_{1} \\
f_{2}
\end{bmatrix},
\end{eqnarray*}

The diagonal blocks $A_{ii}$ describe self-interaction on each torus, while
the off-diagonal blocks $A_{ij}$, $i\ne j$, couple the two components; $f_i$
is the boundary data on the $i$th torus.  The module $\rho_{\theta(i)}$
represents the corresponding discrete density.  To provide sufficient
capacity for this coupled geometry, each module uses $10$ hidden layers of
$500$ neurons.

Table~\ref{tab:torii_training} summarizes the multilevel training, and
Figure~\ref{fig:tori} shows the learned surface density and reconstructed
total field on representative cross-sections.  The plotted quantities are
shown in absolute value.

\begin{table}[h]
\centering
\caption{{MLSG training statistics for the linked-tori example under the progressive schedule.  Means and standard deviations of the epoch counts and wall-clock times are computed over 20 trials.}}\label{tab:torii_training}\medskip
\begin{tabular}{c r r r r r r}
\toprule
 & & & \multicolumn{2}{c}{\# epochs} & \multicolumn{2}{c}{Time (sec)} \\
\cmidrule(lr){4-5}\cmidrule(lr){6-7}
Level & Problem size & Target loss & Mean & Std & Mean & Std \\
\midrule
1 & $1\,024 \times 2$  & $4\times10^{-4}$ & $570.30$    & $25.97$ & $3.59$       & $0.44$  \\
2 & $1\,600 \times 2$  & $4\times10^{-4}$ & $687.40$    & $34.09$ & $4.56$       & $0.26$  \\
3 & $2\,500 \times 2$  & $4\times10^{-4}$ & $523.55$    & $32.46$ & $4.54$       & $0.29$  \\
4 & $4\,096 \times 2$  & $2\times10^{-4}$ & $878.00$    & $45.10$ & $12.98$      & $0.68$  \\
5 & $6\,561 \times 2$  & $2\times10^{-4}$ & $889.60$    & $54.76$ & $26.92$      & $1.66$  \\
6 & $10\,404 \times 2$ & $2\times10^{-4}$ & $552.45$    & $45.85$ & $44.16$      & $3.69$  \\
7 & $16\,384 \times 2$ & $2\times10^{-4}$ & $414.90$    & $23.41$ & $70.00$      & $3.97$  \\
8 & $25\,921 \times 2$ & $2\times10^{-4}$ & $891.05$    & $60.00$ & $399.13$     & $26.97$ \\
9 & $41\,209 \times 2$ & $1\times10^{-4}$ & $1\,204.00$ & $76.58$ & $1\,310.11$ & $83.38$ \\
10 & $65\,536 \times 2$ & $1\times10^{-4}$ & $297.25$    & $28.50$ & $785.59$     & $75.34$ \\
\midrule
\multicolumn{3}{l}{\textbf{Total}}
& $\mathbf{6\,908.50}$ & $\mathbf{276.39}$
& $\mathbf{2\,661.57}$ & $\mathbf{144.71}$ \\
\bottomrule
\end{tabular}
\end{table}
\begin{figure}[h]
    \centering
    \begin{subfigure}[t]{0.27\linewidth}
        \centering        \includegraphics[width = \textwidth]{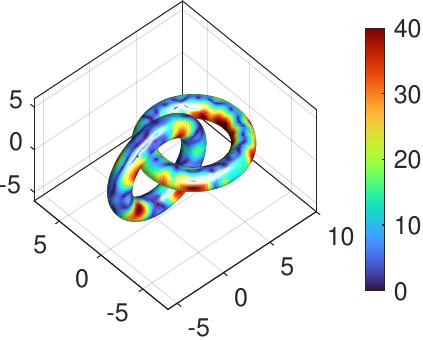}
        \caption{Computed density $|\rho_\theta^{\mathrm{param}}|$.}
        \label{fig:tori_density}
    \end{subfigure}
    \hfill
    \begin{subfigure}[t]{0.27\linewidth}
        \centering
        \includegraphics[width = \textwidth]{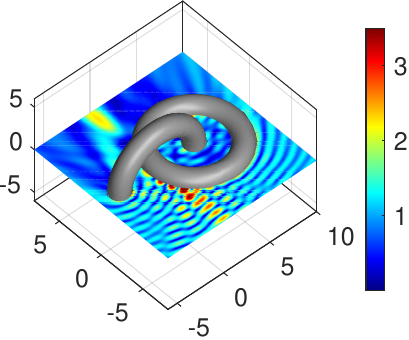}
        \caption{Total field $|u+u^{\mathrm{inc}}|$ on a constant-$z$ plane.}
        \label{fig:tori_zslice}
    \end{subfigure}
    \hfill
    \begin{subfigure}[t]{0.27\linewidth}
        \centering
        \includegraphics[width = \textwidth]{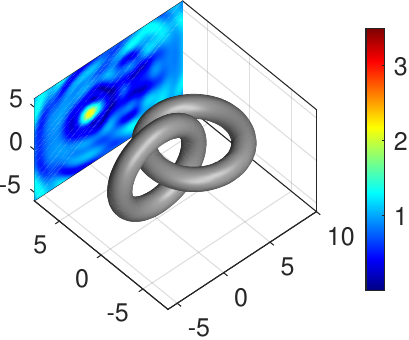}
        \caption{Total field $|u+u^{\mathrm{inc}}|$ on a constant-$y$ plane.}
        \label{fig:tori_yslice}
    \end{subfigure}
    \caption{Linked-tori Helmholtz problem ($\kappa = 4$) with a zero Neumann boundary condition and an incident plane wave in the $y$-direction.}
    \label{fig:tori}
\end{figure}
\paragraph{Epoch count versus prescribed tolerance.}

To examine how the optimization effort changes with tighter stopping
criteria, we fix the training strategy and vary only the finest-level target
$\varepsilon_{\ell_{\mathrm F}}$.  Across levels the targets satisfy
$\varepsilon_\ell=1.5\,\varepsilon_{\ell+1}$, while the quadrature
resolution increases by a factor of $2^{1/3}$ per level.  Using the
Lipschitz sampling scale $m=1$ gives
$(h_\ell/h_{\ell+1})^{2m}=2^{2/3}$, close to the imposed tolerance ratio.

We run this study on the linked tori and coiled torus.  The linked-tori
hierarchy has ten levels and reaches $n=256$ per angular variable
($N=2n^2=131\,072$) at the finest level, with
$\kappa\in\{1,2,3,4\}$.  The coiled-torus hierarchy has thirteen levels
and reaches $n=512$ ($N=512^2=262\,144$), with
$\kappa\in\{1,2,3,4,5\}$.

Figure~\ref{fig:tol_workload}, with panels
\ref{fig:torii_tol} and~\ref{fig:coil_tol}, plots the total epoch count,
summed over all levels, against the achieved finest-level loss.  Over the
tested range, the dependence on inverse loss is clearly sublinear.  The
dashed guides show the reference laws
$\varepsilon_{\ell_{\mathrm F}}^{-0.2}$ and
$\varepsilon_{\ell_{\mathrm F}}^{-0.5}$; they are included to indicate
the scale of the observed growth rather than to assert an asymptotic
complexity exponent.  In particular, substantially lower achieved losses
are obtained without a proportional increase in total epochs.
\begin{figure}[h]
    \centering
    \begin{subfigure}[t]{0.4\linewidth}
        \centering
        \includegraphics[width=\linewidth]{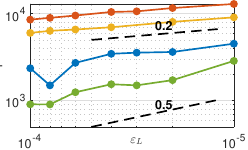}
        \caption{Two linked tori, $10$ levels, $n=256$ at the finest
        level; $\kappa=1,2,3,4$ (blue, green, yellow, red).}
        \label{fig:torii_tol}
    \end{subfigure}
    \hspace{2em}
    \begin{subfigure}[t]{0.4\linewidth}
        \centering
        \includegraphics[width=\linewidth]{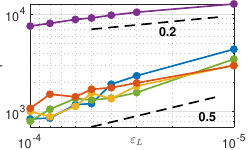}
        \caption{Coiled torus, $13$ levels, $n=512$ at the finest level;
        $\kappa=1,2,3,4,5$ (blue, green, yellow, red, purple).}
        \label{fig:coil_tol}
    \end{subfigure}
    \caption{Total epoch count summed over all refinement levels versus
    the achieved finest-level loss $\varepsilon_{\ell_{\text{F}}}$ (log--log;
    $\varepsilon_{\ell_{\text{F}}}$ decreases to the right as the tolerance tightens).
    The dashed lines indicate the reference slopes $\varepsilon_{\ell_{\text{F}}}^{-0.2}$
    and $\varepsilon_{\ell_{\text{F}}}^{-0.5}$.}
    \label{fig:tol_workload}
\end{figure}
\paragraph{Scattering by a genus-three surface.}

Finally, we solve the exterior Helmholtz problem on a genus-three surface
represented by a dense point cloud and discretized with IBIM, and compare the performance between using the cyclic schedule and the progressive schedule.  The incident
field is the plane wave $u^{\mathrm{inc}}=\exp(i\kappa x)$.

\begin{figure}[h]
    \centering
    \begin{subfigure}[t]{0.24\linewidth}
        \centering
        \includegraphics[width=\linewidth]{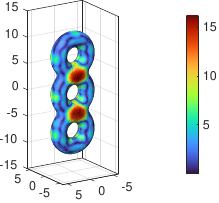}
        \caption{Computed density $|\rho_\theta^{\mathrm{IBIM}}|$.}
        \label{fig:g3_obj}
    \end{subfigure}
    \hfill
    \begin{subfigure}[t]{0.30\linewidth}
        \centering
        \includegraphics[width=\linewidth]{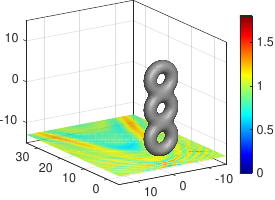}
        \caption{Total field $|u+u^{\mathrm{inc}}|$ on a constant-$z$ plane.}
        \label{fig:g3_zslice}
    \end{subfigure}
    \hfill
    \begin{subfigure}[t]{0.30\linewidth}
        \centering
        \includegraphics[width=\linewidth]{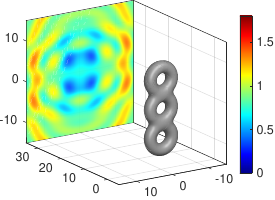}
        \caption{Total field $|u+u^{\mathrm{inc}}|$ on a constant-$x$ plane.}
        \label{fig:g3_xslice}
    \end{subfigure}
    \caption{{Helmholtz problem ($\kappa = 3$) with a zero Neumann boundary condition and an incident plane wave in the $x$-direction.}}
    \label{fig:g3}
\end{figure}

Figure~\ref{fig:g3_obj} shows the learned density for $\kappa=3$ with the
incident wave propagating in the $x$-direction.  The Cartesian grid has
$n=128$ points per coordinate direction ($h\approx0.2197$), and the
network has $10$ hidden layers of $500$ neurons with SIREN activation and
$\omega_0=10$.

Table~\ref{tab:genus3_training_combined} compares the cyclic and
progressive schedules.  The cyclic schedule has a smaller spread in both
the epoch count and the final-level wall-clock time.

\begin{table}[!h]
\centering
\caption{{MLSG training on the genus-three surface using the cyclic and progressive schedules (20 trials). Boxes span the interquartile range with the median marked; whiskers extend to the most extreme values within $1.5\times\mathrm{IQR}$, and circles mark outliers. Boldface levels in the cyclic schedule indicate revisited levels trained at the tightened target.}}
\label{tab:genus3_training_combined}
\medskip
\begin{tabular}{c r r c c c}
\toprule
Level & $n$ & $N$& $\varepsilon$ & \#\,epochs & Time (sec) \\
\cmidrule(lr){5-5}\cmidrule(lr){6-6}
 & & & & \begin{tikzpicture}[x=4.5cm, y=1.6ex, baseline=0.5ex]\draw[axisline] (0,0) -- (1,0);\draw[axisline] (0,0) -- (0,0.8);\node[tick] at (0,2.6) {0};\draw[axisline] (0.2,0) -- (0.2,0.8);\node[tick] at (0.2,2.6) {50};\draw[axisline] (0.4,0) -- (0.4,0.8);\node[tick] at (0.4,2.6) {100};\draw[axisline] (0.6,0) -- (0.6,0.8);\node[tick] at (0.6,2.6) {150};\draw[axisline] (0.8,0) -- (0.8,0.8);\node[tick] at (0.8,2.6) {200};\draw[axisline] (1,0) -- (1,0.8);\node[tick] at (1,2.6) {250};\end{tikzpicture} & \begin{tikzpicture}[x=4.5cm, y=1.6ex, baseline=0.5ex]\draw[axisline] (0,0) -- (1,0);\draw[axisline] (0,0) -- (0,0.8);\node[tick] at (0,2.6) {0};\draw[axisline] (0.2,0) -- (0.2,0.8);\node[tick] at (0.2,2.6) {15};\draw[axisline] (0.4,0) -- (0.4,0.8);\node[tick] at (0.4,2.6) {30};\draw[axisline] (0.6,0) -- (0.6,0.8);\node[tick] at (0.6,2.6) {45};\draw[axisline] (0.8,0) -- (0.8,0.8);\node[tick] at (0.8,2.6) {60};\draw[axisline] (1,0) -- (1,0.8);\node[tick] at (1,2.6) {75};\end{tikzpicture} \\[2pt]
\midrule
\multicolumn{6}{c}{{\textbf{(a) Cyclic schedule}}} \\
\midrule
1 & $32^3$ & 2\,232 & $5\times10^{-3}$ & \begin{tikzpicture}[x=4.5cm, y=1.6ex, baseline=-0.5ex]\draw[axisline] (0,0) -- (1,0);\draw[bxp] (0.2480,0) -- (0.2600,0);\draw[bxp] (0.3110,0) -- (0.3840,0);\draw[bxp] (0.2480,-0.7) -- (0.2480,0.7);\draw[bxp] (0.3840,-0.7) -- (0.3840,0.7);\filldraw[boxfill,draw=black,line width=0.5pt] (0.2600,-1.1) rectangle (0.3110,1.1);\draw[medline] (0.2720,-1.1) -- (0.2720,1.1);\draw[outl] (0.4,0) circle (1.6pt);\end{tikzpicture} & \begin{tikzpicture}[x=4.5cm, y=1.6ex, baseline=-0.5ex]\draw[axisline] (0,0) -- (1,0);\draw[bxp] (0.0053,0) -- (0.0053,0);\draw[bxp] (0.0067,0) -- (0.0080,0);\draw[bxp] (0.0053,-0.7) -- (0.0053,0.7);\draw[bxp] (0.0080,-0.7) -- (0.0080,0.7);\filldraw[boxfill,draw=black,line width=0.5pt] (0.0053,-1.1) rectangle (0.0067,1.1);\draw[medline] (0.0053,-1.1) -- (0.0053,1.1);\draw[outl] (0.0200,0) circle (1.6pt);\end{tikzpicture} \\[3pt]
2 & $40^3$ & 3\,512 & $5\times10^{-3}$ & \begin{tikzpicture}[x=4.5cm, y=1.6ex, baseline=-0.5ex]\draw[axisline] (0,0) -- (1,0);\draw[bxp] (0.1520,0) -- (0.1560,0);\draw[bxp] (0.1640,0) -- (0.1720,0);\draw[bxp] (0.1520,-0.7) -- (0.1520,0.7);\draw[bxp] (0.1720,-0.7) -- (0.1720,0.7);\filldraw[boxfill,draw=black,line width=0.5pt] (0.1560,-1.1) rectangle (0.1640,1.1);\draw[medline] (0.1600,-1.1) -- (0.1600,1.1);\end{tikzpicture} & \begin{tikzpicture}[x=4.5cm, y=1.6ex, baseline=-0.5ex]\draw[axisline] (0,0) -- (1,0);\draw[bxp] (0.0040,0) -- (0.0040,0);\draw[bxp] (0.0040,0) -- (0.0040,0);\draw[bxp] (0.0040,-0.7) -- (0.0040,0.7);\draw[bxp] (0.0040,-0.7) -- (0.0040,0.7);\filldraw[boxfill,draw=black,line width=0.5pt] (0.0040,-1.1) rectangle (0.0040,1.1);\draw[medline] (0.0040,-1.1) -- (0.0040,1.1);\end{tikzpicture} \\[3pt]
3 & $50^3$ & 5\,628 & $5\times10^{-3}$ & \begin{tikzpicture}[x=4.5cm, y=1.6ex, baseline=-0.5ex]\draw[axisline] (0,0) -- (1,0);\draw[bxp] (0.1400,0) -- (0.1400,0);\draw[bxp] (0.1480,0) -- (0.1600,0);\draw[bxp] (0.1400,-0.7) -- (0.1400,0.7);\draw[bxp] (0.1600,-0.7) -- (0.1600,0.7);\filldraw[boxfill,draw=black,line width=0.5pt] (0.1400,-1.1) rectangle (0.1480,1.1);\draw[medline] (0.1440,-1.1) -- (0.1440,1.1);\end{tikzpicture} & \begin{tikzpicture}[x=4.5cm, y=1.6ex, baseline=-0.5ex]\draw[axisline] (0,0) -- (1,0);\draw[bxp] (0.0053,0) -- (0.0053,0);\draw[bxp] (0.0053,0) -- (0.0053,0);\draw[bxp] (0.0053,-0.7) -- (0.0053,0.7);\draw[bxp] (0.0053,-0.7) -- (0.0053,0.7);\filldraw[boxfill,draw=black,line width=0.5pt] (0.0053,-1.1) rectangle (0.0053,1.1);\draw[medline] (0.0053,-1.1) -- (0.0053,1.1);\draw[outl] (0.0067,0) circle (1.6pt);\end{tikzpicture} \\[3pt]
\midrule
4 & $64^3$ & 9\,264 & $3\times10^{-3}$ & \begin{tikzpicture}[x=4.5cm, y=1.6ex, baseline=-0.5ex]\draw[axisline] (0,0) -- (1,0);\draw[bxp] (0.1080,0) -- (0.1120,0);\draw[bxp] (0.1160,0) -- (0.1200,0);\draw[bxp] (0.1080,-0.7) -- (0.1080,0.7);\draw[bxp] (0.1200,-0.7) -- (0.1200,0.7);\filldraw[boxfill,draw=black,line width=0.5pt] (0.1120,-1.1) rectangle (0.1160,1.1);\draw[medline] (0.1160,-1.1) -- (0.1160,1.1);\draw[outl] (0.1440,0) circle (1.6pt);\draw[outl] (0.1480,0) circle (1.6pt);\end{tikzpicture} & \begin{tikzpicture}[x=4.5cm, y=1.6ex, baseline=-0.5ex]\draw[axisline] (0,0) -- (1,0);\draw[bxp] (0.0107,0) -- (0.0107,0);\draw[bxp] (0.0107,0) -- (0.0107,0);\draw[bxp] (0.0107,-0.7) -- (0.0107,0.7);\draw[bxp] (0.0107,-0.7) -- (0.0107,0.7);\filldraw[boxfill,draw=black,line width=0.5pt] (0.0107,-1.1) rectangle (0.0107,1.1);\draw[medline] (0.0107,-1.1) -- (0.0107,1.1);\draw[outl] (0.0120,0) circle (1.6pt);\draw[outl] (0.0133,0) circle (1.6pt);\draw[outl] (0.0147,0) circle (1.6pt);\end{tikzpicture} \\[3pt]
5 & $81^3$ & 15\,198 & $3\times10^{-3}$ & \begin{tikzpicture}[x=4.5cm, y=1.6ex, baseline=-0.5ex]\draw[axisline] (0,0) -- (1,0);\draw[bxp] (0.1040,0) -- (0.1080,0);\draw[bxp] (0.1120,0) -- (0.1120,0);\draw[bxp] (0.1040,-0.7) -- (0.1040,0.7);\draw[bxp] (0.1120,-0.7) -- (0.1120,0.7);\filldraw[boxfill,draw=black,line width=0.5pt] (0.1080,-1.1) rectangle (0.1120,1.1);\draw[medline] (0.1120,-1.1) -- (0.1120,1.1);\end{tikzpicture} & \begin{tikzpicture}[x=4.5cm, y=1.6ex, baseline=-0.5ex]\draw[axisline] (0,0) -- (1,0);\draw[bxp] (0.0187,0) -- (0.0187,0);\draw[bxp] (0.0200,0) -- (0.0200,0);\draw[bxp] (0.0187,-0.7) -- (0.0187,0.7);\draw[bxp] (0.0200,-0.7) -- (0.0200,0.7);\filldraw[boxfill,draw=black,line width=0.5pt] (0.0187,-1.1) rectangle (0.0200,1.1);\draw[medline] (0.0200,-1.1) -- (0.0200,1.1);\end{tikzpicture} \\[3pt]
6 & $102^3$ & 24\,034 & $3\times10^{-3}$ & \begin{tikzpicture}[x=4.5cm, y=1.6ex, baseline=-0.5ex]\draw[axisline] (0,0) -- (1,0);\draw[bxp] (0.0920,0) -- (0.0920,0);\draw[bxp] (0.0920,0) -- (0.0920,0);\draw[bxp] (0.0920,-0.7) -- (0.0920,0.7);\draw[bxp] (0.0920,-0.7) -- (0.0920,0.7);\filldraw[boxfill,draw=black,line width=0.5pt] (0.0920,-1.1) rectangle (0.0920,1.1);\draw[medline] (0.0920,-1.1) -- (0.0920,1.1);\draw[outl] (0.0880,0) circle (1.6pt);\draw[outl] (0.0960,0) circle (1.6pt);\draw[outl] (0.0960,0) circle (1.6pt);\draw[outl] (0.0960,0) circle (1.6pt);\draw[outl] (0.1560,0) circle (1.6pt);\end{tikzpicture} & \begin{tikzpicture}[x=4.5cm, y=1.6ex, baseline=-0.5ex]\draw[axisline] (0,0) -- (1,0);\draw[bxp] (0.0360,0) -- (0.0360,0);\draw[bxp] (0.0360,0) -- (0.0360,0);\draw[bxp] (0.0360,-0.7) -- (0.0360,0.7);\draw[bxp] (0.0360,-0.7) -- (0.0360,0.7);\filldraw[boxfill,draw=black,line width=0.5pt] (0.0360,-1.1) rectangle (0.0360,1.1);\draw[medline] (0.0360,-1.1) -- (0.0360,1.1);\draw[outl] (0.0347,0) circle (1.6pt);\draw[outl] (0.0373,0) circle (1.6pt);\draw[outl] (0.0373,0) circle (1.6pt);\draw[outl] (0.0373,0) circle (1.6pt);\draw[outl] (0.0613,0) circle (1.6pt);\end{tikzpicture} \\[3pt]
\midrule
\textbf{5} & $81^3$ & 15\,198 & $1\times10^{-3}$ & \begin{tikzpicture}[x=4.5cm, y=1.6ex, baseline=-0.5ex]\draw[axisline] (0,0) -- (1,0);\draw[bxp] (0.1720,0) -- (0.1920,0);\draw[bxp] (0.2080,0) -- (0.2160,0);\draw[bxp] (0.1720,-0.7) -- (0.1720,0.7);\draw[bxp] (0.2160,-0.7) -- (0.2160,0.7);\filldraw[boxfill,draw=black,line width=0.5pt] (0.1920,-1.1) rectangle (0.2080,1.1);\draw[medline] (0.2040,-1.1) -- (0.2040,1.1);\end{tikzpicture} & \begin{tikzpicture}[x=4.5cm, y=1.6ex, baseline=-0.5ex]\draw[axisline] (0,0) -- (1,0);\draw[bxp] (0.0307,0) -- (0.0333,0);\draw[bxp] (0.0360,0) -- (0.0373,0);\draw[bxp] (0.0307,-0.7) -- (0.0307,0.7);\draw[bxp] (0.0373,-0.7) -- (0.0373,0.7);\filldraw[boxfill,draw=black,line width=0.5pt] (0.0333,-1.1) rectangle (0.0360,1.1);\draw[medline] (0.0353,-1.1) -- (0.0353,1.1);\end{tikzpicture} \\[3pt]
\textbf{6} & $102^3$ & 24\,034 & $1\times10^{-3}$ & \begin{tikzpicture}[x=4.5cm, y=1.6ex, baseline=-0.5ex]\draw[axisline] (0,0) -- (1,0);\draw[bxp] (0.2400,0) -- (0.2480,0);\draw[bxp] (0.2680,0) -- (0.2760,0);\draw[bxp] (0.2400,-0.7) -- (0.2400,0.7);\draw[bxp] (0.2760,-0.7) -- (0.2760,0.7);\filldraw[boxfill,draw=black,line width=0.5pt] (0.2480,-1.1) rectangle (0.2680,1.1);\draw[medline] (0.2580,-1.1) -- (0.2580,1.1);\draw[outl] (0.3120,0) circle (1.6pt);\draw[outl] (0.3320,0) circle (1.6pt);\draw[outl] (0.3360,0) circle (1.6pt);\end{tikzpicture} & \begin{tikzpicture}[x=4.5cm, y=1.6ex, baseline=-0.5ex]\draw[axisline] (0,0) -- (1,0);\draw[bxp] (0.0947,0) -- (0.0973,0);\draw[bxp] (0.1053,0) -- (0.1093,0);\draw[bxp] (0.0947,-0.7) -- (0.0947,0.7);\draw[bxp] (0.1093,-0.7) -- (0.1093,0.7);\filldraw[boxfill,draw=black,line width=0.5pt] (0.0973,-1.1) rectangle (0.1053,1.1);\draw[medline] (0.1013,-1.1) -- (0.1013,1.1);\draw[outl] (0.1227,0) circle (1.6pt);\draw[outl] (0.1307,0) circle (1.6pt);\draw[outl] (0.1320,0) circle (1.6pt);\end{tikzpicture} \\[3pt]
\midrule
7 & $128^3$ & 38\,265 & $1\times10^{-3}$ & \begin{tikzpicture}[x=4.5cm, y=1.6ex, baseline=-0.5ex]\draw[axisline] (0,0) -- (1,0);\draw[bxp] (0.4120,0) -- (0.4230,0);\draw[bxp] (0.4320,0) -- (0.4400,0);\draw[bxp] (0.4120,-0.7) -- (0.4120,0.7);\draw[bxp] (0.4400,-0.7) -- (0.4400,0.7);\filldraw[boxfill,draw=black,line width=0.5pt] (0.4230,-1.1) rectangle (0.4320,1.1);\draw[medline] (0.4280,-1.1) -- (0.4280,1.1);\draw[outl] (0.4080,0) circle (1.6pt);\draw[outl] (0.7280,0) circle (1.6pt);\draw[outl] (0.8080,0) circle (1.6pt);\end{tikzpicture} & \begin{tikzpicture}[x=4.5cm, y=1.6ex, baseline=-0.5ex]\draw[axisline] (0,0) -- (1,0);\draw[bxp] (0.3947,0) -- (0.4040,0);\draw[bxp] (0.4163,0) -- (0.4240,0);\draw[bxp] (0.3947,-0.7) -- (0.3947,0.7);\draw[bxp] (0.4240,-0.7) -- (0.4240,0.7);\filldraw[boxfill,draw=black,line width=0.5pt] (0.4040,-1.1) rectangle (0.4163,1.1);\draw[medline] (0.4107,-1.1) -- (0.4107,1.1);\draw[outl] (0.6947,0) circle (1.6pt);\draw[outl] (0.7787,0) circle (1.6pt);\end{tikzpicture} \\[3pt]
\addlinespace[3pt]
\midrule
\multicolumn{6}{c}{{\textbf{(b) Progressive schedule}}} \\
\midrule
1 & $32^3$ & 2\,232 & $5\times10^{-3}$ & \begin{tikzpicture}[x=4.5cm, y=1.6ex, baseline=-0.5ex]\draw[axisline] (0,0) -- (1,0);\draw[bxp] (0.2440,0) -- (0.2640,0);\draw[bxp] (0.3240,0) -- (0.3800,0);\draw[bxp] (0.2440,-0.7) -- (0.2440,0.7);\draw[bxp] (0.3800,-0.7) -- (0.3800,0.7);\filldraw[boxfill,draw=black,line width=0.5pt] (0.2640,-1.1) rectangle (0.3240,1.1);\draw[medline] (0.2780,-1.1) -- (0.2780,1.1);\end{tikzpicture} & \begin{tikzpicture}[x=4.5cm, y=1.6ex, baseline=-0.5ex]\draw[axisline] (0,0) -- (1,0);\draw[bxp] (0.0053,0) -- (0.0053,0);\draw[bxp] (0.0067,0) -- (0.0080,0);\draw[bxp] (0.0053,-0.7) -- (0.0053,0.7);\draw[bxp] (0.0080,-0.7) -- (0.0080,0.7);\filldraw[boxfill,draw=black,line width=0.5pt] (0.0053,-1.1) rectangle (0.0067,1.1);\draw[medline] (0.0053,-1.1) -- (0.0053,1.1);\draw[outl] (0.0213,0) circle (1.6pt);\end{tikzpicture} \\[3pt]
2 & $40^3$ & 3\,512 & $5\times10^{-3}$ & \begin{tikzpicture}[x=4.5cm, y=1.6ex, baseline=-0.5ex]\draw[axisline] (0,0) -- (1,0);\draw[bxp] (0.1480,0) -- (0.1560,0);\draw[bxp] (0.1640,0) -- (0.1680,0);\draw[bxp] (0.1480,-0.7) -- (0.1480,0.7);\draw[bxp] (0.1680,-0.7) -- (0.1680,0.7);\filldraw[boxfill,draw=black,line width=0.5pt] (0.1560,-1.1) rectangle (0.1640,1.1);\draw[medline] (0.1580,-1.1) -- (0.1580,1.1);\end{tikzpicture} & \begin{tikzpicture}[x=4.5cm, y=1.6ex, baseline=-0.5ex]\draw[axisline] (0,0) -- (1,0);\draw[bxp] (0.0040,0) -- (0.0040,0);\draw[bxp] (0.0040,0) -- (0.0040,0);\draw[bxp] (0.0040,-0.7) -- (0.0040,0.7);\draw[bxp] (0.0040,-0.7) -- (0.0040,0.7);\filldraw[boxfill,draw=black,line width=0.5pt] (0.0040,-1.1) rectangle (0.0040,1.1);\draw[medline] (0.0040,-1.1) -- (0.0040,1.1);\end{tikzpicture} \\[3pt]
3 & $50^3$ & 5\,628 & $5\times10^{-3}$ & \begin{tikzpicture}[x=4.5cm, y=1.6ex, baseline=-0.5ex]\draw[axisline] (0,0) -- (1,0);\draw[bxp] (0.1400,0) -- (0.1440,0);\draw[bxp] (0.1480,0) -- (0.1520,0);\draw[bxp] (0.1400,-0.7) -- (0.1400,0.7);\draw[bxp] (0.1520,-0.7) -- (0.1520,0.7);\filldraw[boxfill,draw=black,line width=0.5pt] (0.1440,-1.1) rectangle (0.1480,1.1);\draw[medline] (0.1460,-1.1) -- (0.1460,1.1);\end{tikzpicture} & \begin{tikzpicture}[x=4.5cm, y=1.6ex, baseline=-0.5ex]\draw[axisline] (0,0) -- (1,0);\draw[bxp] (0.0053,0) -- (0.0053,0);\draw[bxp] (0.0053,0) -- (0.0053,0);\draw[bxp] (0.0053,-0.7) -- (0.0053,0.7);\draw[bxp] (0.0053,-0.7) -- (0.0053,0.7);\filldraw[boxfill,draw=black,line width=0.5pt] (0.0053,-1.1) rectangle (0.0053,1.1);\draw[medline] (0.0053,-1.1) -- (0.0053,1.1);\end{tikzpicture} \\[3pt]
\midrule
4 & $64^3$ & 9\,264 & $3\times10^{-3}$ & \begin{tikzpicture}[x=4.5cm, y=1.6ex, baseline=-0.5ex]\draw[axisline] (0,0) -- (1,0);\draw[bxp] (0.1080,0) -- (0.1120,0);\draw[bxp] (0.1160,0) -- (0.1200,0);\draw[bxp] (0.1080,-0.7) -- (0.1080,0.7);\draw[bxp] (0.1200,-0.7) -- (0.1200,0.7);\filldraw[boxfill,draw=black,line width=0.5pt] (0.1120,-1.1) rectangle (0.1160,1.1);\draw[medline] (0.1160,-1.1) -- (0.1160,1.1);\draw[outl] (0.1680,0) circle (1.6pt);\end{tikzpicture} & \begin{tikzpicture}[x=4.5cm, y=1.6ex, baseline=-0.5ex]\draw[axisline] (0,0) -- (1,0);\draw[bxp] (0.0107,0) -- (0.0107,0);\draw[bxp] (0.0107,0) -- (0.0107,0);\draw[bxp] (0.0107,-0.7) -- (0.0107,0.7);\draw[bxp] (0.0107,-0.7) -- (0.0107,0.7);\filldraw[boxfill,draw=black,line width=0.5pt] (0.0107,-1.1) rectangle (0.0107,1.1);\draw[medline] (0.0107,-1.1) -- (0.0107,1.1);\draw[outl] (0.0160,0) circle (1.6pt);\end{tikzpicture} \\[3pt]
5 & $81^3$ & 15\,198 & $3\times10^{-3}$ & \begin{tikzpicture}[x=4.5cm, y=1.6ex, baseline=-0.5ex]\draw[axisline] (0,0) -- (1,0);\draw[bxp] (0.1080,0) -- (0.1080,0);\draw[bxp] (0.1120,0) -- (0.1160,0);\draw[bxp] (0.1080,-0.7) -- (0.1080,0.7);\draw[bxp] (0.1160,-0.7) -- (0.1160,0.7);\filldraw[boxfill,draw=black,line width=0.5pt] (0.1080,-1.1) rectangle (0.1120,1.1);\draw[medline] (0.1120,-1.1) -- (0.1120,1.1);\end{tikzpicture} & \begin{tikzpicture}[x=4.5cm, y=1.6ex, baseline=-0.5ex]\draw[axisline] (0,0) -- (1,0);\draw[bxp] (0.0187,0) -- (0.0187,0);\draw[bxp] (0.0200,0) -- (0.0200,0);\draw[bxp] (0.0187,-0.7) -- (0.0187,0.7);\draw[bxp] (0.0200,-0.7) -- (0.0200,0.7);\filldraw[boxfill,draw=black,line width=0.5pt] (0.0187,-1.1) rectangle (0.0200,1.1);\draw[medline] (0.0200,-1.1) -- (0.0200,1.1);\end{tikzpicture} \\[3pt]
\midrule
6 & $102^3$ & 24\,034 & $1\times10^{-3}$ & \begin{tikzpicture}[x=4.5cm, y=1.6ex, baseline=-0.5ex]\draw[axisline] (0,0) -- (1,0);\draw[bxp] (0.3640,0) -- (0.3880,0);\draw[bxp] (0.4050,0) -- (0.4280,0);\draw[bxp] (0.3640,-0.7) -- (0.3640,0.7);\draw[bxp] (0.4280,-0.7) -- (0.4280,0.7);\filldraw[boxfill,draw=black,line width=0.5pt] (0.3880,-1.1) rectangle (0.4050,1.1);\draw[medline] (0.3960,-1.1) -- (0.3960,1.1);\draw[outl] (0.3600,0) circle (1.6pt);\end{tikzpicture} & \begin{tikzpicture}[x=4.5cm, y=1.6ex, baseline=-0.5ex]\draw[axisline] (0,0) -- (1,0);\draw[bxp] (0.1440,0) -- (0.1533,0);\draw[bxp] (0.1597,0) -- (0.1667,0);\draw[bxp] (0.1440,-0.7) -- (0.1440,0.7);\draw[bxp] (0.1667,-0.7) -- (0.1667,0.7);\filldraw[boxfill,draw=black,line width=0.5pt] (0.1533,-1.1) rectangle (0.1597,1.1);\draw[medline] (0.1553,-1.1) -- (0.1553,1.1);\draw[outl] (0.1427,0) circle (1.6pt);\draw[outl] (0.1693,0) circle (1.6pt);\end{tikzpicture} \\[3pt]
\midrule
7 & $128^3$ & 38\,265 & $1\times10^{-3}$ & \begin{tikzpicture}[x=4.5cm, y=1.6ex, baseline=-0.5ex]\draw[axisline] (0,0) -- (1,0);\draw[bxp] (0.5,0) -- (0.5150,0);\draw[bxp] (0.8240,0) -- (0.8480,0);\draw[bxp] (0.5,-0.7) -- (0.5,0.7);\draw[bxp] (0.8480,-0.7) -- (0.8480,0.7);\filldraw[boxfill,draw=black,line width=0.5pt] (0.5150,-1.1) rectangle (0.8240,1.1);\draw[medline] (0.8220,-1.1) -- (0.8220,1.1);\end{tikzpicture} & \begin{tikzpicture}[x=4.5cm, y=1.6ex, baseline=-0.5ex]\draw[axisline] (0,0) -- (1,0);\draw[bxp] (0.4827,0) -- (0.4977,0);\draw[bxp] (0.7950,0) -- (0.8173,0);\draw[bxp] (0.4827,-0.7) -- (0.4827,0.7);\draw[bxp] (0.8173,-0.7) -- (0.8173,0.7);\filldraw[boxfill,draw=black,line width=0.5pt] (0.4977,-1.1) rectangle (0.7950,1.1);\draw[medline] (0.7853,-1.1) -- (0.7853,1.1);\end{tikzpicture} \\[3pt]
\addlinespace[3pt]
\midrule
\multicolumn{4}{r}{{\textbf{Final-level median (cyclic)}}}
& \textbf{107} & \textbf{30.8} \\
\multicolumn{4}{r}{{\textbf{Final-level median (progressive)}}}
& \textbf{205} & \textbf{58.9} \\
\bottomrule
\end{tabular}
\end{table}
Every stage of the cyclic schedule reaches its target loss.  The schedule
first visits level~6 at a looser tolerance, then revisits levels~5 and~6
with tighter targets (boldface rows in
Table~\ref{tab:genus3_training_combined}) before ascending to the finest
grid.  Relative to the progressive schedule, these revisits reduce both the
IQR and the total epoch count; at the finest and most expensive level, the
cyclic schedule uses roughly half as many epochs.  In this example, the
cyclic schedule therefore improves both efficiency and run-to-run
consistency.

\subsection{Ring-shaped horizon data in five-dimensional general relativity}
\label{sec:Examples:blackring}

The final example applies MLSG to the exterior Robin problem for the
four-dimensional Laplacian on a ring-shaped hypersurface.

\paragraph{Physical origin.}
In four-dimensional spacetime, the familiar black-hole horizon has spherical
topology.  With one additional spatial dimension, other topologies become
possible: Emparan and Reall constructed five-dimensional vacuum solutions
with horizon topology $S^1\times S^2$~\cite{EmparanReall2002,EmparanReall2006}.
A constant-time slice of such a spacetime is four-dimensional.  For a
momentarily static, conformally flat slice with spatial metric
$\psi^2\delta$ outside the horizon, the constraint equations reduce to
Problem~3: $\psi$ is harmonic in the exterior, and the Robin condition with
$\beta=\tfrac13(\kappa_1+\kappa_2+\kappa_3)$ makes the boundary a marginal
surface~\cite{IdaNakao2002,Maxwell2005}.  The example below computes this
type of momentarily static data on an $S^1\times S^2$ hypersurface.

\paragraph{Geometry.}

Write $x=(x_1,x_2,x_3,x_4)\in\R^4$ and fix $0<r<R$.  Let

\begin{equation}\label{eq:ring-circle}
    C \;:=\; \bigl\{\, x \in \R^4 \;:\; x_1^2 + x_2^2 = R^2,\;
                       x_3 = x_4 = 0 \,\bigr\}
\end{equation}

be the radius-$R$ circle in the $(x_1,x_2)$-plane.  The ring surface is the
set of points at distance $r$ from $C$, equivalently the zero level set of

\begin{equation}\label{eq:ring-sdf}
    d_\Gam(x) \;=\; \sqrt{\bigl(\varrho(x) - R\bigr)^{2} + x_3^{2} + x_4^{2}} \;-\; r,
    \qquad
    \varrho(x) \;:=\; \sqrt{x_1^{2} + x_2^{2}},
\end{equation}
and bounds the solid ring $\Omega=\{x:d_\Gam(x)<0\}$.  Here $R$ is the
ring radius and $r$ the tube radius.  The condition $r<R$ makes $\Gam$ a
smooth embedded hypersurface diffeomorphic to $S^1\times S^2$: the normal
fiber over each point of $C$ is a two-sphere.  The closest-point projection
and principal curvatures are available in closed form
(Appendix~\ref{app:ring}), so the IBIM calculation uses exact geometric data.

\paragraph{The Robin coefficient.}
We set $\beta=\tfrac13(\kappa_1+\kappa_2+\kappa_3)$, one third of the
additive mean curvature.  For the ring, this coefficient reduces to
\eqref{eq:ring-K-closed} and is positive everywhere precisely when
$r/R<2/3$; the configuration used below satisfies this condition.  With
$\rho=\psi|_\Gam$, the second-kind equation has kernel
\eqref{eq:ring-kernel} and right-hand side $g\equiv1$.  Appendix~\ref{app:ring}
records the marginal-surface reduction and the closed-form ring geometry.

\paragraph{Steklov resonance.}
The formulation also has a resonance caveat.  The relevant obstruction is a
Robin, or Steklov-type, resonance: injectivity fails if the corresponding
homogeneous exterior Robin problem admits a nontrivial decaying
solution~\cite{kress2014linear}.  For a constant Robin coefficient this
reduces to the usual exterior Steklov spectral condition.  As in the
Helmholtz experiments, we choose a nonresonant test configuration; near
resonance, ill-conditioning would appear in the discrete operator $A_N$.
{{%
\paragraph{Computational results.}
We discretize the second-kind equation with IBIM, using tube half-width
$\epsilon=1.5h$ at every level.  The kernel is
\[
    k_{\mathrm{Lap},\R^4}(x,y)
    \;=\;-\left(
    \beta(y)\,G_{0,4}(x,y)
    + \frac{\partial G_{0,4}(x,y)}{\partial \mathbf{n}_y}\right).
\]

The training hierarchy refines the per-coordinate Cartesian resolution from
$n=32$ to $n=230$.  At the finest level it
contains {\color{black}$N=3\,288\,976$} tube quadrature points; every
optimization step uses a batch of $1024$ rows.  Table~\ref{tab:ring_training}
reports the quadrature size, target and achieved loss, and wall-clock time at
each level.

\begin{table}[htbp]
\centering
\caption{
MLSG training statistics for the exterior Robin problem on a ring-shaped
hypersurface in $\R^4$ under the progressive schedule
($R=1$, $r=1/2$; SIREN with $10$ hidden layers of $500$ neurons,
$2{,}007{,}001$ parameters, $\omega_0=30$; batch size $1024$).
Means and standard deviations of the achieved loss and wall-clock time are
computed over $10$ independent runs.  Here $n$ is the per-coordinate
Cartesian resolution, $h$ the grid spacing, and $N$ the number of tube
quadrature points, equal to the dimension of the dense BIE system; only the
Nystr\"om rows required by each mini-batch are evaluated on demand.}
\label{tab:ring_training}\medskip
\begin{tabular}{c r r r c c r r}
\toprule
\multicolumn{6}{c}{}
& \multicolumn{2}{c}{Time (sec)} \\
\cmidrule(lr){7-8}
Level
& {\color{black}$n$}
& $h$
& {\color{black}$N$} (matrix)
& Target loss
& Achieved loss
& Mean
& Std \\
\midrule
1
& $32$
& $0.1935$
& $8\,792$
& $5.0\times10^{-4}$
& $(3.66\pm0.94)\times10^{-4}$
& $3.64$
& $0.45$ \\

2
& $40$
& $0.1538$
& $17\,632$
& $5.0\times10^{-4}$
& $(4.21\pm0.59)\times10^{-4}$
& $13.15$
& $5.76$ \\

3
& $48$
& $0.1277$
& $30\,072$
& $5.0\times10^{-4}$
& $(4.14\pm0.58)\times10^{-4}$
& $3.68$
& $2.02$ \\

4
& $64$
& $0.0952$
& $70\,432$
& $5.0\times10^{-4}$
& $(4.19\pm0.52)\times10^{-4}$
& $19.32$
& $13.46$ \\

5
& $81$
& $0.0750$
& $142\,848$
& $3.0\times10^{-4}$
& $(2.46\pm0.28)\times10^{-4}$
& $32.15$
& $22.65$ \\

6
& $102$
& $0.0594$
& $283\,436$
& $2.0\times10^{-4}$
& $(1.50\pm0.23)\times10^{-4}$
& $86.23$
& $33.79$ \\

7
& $128$
& $0.0472$
& $561\,904$
& $2.0\times10^{-4}$
& $(7.48\pm3.97)\times10^{-5}$
& $256.22$
& $57.25$ \\

8
& $161$
& $0.0375$
& $1\,123\,712$
& $2.0\times10^{-4}$
& $(6.19\pm3.98)\times10^{-5}$
& $1\,220.93$
& $406.49$ \\

9
& $203$
& $0.0297$
& $2\,259\,292$
& $2.0\times10^{-4}$
& $(1.71\pm1.29)\times10^{-5}$
& $2\,057.12$
& $0.51$ \\

10
& $230$
& $0.0262$
& $3\,288\,976$
& $1.0\times10^{-4}$
& $(1.56\pm1.28)\times10^{-5}$
& $4\,338.34$
& $0.88$ \\
\midrule
\multicolumn{6}{l}{\textbf{Total}}
& $\mathbf{8\,030.78}$
& $\mathbf{414.14}$ \\
\bottomrule
\end{tabular}
\end{table}
\begin{figure}[htbp]
    \centering
    \begin{subfigure}[t]{0.3\linewidth}
        \centering
        \includegraphics[width=\linewidth]{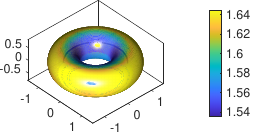}
        \caption{Density $\rho_\theta$ on the slice
        $\Gam \cap \{x_4 = 0\}$, in the $(x_1, x_2, x_3)$-space.}
        \label{fig:ring_density_torus}
    \end{subfigure}
    \hspace{1.2em}
    \begin{subfigure}[t]{0.3\linewidth}
    \centering
    \includegraphics[width=\linewidth]{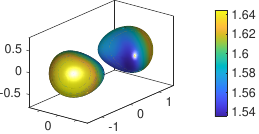}
    \caption{Density $\rho_\theta$ on the tilted slice, drawn in the $(x_1,x_2,x_4)$-space (so the vertical axis is $x_4$).}
    \label{fig:ring_density_tilt50}
    \end{subfigure}
    \hspace{1.2em}
    \begin{subfigure}[t]{0.30\linewidth}
        \centering
        \includegraphics[width=\linewidth]{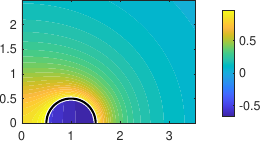}
        \caption{Reconstructed potential $u$ on the $(q,s)$-plane.}
        \label{fig:ring_u_qs}
    \end{subfigure}
    \caption{{The exterior Robin problem on a ring-shaped hypersurface in $\R^4$ with $R=1$, $r=\tfrac12$, $\beta=\tfrac13(\kappa_1+\kappa_2+\kappa_3)$, and $g\equiv1$, computed at finest resolution $n=230$ ($N=3\,288\,976$ quadrature points).  Biaxial rotational symmetry reduces the reconstructed four-dimensional solution in the right panel to the $(q,s)$-plane.}}
    \label{fig:ring_results}
\end{figure}

Figure~\ref{fig:ring_results} summarizes the learned density and reconstructed
field.  The left panel restricts $\rho_\theta$ to the slice $x_4=0$,
for which
\begin{equation*}
    \Gam \cap \{x_4 = 0\} = \left\{(x_1, x_2, x_3):\Bigl(\sqrt{x_1^2 + x_2^2} - R\Bigr)^{2} + x_3^2 \;=\; r^2\right\},
\end{equation*}

is the standard three-dimensional torus with $R=1$ and $r=\tfrac12$.
Because $\rho_\theta$ is an ambient network on $\R^4$, this restriction is
evaluated directly, without interpolation from the quadrature nodes.  The
middle panel shows the same density on the tilted slice
$\Gam\cap\{x_3=x_1\cot\theta\}$ with $\theta=50^\circ$; this section has two
disjoint components.

The right panel reconstructs the solution from the trained density on the
$(q,s)$-plane, where

\begin{equation*}
    q \;:=\; \sqrt{x_1^2 + x_2^2},
    \qquad
    s \;:=\; \sqrt{x_3^2 + x_4^2}
\end{equation*}

are the two rotational radii.  In these coordinates the ring is the circle
$(q-R)^2+s^2=r^2$, shown in black.  The surface, the Robin coefficient, and
the data are invariant under independent rotations in the
$(x_1,x_2)$- and $(x_3,x_4)$-planes, so the solution depends only on
$(q,s)$.  The quarter-plane plot therefore represents the full
four-dimensional field.

\section{Concluding remarks}\label{sec:Conclusion}

In this work, we introduced a multilevel stochastic-gradient neural solver (MLSG) that efficiently resolves second-kind boundary integral equations by coupling continuous network representations with a coarse-to-fine optimization strategy.

For a uniformly stable second-kind discretization, the strongly nonuniform contraction in neural residual minimization stems from the empirical neural tangent kernel (NTK) geometry, not from a resolution-dependent deterioration of the BIE operator. Equation~\eqref{eq:bie-ntk-spectral-equivalence} formalizes this: the spectrum of $A_NT_{\theta,N}A_N^\ast$ is equivalent to that of the empirical NTK up to grid-independent constants.

Our experiments validate the corresponding multilevel mechanism. Within a given level, parameter motion reshapes the finite-width NTK, while refinement re-samples this continuous representation onto a richer quadrature. Residual projections confirm that unresolved BIE residuals align with these expanding tangent directions, supporting a cascadic interpretation of MLSG. Furthermore, our cross-level estimate provides a theoretical basis for avoiding over-solving on coarse levels. By bounding the refined initial loss against the achieved coarse-level tolerance and discretization errors, Equation~\eqref{eq:warm-start-bound} motivates the matched scale $\varepsilon_\ell\asymp h_\ell^{2m}$.

Computationally, MLSG effectively shifts expensive optimization steps away from the finest resolutions and maps highly efficiently to modern GPU architectures. Using on-demand batched evaluation of Nystr\"om rows without hierarchical fast-summation machinery, a single-GPU implementation successfully scaled to million-unknown BIEs (e.g., $N=1~048~576$ for the coiled torus and $N=3{,}288{,}976$ for the $\R^4$ Robin problem). While the substantial wall-clock speedups over our 64-core MATLAB dense-GMRES reference are not hardware-normalized algorithmic claims, they strongly demonstrate that regular mini-batched kernel operations are exceptionally well-suited for current accelerators.

Finally, the $\R^4$ Robin example highlights a major advantage of this formulation: it remains dimension-agnostic, seamlessly extending to four ambient dimensions without altering the algorithmic structure. Realizing this methodology's full potential in high-dimensional settings will require advances in accurate singular quadrature and fast operator application. Future work must also focus on developing a quantitative theory for the evolving finite-width NTK under Adam, clarifying the emergence of effective directions during refinement, and constructing an end-to-end complexity model.

\section*{Acknowledgements}

Bing-Ze Lu received support from the National Science and Technology
Council, Taiwan, through Grants 113-2917-I-564-033 and
114-2115-M-194-007-MY3. 
Richard Tsai is partially supported by National Science Foundation grant
DMS-2513857.  Part of this research was also supported by the Swedish
Research Council under grant no.\ 2021-06594 while Tsai was in residence
at Institut Mittag-Leffler in Djursholm, Sweden, during the Fall 2025
semester.
This work used the Texas Advanced Computing Center (TACC) at The
University of Texas at Austin for development and testing.

\appendix

\section{Surface representations and discretization}\label{app:surf-disc}

This appendix gives the discretization details for the parametric and IBIM
surface representations used in Section~\ref{sec:Examples}.

\subsection{Parametric atlas}\label{app:surf-disc:param}

The atlas
$\{(D_\alpha, \varphi_\alpha)\}_{\alpha=1}^{M_{\mathrm{chart}}}$ of
Section~\ref{sec:Examples} satisfies
\begin{equation}\label{eq:atlas}
    \Gam \;=\; \bigcup_{\alpha=1}^{M_{\mathrm{chart}}} \Gam_\alpha,
    \qquad
    \varphi_\alpha \,:\, D_\alpha \subset \R^{d-1} \;\longrightarrow\;
    \Gam_\alpha \subset \Gam,
\end{equation}
with each $\varphi_\alpha$ a regular parametrization of the chart image
$\Gam_\alpha$ and the chart images pairwise disjoint up to a set of
$dS$-measure zero (their common boundaries). The surface integral
splits across charts as
\begin{equation}\label{eq:atlas-integral}
    \int_\Gam f\, dS
    \;=\; \sum_{\alpha=1}^{M_{\mathrm{chart}}}
            \int_{D_\alpha} f(\varphi_\alpha(\xi))\,
                                  \mu_\alpha(\xi)\,d\xi,
    \qquad
    \mu_\alpha(\xi) \,:=\,
    \sqrt{\det\!\bigl(D\varphi_\alpha(\xi)^{\!\top}
                      D\varphi_\alpha(\xi)\bigr)},
\end{equation}
where $\mu_\alpha$ is the chart-level surface element. The density on
$\Gam$ is represented by one MLP per chart, written as the chart-indexed
collection
\begin{equation}\label{eq:rho-param-mlps}
    \rho_\theta^{\mathrm{param}}
    \;=\; \bigl(\rho_{\theta(1)}, \ldots, \rho_{\theta(M_{\mathrm{chart}})}\bigr),
    ~~\rho_{\theta(\alpha)} \,:\, D_\alpha \,\longrightarrow\, \C,
    ~~
    \theta(\alpha) \in \R^{p_\alpha},
    ~~\alpha = 1, \ldots, M_{\mathrm{chart}},
\end{equation}
where $\theta(\alpha)$ collects the parameters of the chart-$\alpha$
network and the chart index is written in parentheses to keep it
separate from the per-component subscript $\theta_k$ of
Section~\ref{sec:loss}. The full parameter vector
$\theta = \bigl(\theta(1), \ldots, \theta(M_{\mathrm{chart}})\bigr) \in
\R^p$ has dimension
$p = \sum_{\alpha=1}^{M_{\mathrm{chart}}} p_\alpha$. The surface density
and the BIE data on chart $\alpha$ are
\begin{equation}\label{eq:rho-param}
    \rho(\varphi_\alpha(\xi)) \;=\; \rho_{\theta(\alpha)}(\xi),
    \qquad
    g_\alpha(\xi) \,:=\, g(\varphi_\alpha(\xi)),
    \qquad \xi \in D_\alpha.
\end{equation}

\paragraph{Atlas-level quadrature.}
Each chart carries its own quadrature on the parameter domain. On chart
$\alpha$ we choose nodes
$\{\xi_i^\alpha\}_{i=1}^{N_\alpha} \subset D_\alpha$ and positive
weights $\{\omega_i^\alpha\}_{i=1}^{N_\alpha}$ such that
\[
    \int_{D_\alpha} \phi(\xi)\,\mu_\alpha(\xi)\,d\xi
    \;\approx\; \sum_{i=1}^{N_\alpha} w_i^\alpha\,\phi(\xi_i^\alpha),
    \qquad
    w_i^\alpha \,:=\, \omega_i^\alpha\,\mu_\alpha(\xi_i^\alpha),
\]
which transports through $\varphi_\alpha$ to a quadrature on
$\Gam_\alpha$ with surface nodes
$x_i^\alpha := \varphi_\alpha(\xi_i^\alpha)$ and weights $w_i^\alpha$.
The atlas-level node set, sample size, and weight matrix are
\[
    \GamN \,=\, \bigsqcup_{\alpha=1}^{M_{\mathrm{chart}}}
                \{x_i^\alpha\}_{i=1}^{N_\alpha},
    \qquad
    N \,=\, \sum_{\alpha=1}^{M_{\mathrm{chart}}} N_\alpha,
    \qquad
    W \,=\, \mathrm{diag}\bigl(\{w_i^\alpha\}\bigr) \in \R^{N\times N}.
\]

Substituting \eqref{eq:rho-param} into \eqref{eq:BIE} and applying the
chart quadratures gives, for each
$\alpha = 1, \ldots, M_{\mathrm{chart}}$ and
$i = 1, \ldots, N_\alpha$,
\begin{equation}\label{eq:nystrom-charts}
    \tfrac{1}{2}\,\rho_{\theta(\alpha)}(\xi_i^\alpha)
    \;+\; \sum_{\beta=1}^{M_{\mathrm{chart}}} \sum_{j=1}^{N_\beta} w_j^\beta\,
        k\bigl(\varphi_\alpha(\xi_i^\alpha),\,\varphi_\beta(\xi_j^\beta)\bigr)\,
        \rho_{\theta(\beta)}(\xi_j^\beta)
    \;=\; g_\alpha(\xi_i^\alpha),
\end{equation}
the discrete equations on the atlas. For surfaces homeomorphic to
circles (in 2D) or to tori (in 3D), the natural quadrature on each
chart is a corrected truncated trapezoidal rule, chosen to handle
the on-chart singularity of $k$ consistently.

\paragraph{The two-chart system.}
We illustrate \eqref{eq:nystrom-charts} for $M_{\mathrm{chart}} = 2$.
Collect the per-chart sample vectors and pulled-back data,
\[
    \rho_{N_\alpha}^{\mathrm{param}}
    \,:=\, \bigl(\rho_{\theta(\alpha)}(\xi_i^\alpha)\bigr)_{i=1}^{N_\alpha}
    \in \C^{N_\alpha},
    \qquad
    g_{N_\alpha} \,:=\, \bigl(g_\alpha(\xi_i^\alpha)\bigr)_{i=1}^{N_\alpha}
    \in \C^{N_\alpha},
    \qquad \alpha = 1, 2,
\]
and define the inter-chart kernel blocks
\[
    K^{\alpha\beta} \,\in\, \C^{N_\alpha \times N_\beta},
    \qquad
    (K^{\alpha\beta})_{ij} \,:=\, w_j^\beta\,
    k\bigl(\varphi_\alpha(\xi_i^\alpha),\,\varphi_\beta(\xi_j^\beta)\bigr),
    \qquad \alpha, \beta \in \{1, 2\}.
\]
Then \eqref{eq:nystrom-charts} is the block linear system
\begin{equation}\label{eq:two-chart-system}
    \underbrace{
    \begin{pmatrix}
        \tfrac{1}{2} I_{N_1} + K^{11} & K^{12} \\[4pt]
        K^{21} & \tfrac{1}{2} I_{N_2} + K^{22}
    \end{pmatrix}
    }_{=\,A_N}
    \begin{pmatrix} \rho_{N_1}^{\mathrm{param}} \\[4pt]
                    \rho_{N_2}^{\mathrm{param}} \end{pmatrix}
    \;=\;
    \begin{pmatrix} g_{N_1} \\[4pt] g_{N_2} \end{pmatrix},
\end{equation}
with $W = \mathrm{diag}(W_1, W_2)$,
$W_\alpha := \mathrm{diag}(w_1^\alpha, \ldots, w_{N_\alpha}^\alpha)$,
giving the weighted inner product on $\C^N$, $N = N_1 + N_2$. The
diagonal blocks $K^{\alpha\alpha}$ contain the on-chart singular
interactions and are evaluated with the singularity-aware quadrature;
the off-diagonal blocks $K^{\alpha\beta}$, $\alpha \neq \beta$, are
smooth in $(\xi_i^\alpha, \xi_j^\beta)$, so any standard product rule
on $D_\alpha \times D_\beta$ suffices. The solver drives
\eqref{eq:two-chart-system} through residual minimization on the
concatenated parameters $\theta = \bigl(\theta(1), \theta(2)\bigr)$,
with $\rhoN$ in \eqref{eq:res-theta} replaced by the block vector
$(\rho_{N_1}^{\mathrm{param}}, \rho_{N_2}^{\mathrm{param}})^{\!\top}$.

\subsection{IBIM volumetric representation and tubular rule}\label{app:surf-disc:ibim}
\label{app:ibim}

When $\Gam \subset \R^d$ is a closed $C^2$ surface given
non-parametrically as the zero level set of a signed distance function
$d_\Gam$, we adopt the implicit boundary integral method (IBIM) of
\cite{KublikTanushevTsai2013, ChenTsai2017}. The density on $\Gam$ is
the trace of a single ambient MLP,
\begin{equation}\label{eq:rho-ibim}
    \rho_\theta^{\mathrm{IBIM}} \,:\, \R^d \,\longrightarrow\, \C,
    \qquad
    \rho(x) \;=\; \rho_\theta^{\mathrm{IBIM}}(x), \quad x \in \Gam,
\end{equation}
optimized by the solver of Section~\ref{sec:Algs}.

Let
$T_\epsilon := \{x \in \R^d : |d_\Gam(x)| < \epsilon\}$, with
$\epsilon$ smaller than the reach of the closed $C^2$ surface $\Gam$, so
that the closest-point projection
$P_\Gam(z) := z - d_\Gam(z)\,\nabla d_\Gam(z)$ is well defined on
$T_\epsilon$.  For every integrable $f$ on $\Gam$, the IBIM formulation uses
the exact tubular-neighborhood identity
\begin{equation}\label{eq:ibim-volume-identity}
    \int_\Gam f\,dS
    \;=\;
    \int_{T_\epsilon}
        f\bigl(P_\Gam(z)\bigr)\,
        \delta_\epsilon\bigl(d_\Gam(z)\bigr)\,J(z)\,dz.
\end{equation}
Applying the Cartesian trapezoidal rule with grid spacing $h$ gives the
IBIM quadrature
\begin{equation}\label{eq:ibim-quad}
    \int_\Gam f\,dS
    \;\approx\;
    \sum_{z_j \in T_\epsilon^h}
        h^d\,\delta_\epsilon\bigl(d_\Gam(z_j)\bigr)\,J(z_j)\,
        f\bigl(P_\Gam(z_j)\bigr),
\end{equation}
where $T_\epsilon^h$ is the set of Cartesian grid nodes inside
$T_\epsilon$, and
$\delta_\epsilon(r) := \frac{1}{\epsilon}\,\eta(r/\epsilon)$
is a regularized delta formed from a fixed mollifier $\eta \in C^1_c([-1,1])$
with $\int_{-1}^{1}\eta = 1$ --- for instance the raised cosine
$\eta(s) = \tfrac12(1+\cos\pi s)$, giving
$\delta_\epsilon(r) = \tfrac{1}{2\epsilon}\bigl(1+\cos(\pi r/\epsilon)\bigr)$
on $[-\epsilon,\epsilon]$ --- and
{$J(z) := \prod_{i=1}^{d-1}\bigl(1-d_\Gam(z)\,\kappa_i(z)\bigr)$,
where $\kappa_i(z)$ are the principal curvatures of the signed-distance
level set through the tubular node $z$ (equivalently, the nonzero eigenvalues
of $D^2d_\Gam(z)$ with the stated orientation).  This is the Jacobian of the
closest-point map used by the IBIM rule; in particular, the curvatures in
this formula are evaluated at $z$, not at $P_\Gam(z)$}~\cite{KublikTanushevTsai2013}.
Thus the volumetric integral in \eqref{eq:ibim-volume-identity} is
identical to the surface integral; the numerical approximation enters only
when that volume integral is discretized on the Cartesian grid.  With
$\epsilon \gtrsim h$, the trapezoidal rule \eqref{eq:ibim-quad} converges
to this common value as $h\to0$, at an order set by the smoothness and
moments of $\eta$.  The IBIM node set and weights are therefore
\begin{equation}\label{eq:ibim-nodes}
    \GamN \,=\, \bigl\{ x_j = P_\Gam(z_j) \,:\, z_j \in T_\epsilon^h \bigr\},
    \qquad
    w_j \,:=\, h^d\,\delta_\epsilon(d_\Gam(z_j))\,J(z_j),
\end{equation}
and the IBIM-discretized BIE operator
$(\mathcal{K}\rho)(x) = \int_\Gam k(x,y)\,\rho(y)\,dS(y)$ at quadrature
nodes $x_i = P_\Gam(z_i)$ is
\begin{equation}\label{eq:ibim-K}
    (K_N\rho)_i
    \;=\;
    \sum_{z_j \in T_\epsilon^h} w_j\,
        k\bigl(P_\Gam(z_i),\,P_\Gam(z_j)\bigr)\,
        \rho_\theta^{\mathrm{IBIM}}\bigl(P_\Gam(z_j)\bigr),
\end{equation}
with near-diagonal entries (those for which
$\|P_\Gam(z_i) - P_\Gam(z_j)\|$ is comparable to $h$) replaced by
curvature-dependent constants following
\cite{KublikTanushevTsai2013, ChenTsai2017}. High-order corrected
trapezoidal IBIM rules are developed in \cite{IzzoRunborgTsai2022}; the
experiments of Section~\ref{sec:Examples} use the basic rule
\eqref{eq:ibim-quad}.

\paragraph{Regularization of kernel singularities.}

In the volumetric examples, the layer kernels are singular, undefined, or
direction-dependent on the diagonal.  We replace the unresolved
near-diagonal contribution by a local constant determined from the surface
geometry at the target point.

\emph{Conventions.}
\(\Gamma\subset\mathbb R^{d}\) is oriented by the unit normal
\(n=\nabla d_\Gamma\), pointing out of \(\Omega\); the principal curvatures
\(\kappa_1,\dots,\kappa_{d-1}\) and the shape operator \(S_x\) are taken with
respect to \(n\), so that the sphere of radius \(a\) carries
\(\kappa_i=1/a\). They are computed from second derivatives of the signed
distance function. We write
\[
  H:=\frac{1}{d-1}\sum_{i=1}^{d-1}\kappa_i,
  \qquad
  \mathcal H:=\sum_{i=1}^{d-1}\kappa_i=(d-1)H
\]
for the averaged and the additive mean curvature. 

For \(d=2,3\) the regularizing constants are obtained by averaging the kernel
over an osculating surface above the tangent disc
\[
  U_T(x,r_0)
  =
  \left\{
    y\in\Gamma:
    \left|P_{T_x\Gamma}(y-x)\right|\le r_0
  \right\}.
\]
For \(d=4\) the quantity being regularized is instead the polar-weighted
integrand in tangent-space polar coordinates.

\emph{(i) Laplace in \(\mathbb R^2\) and \(\mathbb R^3\): double-layer kernel
\[
  k_{\mathrm{Lap}}(x,y)
  =
  -\,\frac{\partial G_{0,d}}{\partial n_y}(x,y).
\]}
In \(\mathbb R^2\), the double-layer kernel has the finite diagonal limit
\[
  k_{\mathrm{Lap}}(x,x)
  =
  \frac{\kappa(x)}{4\pi}
  =
  \frac{H(x)}{4\pi},
\]
under the curvature and normal conventions fixed above, and hence no
\(r_0\)-dependent regularization is required.

In \(\mathbb R^3\), averaging the kernel over the osculating paraboloid above
\(U_T(x,r_0)\) gives \cite[eq.~(29)]{KublikTanushevTsai2013}
\begin{equation}\label{eq:reg-laplace}
  \widetilde k_{\mathrm{Lap}}(x)
  =
  H\left[
    \frac{1}{4\pi r_0}
    -
    \frac{5\bigl(3H^2-\kappa_1\kappa_2\bigr)}{192\pi}\,r_0.
  \right]
\end{equation}

\emph{(ii) Helmholtz in \(\mathbb R^3\), exterior Neumann: adjoint
double-layer kernel
\[
  k_{\mathrm{Helm}}(x,y)
  =
  -\,\frac{\partial G_{\kappa,3}}{\partial n_x}(x,y),
\]
with \(n_x\) the exterior unit normal.}
Re-expanding the Helmholtz kernel over the same osculating paraboloid and
tangent-disc patch used in \eqref{eq:reg-laplace} gives
\begin{equation}\label{eq:reg-helmholtz}
  \widetilde k_{\mathrm{Helm}}(x)
  =
  H\left[
    \frac{1}{4\pi r_0}
    -
    \frac{5\bigl(3H^2-\kappa_1\kappa_2\bigr)}{192\pi}\,r_0
    +
    \frac{\kappa^2}{24\pi}\,r_0
  \right].
\end{equation}

Equation \eqref{eq:reg-helmholtz} is the re-derived tangent-disc formula used
here in place of the coefficients printed in
\cite[eq.~(2.8)]{ChenTsai2017}.

\emph{(iii) Laplace in \(\mathbb R^4\), exterior Robin (black ring):
combined kernel
\[
  k_{\mathrm{Lap},\mathbb R^4}(x,y)
  =
  -\left(
    \beta(y)\,G_{0,4}(x,y)
    +
    \frac{\partial G_{0,4}}{\partial n_y}(x,y)
  \right),
  \qquad
  \beta=\frac{\mathcal H}{3}.
\]}
Let \(x\in\Gamma\subset\mathbb R^4\), where \(\Gamma\) is a smooth
three-dimensional hypersurface. In tangent-space polar coordinates
\(y=y(t,\omega)\), \(t>0\), \(\omega\in\mathbb S^2\), the surface measure has
the local form
\(
  dS_y=t^2\bigl(1+\mathcal O(t)\bigr)\,dt\,d\omega .
\)
Since \(G_{0,4}(x,y)=\bigl(4\pi^2|x-y|^2\bigr)^{-1}\), the factor \(t^2\) in
the polar measure cancels the \(|x-y|^{-2}\) singularity, and the single-layer
term has the direction-independent limit
\begin{equation}\label{eq:reg-ring-sl}
  \lim_{t\to0} t^2\,\bigl(-\beta(y)G_{0,4}(x,y(t,\omega))\bigr)
  =
  -\frac{\beta(x)}{4\pi^2}.
\end{equation}
For the normal-derivative term the corresponding directional limit is
\begin{equation}\label{eq:reg-ring-dl-directional}
  \lim_{t\to0}
  t^2\left(-\frac{\partial G_{0,4}}{\partial n_y}\right)
  \bigl(x,y(t,\omega)\bigr)
  =
  \frac{\left\langle S_x\omega,\omega\right\rangle}{4\pi^2},
\end{equation}
where \(S_x\) is the shape operator at \(x\); the same limit is obtained with
\(n_y\) replaced by \(n_x\), again because
\((y-x)\cdot n_y=(x-y)\cdot n_x+\mathcal O(t^3)\). Therefore the directional
polar-diagonal limit of the combined Robin kernel is
\begin{equation}\label{eq:reg-ring-combined-directional}
  \lim_{t\to0}
  t^2\,k_{\mathrm{Lap},\mathbb R^4}\bigl(x,y(t,\omega)\bigr)
  =
  \frac{1}{4\pi^2}
  \Bigl[
    \left\langle S_x\omega,\omega\right\rangle
    -
    \beta(x)
  \Bigr].
\end{equation}
The angular average of the quadratic form of the shape operator satisfies
\begin{equation}\label{eq:shape-average-r4}
  \frac{1}{|\mathbb S^2|}
  \int_{\mathbb S^2}
  \left\langle S_x\omega,\omega\right\rangle\,d\omega
  =
  \frac{\operatorname{tr}S_x}{3}
  =
  \frac{\kappa_1+\kappa_2+\kappa_3}{3}
  =
  \frac{\mathcal H(x)}{3}.
\end{equation}
With \(\beta=\mathcal H/3\) the two averaged contributions cancel,
\[
  \widetilde k_{\mathrm{Lap},\mathbb R^4}(x)
  =
  0.
\] 

{\color{black}{
\section{Details for the ring-shaped horizon example}\label{app:ring}

Throughout this appendix, $\Omega\subset\R^4$ is the solid ring of
Section~\ref{sec:Examples:blackring}, $D=\R^4\setminus\overline\Omega$
is its exterior, and $\mathbf n$ denotes the unit normal on $\Gam$
pointing into $D$.

\paragraph{From the Hamiltonian constraint to the Robin condition.}
For a time-symmetric slice of a five-dimensional vacuum spacetime, the
momentum constraint holds trivially, and the Hamiltonian constraint
is the vanishing of the scalar curvature of the spatial metric. In
spatial dimension four, the conformal ansatz $g = \psi^2\delta$ gives
$R[\psi^2\delta] = -6\,\psi^{-3}\Delta\psi$, so the constraint is the
flat Laplace equation, and asymptotic flatness fixes $\psi \to 1$ at
infinity. A marginally trapped surface of such a slice is a minimal
surface of $g$; since the additive mean curvature of a
three-dimensional hypersurface transforms under
$\hat g = e^{2\phi} g$ as
$\widehat{\mathcal H}=e^{-\phi}\bigl(\mathcal H+3\,\partial_\nu\phi\bigr)$,
the condition $\widehat{\mathcal H}=0$ with $\phi=\log\psi$ is exactly
the Robin condition of Problem~3 with $\beta=\mathcal H/3=H$. On the sphere $|x| = a$
the exterior problem with constant coefficient $\beta$ has the exact
solution
\begin{equation}\label{eq:ring-sphere-exact}
    \psi(x) \;=\; 1 + \frac{c}{|x|^{2}},
    \qquad
    c \;=\; \frac{\beta a^{3}}{2 - \beta a},
    \qquad \beta a \neq 2,
\end{equation}
and the horizon value $\beta = 1/a$ gives $c = a^2$, the
isotropic-coordinate form of the four-dimensional
Schwarzschild--Tangherlini initial data. The solution
\eqref{eq:ring-sphere-exact} provides an exact reference for
validating the four-dimensional pipeline independently of the ring
geometry.

\paragraph{Closed-form geometry.}
For the surface \eqref{eq:ring-sdf}, every quantity entering the
tubular rule \eqref{eq:ibim-quad} is explicit. Off the plane
$\varrho = 0$, which the tube never meets when $\epsilon < R - r$,
the point of the circle $C$ nearest to $z$ is
$c(z) = (R/\varrho(z))\,(z_1, z_2, 0, 0)$, the closest-point
projection is $P_\Gam(z) = c(z) + r\,(z - c(z))/|z - c(z)|$, and
$\nabla d_\Gam(z) = (z - c(z))/|z - c(z)|$ exactly. The principal
curvatures with respect to $\mathbf{n} = \nabla d_\Gam$ and the
resulting Robin coefficients are
\begin{equation}\label{eq:ring-K-closed}
    \kappa_1 = \kappa_2 = \frac{1}{r}, \qquad
    \kappa_3(y) = \frac{\varrho(y) - R}{r\,\varrho(y)}, \qquad
    \beta(y) \;=\; \frac{\mathcal H(y)}{3} \;=\; H(y) \;=\;
    \frac{1}{3r}\left(3 - \frac{R}{\varrho(y)}\right).
\end{equation}
{In the tubular Jacobian of \eqref{eq:ibim-quad}, the
principal curvatures are evaluated at the Cartesian tube node $z$, as in
Appendix~\ref{app:surf-disc:ibim}, rather than at $P_\Gam(z)$.  Thus
\[
  J(z)=\prod_{i=1}^{3}\bigl(1-d_\Gam(z)\,\kappa_i(z)\bigr).
\]
For a sphere of radius $a$, $\kappa_i(z)=1/(a+d_\Gam(z))$ on the offset
level set, giving $J(z)=(a/(a+d_\Gam(z)))^3$, the expected closest-point
Jacobian.}  Exact surface coordinates are available for visualization and
for uniform sampling:
\begin{equation}\label{eq:ring-param}
    x(\varphi, \nu)
    \;=\;
    \bigl((R + r\nu_1)\cos\varphi,\ (R + r\nu_1)\sin\varphi,\
    r\nu_2,\ r\nu_3\bigr),
    \qquad \varphi \in [0, 2\pi),\ \nu \in S^2,
\end{equation}
with $dS = r^2 (R + r\nu_1)\,d\sigma(\nu)\,d\varphi$ and
$|\Gam| = 8\pi^2 R\,r^2$ exactly; the latter serves as a unit test
for the tube rule.

}} 
\bibliographystyle{abbrv}
\bibliography{reference}
\end{document}